\documentclass[reqno,11pt,psamsfonts]{amsart}

\usepackage[all]{xy}
\usepackage{amsthm}
\usepackage{amssymb}
\usepackage{amsmath,amscd}
\usepackage{mathrsfs}
\usepackage{color}
\usepackage{enumerate}
\usepackage{multicol}
\usepackage{longtable}
%%%%%%%%%%%%%%%%%%%%%%%%%%%
% \documentclass[reqno,11pt]{amsart}
% \documentclass[reqno,psamsfonts]{amsbook}
% \usepackage[all]{xy}
% \usepackage{amsthm, amssymb, amsmath, amscd}
% \usepackage{braket}
%\usepackage{showkeys}
%\usepackage[dvipdfmx]{hyperref}
 \usepackage{geometry}
 \geometry{top=2.6truecm,bottom=2.5truecm,left=2.5truecm,right=2.5truecm}
%\geometry{top=30truemm,bottom=30truemm,left=20truemm,right=20truemm}
%%%%%%%%%%%%%%%%%%%%%%%%%%%%%%%%%%%
\usepackage[dvipdfmx]{graphicx}
\usepackage[all]{xy}
\usepackage{tikz}
\usepackage{qcircuit}
%%%%%%%%%%%%%%%%%%%%%%%%%%%%%%%%%%%%
%\usepackage[dvips]{graphicx, color}
%\def\tcr{\color{red}}
%\def\tcb{\color{blue}}
%\def\tcm{\color{magenta}}
%\def\tcg{\color{green}}
%%%%%%%%%%%%%%%%%%%%%%%%%%%%%%%%%%%%

\def\a{\alpha}
\def\b{\beta}

\def\d{\delta}

\def\g{\gamma}
\def\l{\lambda}
\def\om{\omega}

%%%Bbb

\def\NN{{\mathbb N}}
\def\PP{{\mathbb P}}

\def\RR{{\mathbb R}}

\def\ZZ{{\mathbb Z}}

%%%bar

%%%cal
\def\cal{\mathcal}

\def\cA{{\cal A}}

\def\cD{{\cal D}}

\def\cP{{\cal P}}

%%%frak

%%%bf

\def\bg{{\bf g}}

\def\bx{{\bf x}}

\def\bM{{\bf M}}

%%%sf

\def\Aut{\operatorname{Aut}}

\def\det{\operatorname{det}}
\def\dim{\operatorname{dim}}

\def\Ext{\operatorname {Ext}}

\def\GL{\operatorname {GL}}

\def\GKdim{\operatorname{GKdim}}
\def\GL{\operatorname{GL}}
\def\gldim{\operatorname{gldim}}

\def\GrMod{\operatorname{GrMod}}

\def\GrAut{\operatorname{GrAut}}

\def\id{\operatorname {id}}

\def\mod{\operatorname{mod}}

\def\PGL{\operatorname{PGL}}

\def\<{\langle}
\def\>{\rangle}

%%%%%%%%%%%%%%%%%%%%%%%%%%%%%%%%%%%%%%%%

%%%%%%%%%%%%%%%%%%%%%%%%%%%%%%%%%%%%%%%%%%%%%%%
\def\klang{k \langle x_1,\dots,x_n \rangle}
\def\kxy{k \langle x,y \rangle}
\def\kxyz{k \langle x,y,z \rangle}

\def\xx{x_1 \otimes x_2}
\def\XY{x_1 \otimes y_2}
\def\yx{y_1 \otimes x_2}
\def\yy{y_1 \otimes y_2}
%%%%%%%%%%%%%%%%%%%%%%%%%%%%%%%%%%%%%%%%%%%%%%

%----------------------------------------------------------------
\def\rnum#1{\expandafter{\romannumeral #1}}
\def\Rnum#1{\uppercase\expandafter{\romannumeral #1}}

\theoremstyle{plain} 
\newtheorem{theorem}{Theorem}[section]

\newtheorem{lemma}[theorem]{Lemma}
\newtheorem{proposition}[theorem]{Proposition}
\newtheorem*{Mthm}{Main~Theorem}

\theoremstyle{definition}
\newtheorem{definition}[theorem]{Definition}

\theoremstyle{remark}

\newtheorem{remark}[theorem]{Remark}

\numberwithin{equation}{section}

%--------------------------------------------------------------

\begin{document}
\pagenumbering{arabic}
	
\title[Classifications of $3$-dim. cubic AS-reg. alg. whose point schemes are not integral]
{Classifications of $3$-dimensional cubic AS-regular algebras whose point schemes are not integral}
	
\author{Ayako Itaba, Masaki Matsuno and Yu Saito}

%%%%%%%%%%1st Author%%
\address{Institute of Arts and Sciences,
Tokyo University of Science, 6-3-1 Niijuku, Katsushika-ku, Tokyo 125-8585, JAPAN}
\email{itaba@rs.tus.ac.jp}
%%%%%%%%%%%%
%%%%%%%%%%2nd Author%%
\address{School of General and Management Studies, 
Suwa University of Science 5000-1, Toyohira, Chino, Nagano 391-0292, JAPAN}
\email{matsuno\_masaki@rs.sus.ac.jp}
%%%%%%%%%%%%
%%%%%%%%%%3rd Author%%
\address{Graduate School of Science and Technology, Shizuoka University, 836 Ohya, Suruga-ku, Shizuoka-shi, Shizuoka 422-8529, JAPAN}
\email{saito.yu.18@shizuoka.ac.jp}
%%%%%%%%%%%%

\keywords{Graded algebras, graded Morita equivalences, AS-regular algebras, geometric algebras, point schemes}
	
\thanks{{\it 2020 Mathematics Subject Classification}: 14A22, 16W50, 16S38, 16D90, 16E65.}
	
%\noindent \today
	
\maketitle

\begin{abstract}
%\textcolor{blue}
%{
%%In noncommutative algebraic geometry, 
%Artin--Tate--Van den Bergh showed that a $3$-dimensional cubic AS-regular algebra $A$ is {\it geometric}. 
%%By Artin--Tate--Van den Bergh, a $3$-dimensional cubic AS-regular algebra $A$ is {\it geometric}. 
%So, we can write $A=\mathcal{A}(E,\sigma)$
%where $E$ is $\mathbb{P}^{1}\times \mathbb{P}^{1}$ or curves of bidegree (2,2) in $\mathbb{P}^{1}\times \mathbb{P}^{1}$, and $\sigma\in \mathrm{Aut}_{k}E$. 
%In this paper, for each case that $E$ is either (1) a conic and two lines in a triangle, (2) a conic and two lines intersecting in one point, or (3) quadrangle, we give the complete list of defining relations of $A$ and classify them up to graded algebra isomorphism and graded Morita equivalence. 
%%in terms of their defining relations.
%By the results of the second and third authors and our result in this paper, 
%By combining the results of the second and third authors with our results, 
%we give a classification of $3$-dimensional cubic AS-regular algebra whose point schemes are not integral. 
By the result of Artin--Tate--Van den Bergh, 
every $3$-dimensional cubic AS-regular algebra A can be expressed as a geometric algebra $A=\mathcal{A}(E,\sigma)$, 
where $E$ is either $\mathbb{P}^{1}\times \mathbb{P}^{1}$ or a curve of bidegree ($2$,$2$) in $\mathbb{P}^{1}\times \mathbb{P}^{1}$ and $\sigma\in \mathrm{Aut}_{k}E$. 
In particular, we treat the following three configurations: 
(1) a conic and two lines in a triangle, 
(2) a conic and two lines intersecting in one point, and 
(3) a quadrangle. 
For each of these cases, we (i) list all defining relations of the corresponding algebras $\mathcal{A}(E,\sigma)$, and (ii) classify them up to graded algebra isomorphism and graded Morita equivalence. 
Furthermore, we  present explicit (twisted) superpotentials whose derivation-quotient algebras realize these algebras
%examples 
and verify that the resulting algebras are AS-regular. 
Combining our results with existing classifications for the remaining types (including Types P, S, T, WL, and TWL), 
we thereby complete the classification of 3-dimensional cubic AS-regular algebras whose point schemes are not integral. 
%}
\end{abstract}

%\begin{keyword}
%Graded algebras, graded Morita equivalence, AS-regular algebras, geometric algebras, point schemes. 
%\end{keyword}
	
%\thanks{{\it 2020 Mathematics Subject Classification}: 14A22, 16W50, 16S38, 16D90, 16E65.}

%\end{frontmatter}
	
%\noindent \today

%%%%%%%%%%%%%%%%%%%%%%%%%%%%%%%%%%%%%%%%%%%%%%%%%%%%%%%%%%%%%%%%%%%%%%%%%%%%%%%%%%%%%%%%%%%%%%%%%%%%%%%%%%%%%%
\section{Introduction}
Throughout this paper, we fix an algebraically closed field $k$ of characteristic $0$, 
and the $(n-1)$-dimensional projective space over $k$ is denoted by $\mathbb{P}_{k}^{n-1}=\mathbb{P}^{n-1}$. 
%In noncommutative algebraic geometry, 
The definition  of an Artin-Schelter regular (shortly, AS-regular) algebra was introduced by Artin--Schelter \cite{AS}. 
Note that an AS-regular algebra is a noncommutative analogue of a commutative polynomial ring.  
%Also, Artin--Schelter  \cite{AS} proved that every $3$-dimensional AS-regular algebra finitely generated in degree $1$ has either
%$3$ generators and $3$ quadratic defining relations (the quadratic case), or $2$ generators and $2$ cubic defining relations (the cubic case).
Also, Artin--Schelter \cite[Theorem 1.5]{AS} proved that every $3$-dimensional AS-regular algebra finitely generated in degree $1$ over $k$ is isomorphic to
one of the following forms:
	$k \langle x,y,z \rangle/(f_1,f_2,f_3)$
	where $f_i$ are homogeneous elements of degree $2$ (the {\it quadratic} case), or
	$k \langle x,y \rangle/(g_1,g_2)$
where $g_j$ are homogeneous elements of degree $3$ (the {\it cubic} case).
Artin-Tate-Van den Bergh \cite{ATV1} proved that every $3$-dimensional AS-regular algebra finitely generated in degree $1$
determines and is determined by the pair $(E,\sigma)$ where $E$ is a scheme and $\sigma$ is an automorphism of $E$.
Moreover, if $A$ is the quadratic case, then $E$ is either $\mathbb{P}^2$ or a cubic curve in $\mathbb{P}^2$, and if it is the cubic case, then $E$ is either $\mathbb{P}^1 \times \mathbb{P}^1$ or a curve of bidegree $(2,2)$ in $\mathbb{P}^1 \times \mathbb{P}^1$.
%Their work told us that algebraic geometry is a useful tool to study even noncommutative graded algebras. 

In Noncommutative algebraic geometry, 
classifications of AS-regular algebras are one of the most important projects. 
%\textcolor{blue}
%{
%Note that, 
Mori--Smith \cite{MoS2} gave the classification of all superpotentials whose Jacobian algebras are 3-dimensional noetherian quadratic Calab--Yau algebras. 
%and  
Mori--Ueyama \cite{MU2} gave the classification of all superpotentials whose Jacobian algebras are 3-dimensional noetherian cubic Calab--Yau algebras. 
%}
Mori \cite{M} introduced {\it a geometric algebra} for a quadratic algebra. 
Note that every $3$-dimensional quadratic AS-regular algebra $A$ is a geometric algebra for a quadratic algebra. 
From the point of view of a geometric algebra for a quadratic algebra, 
the first and second authors gave the complete list of defining relations of all $3$-dimensional quadratic AS-regular algebras and classify them up to graded algebra isomorphism and graded Morita equivalence (see \cite{IM1, IM2, Ma}). 
Recently, the second and third authors \cite{MaS} defined 
{\it a geometric algebra} for a cubic algebra 
by extending the notion of a geometric algebra for a quadratic algebra 
(see Definition \ref{def.GA} (\cite[Definition 3.3]{MaS})). 
Also, they gave the list of defining relations of some classes of $3$-dimensional cubic AS-regular algebras (called Type P, S, T) 
and classified them up to graded algebra isomorphism and graded Morita equivalence
(see Remark \ref{Rem-MaS}). 
As a continuation of these studies, 
we focus on $3$-dimensional {\it cubic} AS-regular algebras in this paper. 

Let $A=k \langle x_1, \dots, x_n \rangle/(f_1, \dots, f_m)$ be a cubic algebra
where $\deg x_i=1$ $(i=1,\dots,n)$ and $f_j$ is a homogeneous element of degree $3$ $(j=1, \dots, m)$.
%For a cubic algebra $A$, $\Gamma_A$ is defined as follows:
%\begin{center}
%	$\Gamma_A:=\{
%	(p,q,r) \in (\mathbb{P}^{n-1})^{\times 3} \mid f_j(p,q,r)=0,\,\,\, j=1, \dots, m 
%	\}$.
%\end{center}
%A pair $(E,\sigma)$ is called {\it geometric} if $E \subset (\mathbb{P}^{n-1})^{\times 2}$ is a projective variety and $\sigma$ is an automorphism of $E$ satisfying $\pi_1\sigma=\pi_2$, 
%where $\pi_i: \mathbb{P}^{n-1} \times \mathbb{P}^{n-1} \to \mathbb{P}^{n-1}$ is the $i$-th projection $(i=1,2)$.
%%%%%%%%%%
%\begin{definition}[{\cite{MaS}, cf.\cite{M}}]
%\label{cubicgeom}
%	A cubic algebra $A=k \langle x_1, \dots, x_n \rangle/(f_1, \dots, f_m)$ is called {\it geometric} if there exists a geometric pair $(E,\sigma)$ such that
%	\begin{align*}
%		&{\rm (G1)}\quad \Gamma_A=\{(p,q,(\pi_2\sigma)(p,q)) \in (\mathbb{P}^{n-1})^{\times 3} \mid (p,q) \in E\}, \\
%		&{\rm (G2)}\quad (f_1, \dots, f_m)_3=\{f \in k \langle x_1, \dots, x_n \rangle_3 \mid f(p,q,(\pi_2\sigma)(p,q))=0, \,\,\,\forall (p,q) \in E\}.
%	\end{align*}
%	In this case, we write $A=\mathcal{A}(E,\sigma)$ and $E$ is called the {\it point scheme} of $A$.
%\end{definition}
%%%%%%%%%%%%%
Note that every $3$-dimensional cubic AS-regular algebra $A$ is a geometric algebra for a cubic algebra. 
Moreover, the point scheme $E$ of $A$ is $\mathbb{P}^1 \times \mathbb{P}^1$ or a curve of bidegree $(2,2)$ in $\mathbb{P}^1 \times \mathbb{P}^1$. 
%The following theorem is our main result of this paper. 
Our main result is as follows: 
%\vspace{-3pt}
%%%%%%%%
\begin{Mthm}[{Theorems \ref{thm.isom}, \ref{thm.grmod}}]
\label{thm}
Let $A=\mathcal{A}(E,\sigma)$ be a $3$-dimensional cubic AS-regular algebra. 
Assume that $E$ is either {\rm (1)} a conic and two lines in a triangle,
{\rm (2)} a conic and two lines intersecting in one point, or {\rm (3)} quadrangle. 
For each case, we give the list of defining relations of $A$ and
classify them up to graded algebra isomorphism and graded Morita equivalence in terms of their defining relations.
\end{Mthm}
%%%%%%%%

%\vspace{-3pt}

%\textcolor{blue}{
Furthermore, we  present explicit (twisted) superpotentials whose derivation-quotient algebras realize these algebras and verify that the resulting algebras are AS-regular (Proposition \ref{prop.P}). 
%}
%By the results of \cite{MaS} and Main Theorem, 
%By combining the results of the second and third authors with our results, 
%we give a classification of $3$-dimensional cubic AS-regular algebras whose point schemes are not integral. 
%\textcolor{blue}
%{
Combining our results with existing classifications for the remaining types (including Types P, S, T, WL, and TWL), 
we thereby complete the classification of 3-dimensional cubic AS-regular algebras whose point schemes are not integral. 
For Types P, S and T, we refer the result 
by the second and third authors \cite{MaS} (see Remark \ref{Rem-MaS}). 
%For the case that $E$ is non-reduced, 
For Types  WL and TWL (when $E$ is not reduced), we refer the result of 
Artin--Tate--Van den Bergh \cite{ATV2} (see Subsection \ref{subsec.NR}). 
%}
%Moreover, we classify them up to graded algebra isomorphism and graded Morita equivalence in terms of their defining relations (see Subection \ref{subsec.SUM}).

This paper is organized as follows: In Section \ref{sec.Pre}, we recall the definitions of an Artin--Schelter regular algebra from \cite{AS}, 
a twisted superpotential and its derivation-quotient algebra from \cite{BSW} and \cite{MoS1} (see Subsection \ref{subsec.AS}), 
and a geometric pair and a geometric algebra from \cite{MaS} (see Subsection \ref{subsec.GA}).
In particular, we describe the classification of $3$-dimensional cubic AS-regular algebras of Type WL and TWL (see Subsection \ref{subsec.NR}). 
In Section \ref{sec.Class}, we describe an approach to prove our results in this paper. 
At first, we study geometric pairs corresponding to $3$-dimensional cubic AS-regular algebras of Type S$'$, T$'$ and FL (Lemmas \ref{lem.step0} and \ref{lem.step1}). 
Next, we give a list of defining relations of them (Theorem \ref{thm.DR}). 
At the end of the section, we check AS-regularity of them (Proposition \ref{prop.P}).
%Finally, 
In Section \ref{sec.Cond}, we classify them up to graded algebra isomorphism
(Theorem \ref{thm.isom})
%(Subsection \ref{subsec.ISOM}) 
and graded Morita equivalence (Theorem \ref{thm.grmod}). 
%(Subsesction \ref{subsec.ME}).
%By the results of \cite{MaS} and Main Theorem, 
%we will give the complete list of defining relations of $3$-dimensional cubic AS-regular algebras whose point schemes are not integral.
%Moreover, we classify them up to graded algebra isomorphism and graded Morita equivalence in terms of their defining relations (see Subsection \ref{subsec.SUM}).
%%we give a complete list of defining relations of $3$-dimensional cubic AS-regular algebras whose point schemes are not integral and the conditions to be graded algebra isomorphic and graded Morita equivalent in terms of their defining relations (Subsection \ref{subsec.SUM}).
Finally, 
%reducible, 
we summarize the complete classifications for the cases that point schemes are not integral as the tables in Subsection \ref{subsec.SUM}.

%%%%%%%%%%%%%%%%%%%%%%%
%%%%%%%%%%%%%%%%%%%%%%%
\section{Preliminaries}\label{sec.Pre}
Throughout this paper, all vector spaces and algebras are defined over $k$.
Assume that a graded algebra is an $\NN$-graded algebra $A=\bigoplus_{i \in \NN} A_i$. 
%over $k$.
A graded algebra $A$ is called {\it connected} if $A_0=k$.
The category of graded right $A$-modules is denoted by $\GrMod A$. 
A morphisms in $\GrMod A$ is right $A$-module homomorphisms preserving a degree.
%Two 
Graded algebras $A$ and $B$ are called {\it graded Morita equivalent} 
if the categories $\GrMod A$ and $\GrMod B$ are equivalent, 
denoted by $\GrMod A\cong \GrMod B$. 
%%%%%%%
\subsection{AS-regular algebras and twisted superpotentials. }\label{subsec.AS}
%Let $A$ be a connected graded algebra. 
%First, we recall the definition of an AS-regular algebra. 
%Recall that
%	$\GKdim A:=\inf \left\{
%	\a \in \RR \mid \dim_k \left(\sum_{i=0}^{n} A_i\right) \leq n^{\a} \,\,\,\textnormal{for all}\,\,\, n \gg 0
%	\right\}$
%is called the {\it Gelfand-Kirillov dimension} of $A$.
%In noncommutative algebraic geometry, an AS-regular algebra is one of the most important classes of algebras to study.
\begin{definition}[{\cite{AS}}]
	A connected graded algebra $A$ is called a {\it $d$-dimensional Artin--Schelter regular 
	{\rm (}%shortly, 
	AS-regular{\rm )} algebra} if $A$ satisfies 
%it satisfies
%	the following conditions:
	\begin{enumerate}[{\rm (i)}]
		\item $\gldim A=d < \infty$; 
%\vspace{-5pt}
		\item $\GKdim A:=\inf \displaystyle\left\{
	\a \in \RR \mid \dim_k \left(\sum_{i=0}^{n} A_i\right) \leq n^{\a} \,\,\,\textnormal{for all}\,\,\, n \gg 0
	\right\} < \infty$ ($\GKdim$ is called {\it the Gelfand--Kirillov dimension of $A$}); 
%\vspace{-5pt}
		\item $\Ext_{A}^{i}(k,A) \cong
		\begin{cases}
			k \quad \textnormal{if}\,\,\, i=d, \\
			0 \quad \textnormal{if}\,\,\, i \neq d, 
		\end{cases}$
		({\it Gorenstein conditions}). 
%\vspace{-3pt}
	\end{enumerate}
\end{definition}
%\vspace{-5pt}
%%%%
%Next, we now recall from \cite{BSW} and \cite{MoS1} 
%the definitions
%of superpotentials, twisted superpotentials and derivation-quotient algebras.
Let $V$ be a finite dimensional vector space.
For an integer $m\geq 2$, the linear map $\varphi:V^{\otimes m} \to V^{\otimes m}$ is defined by
$\varphi(v_1 \otimes \dots \otimes v_{m-1} \otimes v_m):=v_m \otimes v_1 \otimes \dots \otimes v_{m-1}$.
The general linear group of $V$ is denoted by $\GL(V)$.
\begin{definition}[{\rm \cite[Introduction]{BSW}, \cite[Definition 2.5]{MoS1}}]
	%For an integer $m\geq 2$, 
	Let $\om \in V^{\otimes m}$.
	%\vspace{-5pt}
	\begin{enumerate}[{\rm (1)}]
		\item If $\varphi(\om)=\om$, then $\om$ is called a {\it superpotential}. 
		%\vspace{-5pt}
		\item If there exists $\theta \in \GL(V)$ such that
		$(\theta \otimes \id^{\otimes m-1})(\varphi(\om))=\om$, then $\om$ is called a {\it twisted superpotential}. 
%		\item Let $\theta \in \GL_2(k)$. We write
%		\begin{center}
%			$\om^{\theta}:=(\theta^{3} \otimes \theta^2 \otimes \theta \otimes \id)(\om) \in V^{\otimes 4}$,
%		\end{center}
%		and say that $\om^{\theta}$ is a {\it Mori-Smith twist (MS twist)} of $\om$ by $\theta$.
	\end{enumerate}
\end{definition}

%\vspace{-5pt}
For $n\geq 2$, let $V$ be an $n$-dimensional vector space.
Fix a basis $\{x_1,\dots,x_n\}$ for $V$.
For $\om \in V^{\otimes m}$, there exists a unique $\om_{i} \in V^{\otimes m-1}$ $(1\leq i \leq n)$ such that $\om=x_1 \otimes \om_1+\cdots+x_n \otimes \om_n$.
In this case, we define by $\partial_{x_i}\om:=\om_{i}$ 
the {\it left partial derivative} of $\om$ with respect to $x_i$ for $1 \le i \le n$. 
Similarly, there exists unique $\om'_i \in V^{\otimes m-1}$ 
$(1\leq i \leq n)$ such that 
$\om=\om'_1 \otimes x_1+\cdots+ \om'_n \otimes x_n$,
so we define by $\om \partial_{x_i}:=\om'_i$ the {\it right partial derivative} of $\om$ with respect to $x_i$ for $1 \le i \le n$. 

%%%%%%%%%%%%%%%%%
\begin{lemma}\label{lem.TSP}
%	Let $V$ be a finite dimensional vector space and fix a basis $\{x_1,\dots,x_n\}$ for $V$.
%	Let $\om \in V^{\otimes m}$.
%	Then the following statements are equivalent:
%	\begin{enumerate}[{\rm (1)}]
%		\item $\om$ is a twisted superpotential.
%		\item
%		$(\bx^{t}\bM(\om))^{t}=Q(\bM(\om)\bx)$ for some $Q \in \GL_n(k)$.
%		\item $(\partial_{x_1} \om, \dots, \partial_{x_n} \om)=(\om \partial_{x_1}, \dots, \om \partial_{x_n})$ as homogeneous two-sided ideals of $\klang$.
%	\end{enumerate}
    Let $V$ be a $2$-dimensional vector space with a basis $\{x_1, x_2\}$ and
    $\om \in V^{\otimes 4} \setminus \{0\}$. Then $\om$ is a twisted superpotential if
    and only if $(\partial_{x_1} \om, \partial_{x_2} \om)=(\om \partial_{x_1}, \om \partial_{x_2})$ as homogeneous two-sided ideals of $k \langle x_1,x_2 \rangle$.
\end{lemma}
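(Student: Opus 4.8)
The plan is to rephrase both sides of the claimed equivalence as a single statement about subspaces of $V^{\otimes 3}$, after which everything becomes elementary linear algebra. The key preliminary observation is that $\partial_{x_1}\om,\partial_{x_2}\om,\om\partial_{x_1},\om\partial_{x_2}$ all lie in $V^{\otimes 3}$, i.e.\ they are homogeneous of degree $3$ in $k\langle x_1,x_2\rangle$, and, for homogeneous elements of degree $3$, the two-sided ideal they generate is determined by (indeed generated by) its degree-$3$ component, which is precisely their $k$-linear span; hence the right-hand side of the lemma is equivalent to
\[
\operatorname{span}_k\{\partial_{x_1}\om,\partial_{x_2}\om\}=\operatorname{span}_k\{\om\partial_{x_1},\om\partial_{x_2}\}
\]
as subspaces of $V^{\otimes 3}$. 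I will write $W$ for this common subspace when the two spans agree, and note that $\dim_k W\in\{1,2\}$ because $\om\neq 0$.

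Next I would record two elementary facts. (1) Since $\varphi$ is the cyclic shift sending the last tensor factor to the front, collecting the monomials of $\varphi(\om)$ whose first factor is $x_i$ reproduces $\om\partial_{x_i}$; that is, $\partial_{x_i}(\varphi(\om))=\om\partial_{x_i}$ for $i=1,2$, equivalently $\varphi(\om)=x_1\otimes(\om\partial_{x_1})+x_2\otimes(\om\partial_{x_2})$. (2) If $\theta\in\GL(V)$ has matrix $(t_{ji})$, i.e.\ $\theta(x_i)=\sum_j t_{ji}x_j$, then for any $\mu\in V^{\otimes 4}$ the decomposition $\mu=\sum_i x_i\otimes\partial_{x_i}\mu$ gives $(\theta\otimes\id^{\otimes 3})(\mu)=\sum_j x_j\otimes\bigl(\sum_i t_{ji}\,\partial_{x_i}\mu\bigr)$, hence $\partial_{x_j}\bigl((\theta\otimes\id^{\otimes 3})\mu\bigr)=\sum_i t_{ji}\,\partial_{x_i}\mu$; since $(t_{ji})$ is invertible, applying $\theta\otimes\id^{\otimes 3}$ does not change $\operatorname{span}_k\{\partial_{x_1}\mu,\partial_{x_2}\mu\}$.

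With these in hand, both implications are short. For ($\Rightarrow$), if $\om=(\theta\otimes\id^{\otimes 3})(\varphi(\om))$ for some $\theta\in\GL(V)$, then applying (2) with $\mu=\varphi(\om)$ and then (1) gives $\operatorname{span}_k\{\partial_{x_i}\om\}=\operatorname{span}_k\{\partial_{x_i}(\varphi(\om))\}=\operatorname{span}_k\{\om\partial_{x_i}\}$, which is the desired equality. For ($\Leftarrow$), suppose the two spans coincide and equal $W$. By (1), the formula in (2), and uniqueness of the decomposition of an element of $V^{\otimes 4}$ as $x_1\otimes(-)+x_2\otimes(-)$, a $\theta\in\GL(V)$ with matrix $(t_{ji})$ satisfies $(\theta\otimes\id^{\otimes 3})(\varphi(\om))=\om$ precisely when $\partial_{x_j}\om=\sum_i t_{ji}\,\om\partial_{x_i}$ for $j=1,2$. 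Since $\{\partial_{x_1}\om,\partial_{x_2}\om\}$ and $\{\om\partial_{x_1},\om\partial_{x_2}\}$ both span $W$, I would produce such an invertible $(t_{ji})$: if $\dim_k W=2$ both pairs are bases of $W$ and the change-of-basis matrix works; if $\dim_k W=1$, write $\partial_{x_j}\om=a_j w$ and $\om\partial_{x_i}=b_i w$ with $(a_1,a_2)\neq(0,0)\neq(b_1,b_2)$, extend $(b_1,b_2)$ and $(a_1,a_2)$ to bases of $k^2$, and take the linear automorphism of $k^2$ sending the first basis to the second (so in particular $(b_1,b_2)\mapsto(a_1,a_2)$). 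The resulting $\theta$ exhibits $\om$ as a twisted superpotential.

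I expect the two points needing care to be the initial reduction — one must justify that an ideal generated purely in degree $3$ is determined by its degree-$3$ piece, so that the ideal equality becomes a span equality — and the degenerate case $\dim_k W=1$ in the converse, where the two generating pairs of $W$ need not be bases and so the invertible matrix must be constructed by hand rather than read off as a change of basis. Everything else is routine bookkeeping about $\varphi$ and about the $\GL(V)$-action on the first tensor slot.
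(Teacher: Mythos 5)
Your proof is correct and follows essentially the same route as the paper: the forward direction reads the coefficients of the twisting matrix off the decomposition $\varphi(\om)=x_1\otimes\om\partial_{x_1}+x_2\otimes\om\partial_{x_2}$, and the converse constructs the matrix by a case split on whether the partial derivatives are linearly independent (after the reduction, implicit in the paper, from equality of ideals to equality of degree-$3$ spans). The only real difference is the degenerate one-dimensional case, where the paper writes $\partial_{x_2}\om=\a\,\partial_{x_1}\om$ and lists five explicit matrices according to which scalars vanish, whereas your uniform ``send one nonzero coefficient vector to the other by an invertible map'' argument is cleaner and also covers the subcase $\partial_{x_1}\om=0$ that the paper's parametrization tacitly excludes.
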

%%%%

%%%%
\begin{proof}
	Let $\om \in V^{\otimes 4} \setminus \{0\}$.
	If $\om$ is a twisted superpotential, then there exists $\tau=
	\begin{pmatrix}
		a & b \\
		c & d
	\end{pmatrix}
	\in \GL_2(k)$ such that
	$(\tau \otimes \id^{\otimes 3})(\varphi(\om))=\om$. In this case, 
	$\partial_{x_1} \om=a \om\partial_{x_1}+c \om\partial_{x_2}$ and
	$\partial_{x_2} \om=b \om\partial_{x_1}+d \om\partial_{x_2}$. This means that
	$(\partial_{x_1} \om, \partial_{x_2} \om)=(\om \partial_{x_1}, \om \partial_{x_2})$ as homogeneous two-sided ideals of $k \langle x_1,x_2 \rangle$.
	
	Conversely, assume that $(\partial_{x_1} \om, \partial_{x_2} \om)=(\om \partial_{x_1}, \om \partial_{x_2})$ as homogeneous two-sided ideals of $k \langle x_1,x_2 \rangle$.
	If $\partial_{x_1} \om$ and $\partial_{x_2} \om$ are linearly independent, then
	there exists $\begin{pmatrix}
		a & b \\
		c & d
	\end{pmatrix} \in \GL_2(k)$ such that $\partial_{x_1} \om=a \om\partial_{x_1}+b \om\partial_{x_2}$ and
	$\partial_{x_2} \om=c \om\partial_{x_1}+d \om\partial_{x_2}$. In this case, we set
	$\tau :=\begin{pmatrix}
		a & c \\
		b & d
	\end{pmatrix} \in \GL_2(k)$. 
 %The following holds: 
 Then 
 %\vspace{-5pt}
	\begin{align*}
		(\tau \otimes \id^{\otimes 3})(\varphi(\om))
		& =  \tau(x_1) \otimes \om\partial_{x_1}+\tau(x_2) \otimes \om\partial_{x_2} \\
		& =(ax_1+cx_2) \otimes \om\partial_{x_1}+(bx_1+dx_2) \otimes \om\partial_{x_2} \\
		&=  x_1 \otimes (a\om\partial_{x_1}+b\om\partial_{x_2})+x_2 \otimes (c\om\partial_{x_1}+d\om\partial_{x_2}) \\
		& =x_1 \otimes \partial_{x_1} \om+x_2 \otimes \partial_{x_2}\om =\om.
	\end{align*}
%	so $\om$ is a twisted superpotential. 
	Assume that there exists $\a \in k$
	such that $\partial_{x_2} \om=\a \partial_{x_1} \om$. Since $(\partial_{x_1} \om, \partial_{x_2} \om)=(\om \partial_{x_1}, \om \partial_{x_2})$, there exist $\l,\mu \in k$
	such that $\om\partial_{x_1}=\l \partial_{x_1}\om$ and $\om\partial_{x_2}=\mu \partial_{x_1}\om$. Note that $(\l,\mu) \neq (0,0)$. 
	When $\a=0$ and $\l \neq 0$, then we set $\tau:=
	\begin{pmatrix}
		1/\l & -\mu/\l \\
		0 & 1 
	\end{pmatrix}$. 
	When $\a=0$ and $\l=0$, then we set 
	$\tau:=
	\begin{pmatrix}
		0 & 1 \\
		1/\mu & 0
	\end{pmatrix}$. 
	When $\a \neq 0$ and $\l=0$, then we set
	$\tau:=
	\begin{pmatrix}
		1 & 0 \\
		1/\mu & \a/\mu 
	\end{pmatrix}$. 
	When $\a \neq 0$ and $\mu=0$, then we set
	$\tau:=
	\begin{pmatrix}
	1/\l & \a/\l \\
	0 & 1 
	\end{pmatrix}$. 
	When $\a \neq 0$ and $\l\mu \neq 0$, then we set
	$\tau:=
	\begin{pmatrix}
	1/\l & 0 \\
	0 & \a/\mu 
	\end{pmatrix}$. 
	Therefore, for every case, 
	$\om$ is a twisted superpotential.
\end{proof}

The {\it derivation-quotient algebra} $\cD(\om)$ of $\om \in V^{\otimes m}$ is defined by
\[
\cD(\om):=\klang/(\partial_{x_1}\om, \dots, \partial_{x_n}\om). 
\]
%%%%%%%%

\begin{remark}
By \cite[Lemma 2.2]{MoS2}, the linear span 
$\langle\partial_{x_1}\om, \dots, \partial_{x_n}\om \rangle_{k}$ 
%${\rm span}\{\partial_{x_1}\om, \dots, \partial_{x_n}\om \}$ 
does not depend on the choice of a basis for $V$.
\end{remark}
%%%
%\begin{definition}[{\cite[page 390]{MoS1}, cf. \cite{IM2}}]
For a superpotential $\om\in V^{\otimes m}$ and $\theta\in \mathrm{GL}(V)$, 
%\vspace{-5pt}
$$
\om^{\theta}:=(\theta^{m-1}\otimes \theta^{m-2}\otimes \cdots \otimes \theta \otimes \mathrm{id})(\om)
%\vspace{-5pt}
$$
is called {\it the MS-twist of $\om$ by $\theta$} 
(\cite[page 390]{MoS1}, cf. \cite{IM2}). 
%\end{definition}
%%%
For $\om \in V^{\otimes 4}$, we set 
%\[
$
\Aut(\om):=\{\theta \in \GL_2(k) \mid (\theta^{\otimes 4})(\om)=\l \om \,\,\,\textnormal{ for some } \l \in k \setminus \{0\} \}$.
%\]
Note that $\Aut(\om)$ is a subgroup of $\GL_2(k)$.
For any element $\om \in V^{\otimes 4}$,
$\Aut(\om)$ becomes a subset of $\GrAut_k \cD(\om)$
(see \cite[Lemma 3.1]{MoS1}).

%%%
\begin{lemma}[{\rm \cite[Lemma 2.7]{MaS}}]\label{prop.MStwist}
Let $\om \in V^{\otimes 4}$ be a twisted superpotential and $\theta \in \Aut(\om)$.
Then the MS twist $\om^{\theta}$ of $\om$ by $\theta$ is a twisted superpotential.
\end{lemma}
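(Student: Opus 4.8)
The plan is to verify the defining condition for a twisted superpotential directly for $\om^{\theta}$, exhibiting an explicit twist built from a twist of $\om$ and from $\theta$. Since $\om$ is a twisted superpotential, fix $\tau \in \GL_2(k)$ with $(\tau \otimes \id^{\otimes 3})(\varphi(\om)) = \om$, equivalently $\varphi(\om) = (\tau^{-1} \otimes \id^{\otimes 3})(\om)$; and since $\theta \in \Aut(\om)$, fix $\l \in k \setminus \{0\}$ with $(\theta^{\otimes 4})(\om) = \l\om$.

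The first step is the elementary compatibility of the cyclic map $\varphi$ with MS-twisting. Because $\om^{\theta} = (\theta^{3} \otimes \theta^{2} \otimes \theta \otimes \id)(\om)$ and $\varphi$ moves the fourth tensor slot (the one carrying $\id$) to the front while shifting the others one place to the right, checking on decomposable tensors $x_i \otimes x_j \otimes x_k \otimes x_l$ and extending by linearity gives
\[
\varphi(\om^{\theta}) = (\id \otimes \theta^{3} \otimes \theta^{2} \otimes \theta)(\varphi(\om)).
\]
Substituting $\varphi(\om) = (\tau^{-1} \otimes \id^{\otimes 3})(\om)$ and using that operators acting on disjoint tensor factors commute yields $\varphi(\om^{\theta}) = (\tau^{-1} \otimes \theta^{3} \otimes \theta^{2} \otimes \theta)(\om)$.

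The second step is to read off the new twist. Applying $\tau' \otimes \id^{\otimes 3}$ to the last identity gives $(\tau'\tau^{-1} \otimes \theta^{3} \otimes \theta^{2} \otimes \theta)(\om)$, and we want this to equal $\om^{\theta} = (\theta^{3} \otimes \theta^{2} \otimes \theta \otimes \id)(\om)$. Taking $\tau' := \l^{-1}\theta^{4}\tau \in \GL_2(k)$, the expression becomes $\l^{-1}(\theta^{4} \otimes \theta^{3} \otimes \theta^{2} \otimes \theta)(\om) = \l^{-1}(\theta^{3} \otimes \theta^{2} \otimes \theta \otimes \id)\bigl((\theta^{\otimes 4})(\om)\bigr)$, which by $(\theta^{\otimes 4})(\om) = \l\om$ collapses to $(\theta^{3} \otimes \theta^{2} \otimes \theta \otimes \id)(\om) = \om^{\theta}$. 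Hence $(\tau' \otimes \id^{\otimes 3})(\varphi(\om^{\theta})) = \om^{\theta}$, so $\om^{\theta}$ is a twisted superpotential, with twist $\l^{-1}\theta^{4}\tau$.

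I do not expect a real obstacle here. The only points needing care are the bookkeeping of which power of $\theta$ sits in which tensor slot after applying $\varphi$, and the placement of $\tau^{-1}$ versus $\tau$. The essential use of the hypothesis is that $\theta \in \Aut(\om)$ is exactly what lets the spurious factor $\theta^{\otimes 4}$ introduced by the candidate twist be reabsorbed into the scalar $\l$; for a general $\theta \in \GL_2(k)$ this step would fail. The same computation works verbatim for any $V^{\otimes m}$, but I would present it only in the case $V^{\otimes 4}$ required by the statement.
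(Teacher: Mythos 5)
Your proof is correct: the identity $\varphi(\om^{\theta})=(\id\otimes\theta^{3}\otimes\theta^{2}\otimes\theta)(\varphi(\om))$ checks out on decomposable tensors, and the twist $\tau'=\l^{-1}\theta^{4}\tau$ does satisfy $(\tau'\otimes\id^{\otimes 3})(\varphi(\om^{\theta}))=\om^{\theta}$, with the hypothesis $\theta\in\Aut(\om)$ used exactly where you say it is. The paper itself gives no proof of this statement (it is quoted from \cite[Lemma 2.7]{MaS}), but your direct computation is the standard argument for this fact and is complete as written.
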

%%%

%%%
\begin{theorem}[{\rm \cite[Proposition 2.9]{MoS1}}]\label{thm.TSP1}
\begin{enumerate}[{\rm (1)}]
\item For any $3$-dimensional quadratic AS-regular algebra $A$, 
%Then 
there exists a unique twisted superpotential $\om$
which is a homogeneous polynomial of $\kxyz$ of degree $3$ up to non-zero scalar multiples such that $A=\cD(\om)$.
%\vspace{-10pt}
\item %Let $A$ be a $3$-dimensional cubic AS-regular algebra. 
For any $3$-dimensional quadratic AS-regular algebra $A$, 
%Then 
there exists a unique twisted superpotential $\om$ which is a homogeneous polynomial of $\kxy$ of degree $4$ up to non-zero scalar multiples such that $A=\cD(\om)$.
\end{enumerate}
\end{theorem}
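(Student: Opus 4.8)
The plan is to produce the superpotential as the generator of a canonical one-dimensional subspace of $V^{\otimes(N+1)}$ attached to $A$, and then to invoke the derivation criterion of \lemref{lem.TSP} to recognize it as a twisted superpotential. Throughout write $A = k\langle x_1,\dots,x_n\rangle/(R) = T(V)/(R)$, where $V = A_1$ and $R$ is the space of defining relations; in case (1) we have $n = 3$, $R \subseteq V^{\otimes 2}$ and $\dim_k R = 3$, while in case (2) we have $n = 2$, $R \subseteq V^{\otimes 3}$ and $\dim_k R = 2$. Put $N = 2$ (quadratic) or $N = 3$ (cubic), so that $R \subseteq V^{\otimes N}$ and in both cases $\dim_k V = \dim_k R = n$.

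The structural input I would take from \cite{ATV1} is that the minimal free resolution of the trivial module $k$ is the $N$-Koszul resolution
\[
0 \to A \otimes W_3 \to A \otimes R \to A \otimes V \to A \to k \to 0,
\]
where $W_3 = (V \otimes R) \cap (R \otimes V) \subseteq V^{\otimes(N+1)}$, and that the Gorenstein condition makes this resolution self-dual; in particular it forces $\dim_k W_3 = 1$. I would then let $\om$ be a nonzero generator of $W_3$, a homogeneous element of $V^{\otimes(N+1)}$ of degree $N+1$ (degree $3$ in case (1), degree $4$ in case (2)), unique up to a nonzero scalar by construction.

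For existence, observe that $\om \in V \otimes R$ gives $\partial_{x_i}\om \in R$ and $\om \in R \otimes V$ gives $\om\partial_{x_i} \in R$, so that both derivative-ideals are contained in $(R)$. The crux is that these derivatives span $R$ rather than a proper subspace. Here I would use that a $3$-dimensional AS-regular algebra is a domain (\cite{ATV1}): if, say, the left derivatives failed to span $R$, then after a change of variables $\om$ would be left-divisible by a proper subspace of $V$, and combining this with $\om \in R \otimes V$ would exhibit relations of the form $x_i s$ with $s \neq 0$, contradicting the domain property. (In uniform terms, this non-degeneracy reflects, under the self-duality of the resolution, the fact that the first differential $A \otimes V \to A$ is given by the full linearly independent basis $x_1,\dots,x_n$.) Granting that both derivative-families span $R$, we get $\cD(\om) = T(V)/(\partial_{x_1}\om,\dots,\partial_{x_n}\om) = A$, and the two derivative-ideals both equal $(R)$; by \lemref{lem.TSP} in case (2), and its three-variable analogue in case (1), this equality of ideals is precisely the statement that $\om$ is a twisted superpotential.

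For uniqueness, let $\om'$ be any twisted superpotential of the stated degree with $\cD(\om') = A$. Then $\{\partial_{x_i}\om'\}$ span $R$, so $\om' \in V \otimes R$; and, exactly as in the proof of \lemref{lem.TSP}, the defining relation of a twisted superpotential writes the left derivatives as an invertible linear combination of the right derivatives, so $\{\om'\partial_{x_i}\}$ span the same space $R$ and $\om' \in R \otimes V$. Hence $\om' \in (V\otimes R)\cap(R\otimes V) = W_3$, which is one-dimensional, so $\om'$ is a scalar multiple of $\om$. The main obstacle is the structural input from \cite{ATV1}: pinning down the $N$-Koszul shape of the resolution, the identification $W_3 = (V\otimes R)\cap(R\otimes V)$ with $\dim_k W_3 = 1$, and the non-degeneracy of the partial derivatives that makes the Gorenstein self-duality translate into the twisted-superpotential symmetry. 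Once these are in place, \lemref{lem.TSP} reduces the entire statement to the one-dimensionality of $W_3$.
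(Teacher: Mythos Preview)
The paper does not supply its own proof of this statement; it is quoted directly from \cite[Proposition~2.9]{MoS1} and used as a black box. Your outline is precisely the standard argument behind that reference (and \cite{BSW}): extract $\om$ as a generator of the one-dimensional space $W_3=(V\otimes R)\cap(R\otimes V)$ coming from the $N$-Koszul resolution, and read off the twisted symmetry from Gorenstein self-duality together with \lemref{lem.TSP} (and its evident $n$-variable analogue, not stated in the present paper). So there is nothing to compare against, but your approach matches the literature.

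One point deserves tightening. Your domain argument for the linear independence of the $\partial_{x_i}\om$ does not go through as written: knowing only that the left derivatives fail to span $R$ gives, after a change of basis, $\partial_{x_1}\om=0$, i.e.\ $\om$ has no $x_1$-term on the left; combining this with $\om\in R\otimes V$ does not by itself manufacture a nonzero product $x_i s$ that dies in $A$. The correct justification is the one you put in parentheses: under the Gorenstein self-duality the last differential $A\otimes W_3\to A\otimes R$ is dual to the first differential $A\otimes V\to A$, and since the latter is given by the linearly independent family $x_1,\dots,x_n$, the former must be given by a linearly independent family in $R$, namely the $\partial_{x_i}\om$. Promote that remark to the main line and drop the domain heuristic; with that fix your existence and uniqueness arguments are complete.
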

%%%

%Note that, 
By Theorem \ref{thm.TSP1}, the classification of $3$-dimensional AS-regular algebras finitely generated in degree $1$ 
%over $k$
can be reduced to the classification of twisted superpotentials whose derivation-quotient algebras are AS-regular.
%Our main focus of this paper is to study $3$-dimensional cubic AS-regular algebras.

Now, recall the condition to check whether a cubic algebra is AS-regular or not.
Let $V$ be a $2$-dimensional vector space with a basis $\{x_1,x_2\}$ 
and, $R$ a $2$-dimensional subspace of $V^{\otimes 3}$ with a basis $\{g_1,g_2\}$. 
The transpose of a matrix $N$ is denote by $N^{t}$ .
We write $\bx=
\begin{pmatrix}
	x_1 \\
	x_2
\end{pmatrix}$ and 
$\bg=
\begin{pmatrix}
	g_1 \\
	g_2
\end{pmatrix}$.
Then there exists a $2 \times 2$ matrix 
$\bM=\begin{pmatrix}
	m_{11} & m_{12} \\
	m_{21} & m_{22}
\end{pmatrix}$ 
whose entries belong to $V^{\otimes 2}$ such that $\bg=\bM\bx$.
A cubic algebra $T(V)/(R)$ is called {\it standard} 
if there exists a basis $\{x_1,x_2\}$ for $V$ and $\{g_1,g_2\}$ for $R$ such that
$(\bx^{t}\bM)^{t}=Q\bg$ for some $Q \in \GL_2(k)$.
When a cubic algebra $T(V)/(R)$ is standard, we regard entries of the matrix $\bM$
in the above as elements of the Segre product $k[x_1,x_2] \circ k[x_1,x_2]$.

%%%%
\begin{theorem}[{\rm \cite[Theorem 1]{ATV1}}]
Let $V$ be a $2$-dimensional vector space and $R$ a $2$-dimensional subspace of $V^{\otimes 3}$. 
Then a cubic algebra $T(V)/(R)$ is a $3$-dimensional AS-regular algebra
if and only if $T(V)/(R)$ is standard and the common zero locus in $\PP^1 \times \PP^1$ of entries of the matrix $\bM$ in the above is empty.
\end{theorem}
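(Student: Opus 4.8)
The plan is to reconstruct the Artin--Tate--Van den Bergh argument: attach to a standard cubic algebra $A=T(V)/(R)$ a self-dual ``Koszul-type'' complex of graded right $A$-modules, and reduce the whole statement to showing that exactness of this complex — equivalently, the correct Hilbert series of $A$ — is governed precisely by the emptiness of the common zero locus $Z\subseteq\PP^{1}\times\PP^{1}$ of the entries $m_{ij}$ of $\bM$. The complex is
\[
0 \longrightarrow A(-4) \xrightarrow{\,\bx\,} A(-3)^{2} \xrightarrow{\,\bM\,} A(-1)^{2} \xrightarrow{\,\bx^{t}\,} A \longrightarrow k \longrightarrow 0 ,
\]
the maps being multiplication by the displayed matrices. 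Checking that it is a complex is direct: one composite is a transpose of $\bM\bx=\bg$, which is $0$ in $A$ since $g_{1},g_{2}\in(R)$, and the other is a scalar multiple of $\bx^{t}\bM=(Q\bg)^{t}$, again $0$ in $A$; conversely, for these natural candidate maps to compose to zero one needs exactly a relation $(\bx^{t}\bM)^{t}=Q\bg$ with $Q\in\GL_{2}(k)$, i.e.\ standardness of $A$. Applying $\RHom_{A}(-,A)$ and transposing identifies the complex with its own $k$-dual up to shift. Hence \emph{once exactness is known}, $\gldim A=3$ and the Gorenstein condition $\Ext_{A}^{i}(k,A)\cong k$ for $i=3$ (and $0$ otherwise) follow formally, the resulting Hilbert series $H_{A}(t)=1/((1-t)^{2}(1-t^{2}))$ gives $\GKdim A=3$, and so $A$ is $3$-dimensional AS-regular. (Equivalently one could work through the presentation $A=\cD(\om)$ of a degree-$4$ twisted superpotential $\om\in V^{\otimes 4}$, using \lemref{lem.TSP} to pass between the two ends of the complex.)

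For the converse, if $A$ is $3$-dimensional cubic AS-regular then $\gldim A=3$ and the Gorenstein condition force the minimal graded free resolution of $k$ to have exactly the shape above — two degree-$1$ generators, a rank-$2$ relation module in degree $3$, and, by self-duality under $\RHom_{A}(-,A)$, final term $A(-4)$ with middle differential $\bM$ satisfying $(\bx^{t}\bM)^{t}=Q\bg$ after a suitable choice of bases, i.e.\ $A$ is standard. Thus in both directions the statement reduces to a single equivalence: the complex above is exact if and only if $Z=\varnothing$.

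This equivalence is the real content and, I expect, the main obstacle: it is genuinely geometric, and in particular is \emph{not} a formal consequence of the complex having the right shape, the issue being to pin down $\dim_{k}A_{n}$ for all $n$. I would argue through the point scheme. The $m_{ij}$ span a linear system of bidegree-$(1,1)$ forms with base locus $Z$, and the truncated point modules of length $3$ of $A$ form a subscheme $\Gamma\subseteq\PP^{1}\times\PP^{1}\times\PP^{1}$; failure of exactness corresponds precisely to $\Gamma$ acquiring ``fat'' points or extra components sitting over $Z$. When $Z=\varnothing$ one shows both projections of $\Gamma$ onto adjacent factors are isomorphisms onto a common divisor $E$ of bidegree $(2,2)$ (or onto all of $\PP^{1}\times\PP^{1}$), yielding $\sigma\in\Aut_{k}E$, and compares $A$ with the twisted homogeneous coordinate ring $B=B(E,\cL,\sigma)$ of the point scheme: the canonical surjection $A\twoheadrightarrow B$ has kernel generated by a normal non-zerodivisor of degree $4$, and $B$ is Noetherian with the Hilbert series of a $(2,2)$-curve, so $H_{A}(t)=H_{B}(t)/(1-t^{4})=1/((1-t)^{2}(1-t^{2}))$ — exactly the series forced by exactness of the complex. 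Combined with the non-vanishing supplied by $Z=\varnothing$ (which makes the relevant differentials injective), this pins the homology of the complex to zero, so it is a resolution; this last step is the technical core of the ATV argument. Conversely, if $Z\neq\varnothing$, a point of $Z$ produces a point module that cannot be prolonged — a point of multiplicity $>1$ in the point-module scheme — incompatible with the Gorenstein resolution, so $Z=\varnothing$ is also necessary. In short, all the difficulty is in the Hilbert-series/point-scheme analysis; the homological bookkeeping around it is routine.
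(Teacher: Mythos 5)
The paper itself offers no proof of this statement: it is quoted directly from \cite{ATV1} (Theorem~1 there) and used as a black box, so there is nothing in the paper to compare your argument against line by line. Judged on its own terms, your write-up is a faithful roadmap of the Artin--Tate--Van den Bergh proof: the complex $0\to A(-4)\xrightarrow{\bx}A(-3)^{2}\xrightarrow{\bM}A(-1)^{2}\xrightarrow{\bx^{t}}A\to k\to 0$, its self-duality under $\RHom_A(-,A)$, and the reduction of everything to a Hilbert-series computation carried out geometrically through the point scheme and the twisted homogeneous coordinate ring. The formal consequences you draw from exactness ($\gldim A=3$, the Gorenstein condition, $H_A(t)=1/((1-t)^{2}(1-t^{2}))$ and hence $\GKdim A=3$) are all correct. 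One small imprecision: for the two composites to vanish one only needs \emph{some} matrix $Q$ with $(\bx^{t}\bM)^{t}=Q\bg$; the invertibility of $Q$ (i.e.\ standardness) is what the self-duality and exactness at the outer terms require, not the mere complex property.

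The genuine gap is the one you flag yourself: the equivalence ``the complex is exact $\iff Z=\varnothing$'' is described but never established, and essentially all of the content of the theorem lives there. Two subsidiary steps are also only gestured at. First, in the converse direction, deducing that the minimal free resolution of $k$ has exactly the displayed shape requires control of $\dim_k A_n$ for $n\ge 4$ (e.g.\ $\dim_k A_4=9$, equivalently $\dim_k(VR\cap RV)=1$), which is again the Hilbert-series statement under discussion, so the logic must be arranged to avoid circularity. Second, the necessity of $Z=\varnothing$ is not that a point of $Z$ ``cannot be prolonged'': if every entry $m_{ij}$ of $\bM$ vanishes at $(p,q)$, then since the multilinearization of $g_i$ at $(p,q,r)$ is $\sum_j m_{ij}(p,q)\,x_j(r)$, the triple $(p,q,r)$ lies in $\Gamma_A$ for \emph{every} $r\in\PP^1$ --- the truncated point module prolongs in a $\PP^1$'s worth of ways --- and converting this overabundance into a contradiction with AS-regularity still needs the representability and dimension-count arguments of \cite{ATV1}. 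So the proposal is an accurate outline of the correct proof, but its technical core is deferred rather than supplied.
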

%%%
%
%\vspace{-10pt}
%For $\om \in V^{\otimes 4}$, we set
%$\bM(\om):=
%\begin{pmatrix}
%	\partial_{x_1}\om\partial_{x_1} & \partial_{x_1}\om\partial_{x_2} \\
%	\partial_{x_2}\om\partial_{x_1} & \partial_{x_2}\om\partial_{x_2}
%\end{pmatrix}$. 
%%whose entries belong to $V^{\otimes 2}$.
%\vspace{-10pt}
%We write $\bx=
%\begin{pmatrix}
%	x_1 \\
%	\vdots \\
%	x_n
%\end{pmatrix}$ and denote by $N^{t}$ the transpose of a matrix $N$.

%%%%
\begin{lemma}[{\cite[Lemma 2.10]{MaS}}]
\label{prop.Stand}
Let $\om \in V^{\otimes 4}$ be a twisted superpotential. 
If $\partial_{x_1} \om$ and $\partial_{x_2} \om$ are linearly independent, 
then the derivation-quotient algebra $\cD(\om)$ is standard. 
%Let $V$ be a $2$-dimensional vector space with a basis $\{x_1, x_2\}$ 
%and $\om \in V^{\otimes 4}$ a twisted superpotential.
%Then $\cD(\om)$ is standard if and only if $\partial_{x_1} \om, \partial_{x_2} \om$ are linearly independent.
\end{lemma}
%%%%

%%%%%
%\begin{proof}
%Let $\om \in V^{\otimes 4}$ be a twisted superpotential. 
%We set
%$\bM(\om):=
%\begin{pmatrix}
%	\partial_{x_1}\om\partial_{x_1} & \partial_{x_1}\om\partial_{x_2} \\
%	\partial_{x_2}\om\partial_{x_1} & \partial_{x_2}\om\partial_{x_2}
%\end{pmatrix}$ whose entries belong to $V^{\otimes 2}$. 
%Assume that $\cD(\om)=T(V)/(\partial_{x_1}\om,\partial_{x_2}\om)$ is standard.
%In this case, the dimension of the vector space generated by $\partial_{x_1}\om, \partial_{x_2}\om$ is two.
%This means that $\partial_{x_1}\om, \partial_{x_2}\om$ are linearly independent.
%	
%Conversely, we suppose that $\partial_{x_1}\om, \partial_{x_2}\om$ are linearly independent.
%Since $\om$ is a twisted superpotential, by Lemma \ref{lem.TSP}, there exists $Q \in \GL_2(k)$ such that
%$(\bx^{t}\bM(\om))^{t}=Q(\bM(\om)\bx)$. Since $\partial_{x_1}\om, \partial_{x_2}\om$ are linearly independent,
%the entries in $(\bx^{t}\bM(\om))$ are also so. 
%%linearly independent.
%Therefore, $\cD(\om)$ is standard.
%\end{proof}
%%%%%%%%
\subsection{Geomertic algebras for cubic algebras. }\label{subsec.GA}
%We recall geometric approaches to study $3$-dimensional cubic AS-regular algebras. 
Let $V$ be a finite dimensional vector space and
$T(V)$ the tensor algebra on $V$. 
%over $k$.
%We say that $A$ is a {\it cubic algebra} if $A$ is equal to a quotient algebra $T(V)/(R)$
%of $T(V)$ where $R \subset V^{\otimes 3}$ is a subspace and
%$(R)$ is the homogeneous two-sided ideal of $T(V)$ generated by $R$.
%Let $A$ be a cubic algebra, that is, 
Suppose that $A$ is equal to a quotient algebra $T(V)/(R)$
of $T(V)$ where $R \subset V^{\otimes 3}$ is a subspace and
$(R)$ is the homogeneous two-sided ideal of $T(V)$ generated by $R$.
The dual space of $V$ is denoted by $V^{\ast}$.
Since $(V \otimes V \otimes V)^{\ast} \cong V^{\ast} \otimes V^{\ast} \otimes V^{\ast}$, every element $f \in R$ defines
a multilinear form from $V^{\ast} \times V^{\ast} \times V^{\ast}$ to $k$. 
For a cubic algebra $A=T(V)/(R)$, we define 
$\Gamma_A:=\{
(p_{1},p_{2},p_{3}) \in \PP(V^{\ast})^{\times 3} \mid f(p_{1},p_{2},p_{3})=0 \textnormal{ for all } f \in R
\}$. 
The $i$-th projection from $\PP(V^{\ast}) \times \PP(V^{\ast})$ to $\PP(V^{\ast})$ $(i=1,2)$ is denoted by $\pi_i$ . 
Two maps $\pi_{12}$ and $\pi_{23}$ from $\PP(V^{\ast})^{\times 3}$ to $\PP(V^{\ast}) \times \PP(V^{\ast})$ are defined 
as follows; for $(p_{1},p_{2},p_{3}) \in \PP(V^{\ast})^{\times 3}$,
	$\pi_{12}(p_{1},p_{2},p_{3}):=(p_{1},p_{2})$ and $\pi_{23}(p_{1},p_{2},p_{3}):=(p_{2},p_{3})$.
For a projective variety $E \subset \PP(V^{\ast}) \times \PP(V^{\ast})$, we define
%\vspace{-10pt}
	$$\Aut_k^G E:=\{
	\sigma \in \Aut_k E \mid (\pi_1 \circ \sigma)(p_{1},p_{2})=\pi_2(p_{1},p_{2}) \textnormal{ for all } (p_{1},p_{2}) \in E
	\}.
%\vspace{-10pt}
$$
A pair $(E,\sigma)$ is called {\it geometric} if 
$E \subset \PP(V^{\ast}) \times \PP(V^{\ast})$ is a projective variety and 
$\sigma \in \Aut_k^G E$.

%%%%
\begin{definition}[{\rm \cite[Definition 3.3]{MaS}, cf. \cite[Definition 4.3]{M}}]
\label{def.GA}
Let $A=T(V)/(R)$ be a cubic algebra where $R$ is a subspace of $V^{\otimes 3}$.
\begin{enumerate}[{\rm (1)}]
\item We say that {\it $A$ satisfies {\rm (G$1$)}} if there exists a geometric pair $(E,\sigma)$ such that
$\Gamma_A=\{(p_{1},\,p_{2},\,(\pi_2 \circ \sigma)(p_{1},p_{2})) \in \PP(V^{\ast})^{\times 3} \mid (p_{1},\,p_{2}) \in E\}$. 
In this case, we write $\cP(A)=(E,\sigma)$. 
We call $\Gamma_{A}$ the {\it point scheme} of $A$. 
%\vspace{-5pt}
		%$\pi_{12}$ and $\pi_{23}$ send $\cV(R)$ isomorphically onto their images
		%$\pi_{12}(\cV(R))$ and $\pi_{23}(\cV(R))$ respectively, and
		%\begin{center}
		%	$\pi_{12}(\cV(R))=\pi_{23}(\cV(R))$.
		%\end{center}
		%In this case, we write $\cP(A)=(E,\sigma)$ where $E:=\pi_{12}(\cV(R))$ and $\sigma:=\pi_{23} \circ \pi_{12}^{-1} \in \Aut_k^G E$,
		%and call $E$ the {\it point scheme} of $A$.		
\item We say that {\it $A$ satisfies {\rm (G$2$)}} if there exists a geometric pair $(E,\sigma)$ such that 
$R=\{f \in V^{\otimes 3} \mid f(p_1,p_2,(\pi_2 \circ \sigma)(p_1,p_2))=0 \textnormal{ for all } (p_1,\,p_2) \in E\}$. 
In this case, we write $A=\cA(E,\sigma)$.
%\vspace{-5pt}
\item We say that $A$ is a {\it geometric algebra {\rm (}for a cubic algebra{\rm)}} if $A$ satisfies (G1) and (G2) with $A=\cA(\cP(A))$.\vspace{-5pt}
	\end{enumerate}
\end{definition}
%%%%%%%%
%%%%%%%%
\begin{lemma}[{\rm \cite[Theorems 3.5, 3.6]{MaS}}]\label{thm.GA}
Let $A=T(V)/(R)$ and $A'=T(V)/(R')$ be geometric algebras with $\cP(A)=(E,\sigma)$ and $\cP(A')=(E',\sigma')$ where
$E$ and $E'$ are projective varieties in $\PP(V^{\ast}) \times \PP(V^{\ast})$ and $\sigma \in \Aut_k^G E$, $\sigma' \in \Aut_k^G E'$.
Then the following statements hold{\rm :}
\begin{enumerate}[{\rm (1)}]
\item \begin{minipage}[t]{32em}
	       Two graded algebras $A$ and $A'$ are isomorphic  
		if and only if there exists an automorphism $\tau$ of $\PP(V^{\ast})$ such that $(\tau \times \tau)(E)=E'$ and the following diagram commutes{\rm :} \end{minipage}\hfill
		\begin{minipage}[t]{6em} 
		{\small
		$
		\xymatrix{
			E \ar[r]^{\tau \times \tau} \ar[d]_{\sigma} & E' \ar[d]^{\sigma'} \\
			E \ar[r]_{\tau \times \tau} & E'
		}
		$}
		\end{minipage}
		%\vspace{-10pt}
%%%		
\item \begin{minipage}[t]{32em}
		Two categories $\GrMod A$ and $\GrMod A'$ are equivalent 
		if and only if there exists a sequence $\{\tau_n\}_{n \in \ZZ}$ of automorphisms of $\PP(V^{\ast})$
		such that $(\tau_{n} \times \tau_{n+1})(E)=E'$ and the following diagram commutes for every $n \in \ZZ${\rm :}
		 \end{minipage}\hfill
		\begin{minipage}[t]{7em}
		{\small 
		$
		\xymatrix@C=40pt{
			E \ar@<0.5ex>[r]^{\tau_{n} \times \tau_{n+1}} \ar[d]_{\sigma} & E' \ar[d]^{\sigma'} \\
			E \ar@<-0.5ex>[r]_{\tau_{n+1} \times \tau_{n+2}} & E'
		}
		$}
		\end{minipage}
	\end{enumerate}
	%\vspace{-20pt}
\end{lemma}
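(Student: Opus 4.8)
The plan is to reduce both parts to linear algebra on $V$ and then translate to $\PP(V^{\ast})$ via the multilinear-form description of the relations. For (1), I would first note that any graded algebra isomorphism $g\colon A\to A'$ is determined by its degree-$1$ component, which (identifying $A_1=A'_1=V$) is a linear automorphism $\phi\in\GL(V)$; conversely, $\phi$ extends to an automorphism of $T(V)$ and this descends to an isomorphism $A\to A'$ precisely when $\phi^{\otimes 3}(R)=R'$. Let $\tau\in\Aut_k\PP(V^{\ast})$ be the dual automorphism determined by $\phi$. Under the identification of $R\subseteq V^{\otimes 3}$ with a space of trilinear forms on $V^{\ast}$, the operator $\phi^{\otimes 3}$ corresponds to precomposition by $\tau^{\times 3}$; hence $\Gamma_{A'}=(\tau\times\tau\times\tau)(\Gamma_A)$ whenever $\phi^{\otimes 3}(R)=R'$. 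The geometric hypothesis enters in the converse: since $A$ and $A'$ are geometric algebras, (G2) says that $R$ (resp.\ $R'$) is \emph{exactly} the space of forms vanishing on $\Gamma_A$ (resp.\ $\Gamma_{A'}$), so $\Gamma_{A'}=\tau^{\times 3}(\Gamma_A)$ already forces $\phi^{\otimes 3}(R)=R'$. Thus $A\cong A'$ as graded algebras if and only if some $\tau\in\Aut_k\PP(V^{\ast})$ satisfies $\tau^{\times 3}(\Gamma_A)=\Gamma_{A'}$.

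Next I would decode the condition $\tau^{\times 3}(\Gamma_A)=\Gamma_{A'}$. By (G1), $\Gamma_A$ is the graph of $\pi_2\circ\sigma$ over $E$, i.e.\ $\Gamma_A=\{(p,q,(\pi_2\circ\sigma)(p,q)):(p,q)\in E\}$, and likewise for $A'$. Projecting onto the first two factors gives $(\tau\times\tau)(E)=E'$; comparing the third coordinates gives $(\pi_2\circ\sigma')\circ(\tau\times\tau)=\tau\circ(\pi_2\circ\sigma)$ on $E$. Since $\sigma\in\Aut_k^G E$, so that $\sigma(p,q)=(q,(\pi_2\circ\sigma)(p,q))$, and likewise $\sigma'\in\Aut_k^G E'$, this last identity is equivalent to $\sigma'\circ(\tau\times\tau)=(\tau\times\tau)\circ\sigma$, i.e.\ to commutativity of the stated square. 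Running the argument backwards from a given $\tau$ with $(\tau\times\tau)(E)=E'$ and that commuting square reconstructs $\tau^{\times 3}(\Gamma_A)=\Gamma_{A'}$, hence $\phi^{\otimes 3}(R)=R'$, hence $A\cong A'$. This proves (1).

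For (2) I would invoke Zhang's twisting theorem: for connected graded algebras finitely generated in degree $1$ (which ours are), $\GrMod A\cong\GrMod A'$ holds if and only if $A'$ is isomorphic as a graded algebra to a twist $A^{\psi}$ of $A$ by a twisting system $\psi=\{\psi_n\}_{n\in\ZZ}$ of graded $k$-algebra automorphisms. Writing the degree-$1$ part of $\psi_n$ as $\phi_n\in\GL(V)$, the twisting-system cocycle condition forces the degree-$3$ relation space of $A^{\psi}$ to be of the form $(\phi_i\otimes\phi_j\otimes\phi_k)(R)$ for suitable consecutive indices; translating to forms exactly as in (1), with $\tau_n\in\Aut_k\PP(V^{\ast})$ the dual automorphism determined by $\phi_n$, this says that $\Gamma_{A^{\psi}}$ is obtained from $\Gamma_A$ by applying $\tau_n\times\tau_{n+1}\times\tau_{n+2}$. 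Reading this off as graphs as before, and using that twisting leaves the point scheme unchanged, yields $(\tau_n\times\tau_{n+1})(E)=E'$ together with the ladder of commuting squares $\sigma'\circ(\tau_n\times\tau_{n+1})=(\tau_{n+1}\times\tau_{n+2})\circ\sigma$ for all $n$; conversely, such a ladder lets one assemble a twisting system and conclude $\GrMod A\cong\GrMod A^{\psi}\cong\GrMod A'$.

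The main obstacle is the equivalence $\phi^{\otimes 3}(R)=R'\iff\tau^{\times 3}(\Gamma_A)=\Gamma_{A'}$: the forward implication is formal, but the reverse is exactly where one must use that $A$ and $A'$ are \emph{geometric} algebras, so that (G2) identifies $R$ and $R'$ with the full spaces of forms vanishing on $\Gamma_A$ and $\Gamma_{A'}$ rather than merely contained in them — without this, equality of vanishing loci would not give equality of relation spaces. A secondary, purely technical nuisance is pinning down the exponents and indexing in the twisting system — which $\phi_n$ acts on which tensor factor of a degree-$3$ relation, and how this matches the shift $n\mapsto n+1$ in the $\tau_n$-ladder — which is easy to get off by one, and one must record the standing finiteness of these algebras to legitimately apply Zhang's theorem.
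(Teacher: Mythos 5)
The paper does not actually prove this lemma: it is quoted from \cite[Theorems 3.5, 3.6]{MaS} (the cubic analogue of Mori's criterion for quadratic geometric algebras), so there is no in-text proof to compare against. Your sketch follows precisely the argument used there and in the quadratic precedents --- linearise a graded isomorphism to $\phi\in\GL(V)$ with $\phi^{\otimes 3}(R)=R'$, use (G2) to upgrade equality of zero loci to equality of relation spaces, decode $\Gamma_A$ as the graph of $\pi_2\circ\sigma$ via (G1), and invoke Zhang's twisting theorem for the Morita statement --- and it is sound, correctly locating where geometricity is indispensable. The only blemishes are the bookkeeping issues you already flag ($\tau$ versus $\tau^{-1}$ in the dual identification, the indexing of the twisting system) together with the loosely worded aside that \emph{twisting leaves the point scheme unchanged}: a Zhang twist in fact moves $E$ to $(\tau_n\times\tau_{n+1})(E)$ inside $\PP(V^{\ast})\times\PP(V^{\ast})$, which is exactly the displayed conclusion, so nothing downstream is affected.
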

%%%

%We remark that, 
By Lemma \ref{thm.GA}, a classification of geometric algebras (for cubic algebra)
up to graded algebra isomorphism 
or graded Morita equivalence is reduced to the classification of geometric pairs.

%%%
\begin{definition}[{\rm \cite[Definition 3.7]{MaS}}]\label{def.equiv}
Let $V$ be a finite dimensional %$k$-
vector space and $E$ and $E'$ projective varieties in $\PP(V^{\ast}) \times \PP(V^{\ast})$. 
\begin{enumerate}[{\rm (1)}]
\item If there exist $\tau_1, \tau_2 \in \Aut_k \PP(V^{\ast})$ such that $E'=(\tau_1 \times \tau_2)(E)$, then we say that {\it $E$ and $E'$ are equivalent}, denoted by $E \sim E'$.
%\vspace{-5pt}
\item If there exists $\tau \in \Aut_k \PP(V^{\ast})$ such that $E'=(\tau \times \tau)(E)$, 
then we say that {\it $E$ and $E'$ are $2$-equivalent}, denoted by $E \sim_2 E'$. 
	\end{enumerate}
	%\vspace{-10pt}
\end{definition}
%%%

Let $A=T(V)/(R)=\cA(E,\sigma)$ and $A'=T(V)/(R')=\cA(E',\sigma')$ 
be geometric algebras where
$E$ and $E'$ are projective varieties in $\PP(V^{\ast}) \times \PP(V^{\ast})$, and $\sigma \in \Aut_k^G E$, $\sigma' \in \Aut_k^G E'$.
It is clear that, if $E$ and $E'$ are $2$-equivalent, then they are equivalent.
Lemma \ref{thm.GA} shows that, if $A$ and $A'$ are graded algebra isomorphic (resp. graded Morita equivalent), then
$E$ and $E'$ are $2$-equivalent (resp. equivalent), so we need to classify projective varieties in $\PP(V^{\ast}) \times \PP(V^{\ast})$
up to $2$-equivalence (resp. equivalence) as a first step of the classification of geometric algebras up to graded algebra isomorphism 
(resp. graded Morita equivalence).

Artin--Tate--Van den Bergh \cite{ATV1} proved that, 
if $A$ is a $3$-dimensional cubic AS-regular algebra, then the point scheme $\Gamma_A$ of $A$ is isomorphic to
either $\PP^1 \times \PP^1$ or a curve of bidegree $(2,2)$ in $\PP^1 \times \PP^1$.
More precisely, every $3$-dimensional cubic AS-regular algebra $A$ determines the pair $(E,\sigma)$, and $A$ is determined by
the pair $(E,\sigma)$, 
where $E$ is either $\PP^1 \times \PP^1$ or a curve of bidegree $(2,2)$ in $\PP^1 \times \PP^1$ and
$\sigma$ is an automorphism of $E$ satisfying $\pi_1 \circ \sigma=\pi_2$. 
%on $E$.

%Let $E \subset \PP^1 \times \PP^1$ be a projective variety.
%The group of all automorphisms of $E$ is denoted by $\Aut_k E$. 
%We set
%	$$\Aut_k^G E:=\{
%	\sigma \in \Aut_k E \mid \pi_1(\sigma(p,q))=\pi_2(p,q) \textnormal{ for all } (p,q) \in E
%	\}.$$

As a first step of giving the classification of $3$-dimensional cubic AS-regular algebras 
in terms of geometric algebras, 
we need to study curves of bidegree $(2,2)$ in $\PP^1 \times \PP^1$.
Note that Belmans \cite{B} classified curves of bidegree $(2,2)$ in $\PP^1 \times \PP^1$ up to isomorphism (see \cite[Table 3]{B} for details):
%\vspace{-5pt}
\begin{multicols}{2}
	\begin{itemize}
		\item elliptic curve %\vspace{-5pt}
		\item cuspidal curve %\vspace{-5pt} 
		\item nodal curve %\vspace{-5pt}
		\item two conics in general position %\vspace{-5pt}
		\item two tangent conics %\vspace{-5pt}
		\item a conic and two lines in a triangle %\vspace{-5pt}
		\item a conic and two lines intersecting in one point %\vspace{-5pt}
		\item quadrangle %\vspace{-5pt}
		\item twisted cubic and a bisecant %\vspace{-5pt}
		\item twisted cubic and a tangent line %\vspace{-5pt}
		\item double conic %\vspace{-5pt}
		\item two double lines %\vspace{-5pt}
		\item a double line and two lines %\vspace{-5pt}
	\end{itemize}
\end{multicols}

%\vspace{-13pt}

%%%%%%%
\begin{lemma}\label{lem.subvar}
Let $E \subset \PP^1 \times \PP^1$ be a projective variety.
If $\Aut_k^G E \neq \emptyset$, 
then the following conditions are equivalent{\rm :}
\begin{enumerate}[{\rm (1)}]
   \item %$E$ contains a projective variety $\ell$ of bidegree $(0,1)$.
    A projective variety $\ell$ of bidegree $(0,1)$ is a subvariety of $E$. 
   %\vspace{-5pt}
    \item A projective variety $\ell'$ of bidegree $(1,0)$  is a subvariety of $E$. 
    \end{enumerate}
    %\vspace{-10pt}
\end{lemma}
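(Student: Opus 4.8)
The plan is to exploit the defining property of $\sigma \in \Aut_k^G E$, namely that $\pi_1 \circ \sigma = \pi_2$ on $E$, together with the fact that $\sigma$ is an \emph{automorphism} of $E$, hence sends irreducible components to irreducible components and preserves bidegrees of components (since $\sigma$ extends to an automorphism of $\PP^1 \times \PP^1$ of a controlled type, or at least preserves the numerical class of each component because $E$ is a fixed curve and $\sigma$ permutes its components bijectively). Recall that a projective variety $\ell$ of bidegree $(0,1)$ is a ``horizontal'' line $\PP^1 \times \{q_0\}$ for some $q_0 \in \PP^1$, while a variety $\ell'$ of bidegree $(1,0)$ is a ``vertical'' line $\{p_0\} \times \PP^1$.

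First I would prove $(1) \Rightarrow (2)$. Suppose $\ell = \PP^1 \times \{q_0\} \subseteq E$. Consider $\sigma(\ell) \subseteq E$. Since $\sigma$ is an automorphism of the curve $E$, $\sigma(\ell)$ is again an irreducible component of $E$, and it is a rational curve of the same bidegree as $\ell$, hence $\sigma(\ell)$ is again of bidegree $(0,1)$, say $\sigma(\ell) = \PP^1 \times \{q_1\}$. Now for any point $(p, q_0) \in \ell$ we have $\sigma(p,q_0) = (q_0, r)$ for some $r$, because $\pi_1 \circ \sigma = \pi_2$ forces the first coordinate of $\sigma(p,q_0)$ to equal $q_0$. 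But $\sigma(p,q_0) \in \sigma(\ell) = \PP^1 \times \{q_1\}$, so the second coordinate is $q_1$; thus $\sigma(p,q_0) = (q_0, q_1)$ \emph{independently of $p$}. Since $\sigma$ is injective and $\ell$ is infinite, this is a contradiction \emph{unless} $\ell$ is not all mapped to a single point --- so in fact this shows the naive guess fails, and instead one should look at where $\sigma^{-1}$ sends things, or examine $\pi_1(\ell)$. The cleaner route: apply $\pi_1 \circ \sigma = \pi_2$ to $\ell$. We get $\pi_1(\sigma(\ell)) = \pi_2(\ell) = \{q_0\}$, so $\sigma(\ell) \subseteq \{q_0\} \times \PP^1$. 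Since $\sigma(\ell)$ is a component of $E$ of bidegree $(0,1)$ and it is contained in a vertical line, the only possibility is that $\sigma(\ell)$ actually \emph{is} a component of bidegree $(1,0)$ after all --- which forces me to be careful about whether $\sigma$ preserves bidegree. The honest statement is: $\sigma(\ell) \subseteq \{q_0\}\times\PP^1$ is an irreducible closed subvariety of $E$, and since it has dimension $1$ (as the image of a curve under an automorphism), it must equal $\{q_0\} \times \PP^1$, which is therefore contained in $E$ and has bidegree $(1,0)$. This gives $(2)$ with $\ell' = \{q_0\} \times \PP^1$.

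For the converse $(2) \Rightarrow (1)$, I would run the symmetric argument using $\sigma^{-1}$ instead of $\sigma$: since $\sigma \in \Aut_k^G E$, one checks that $\sigma^{-1}$ satisfies $\pi_2 \circ \sigma^{-1} = \pi_1$ on $E$ (this follows formally from $\pi_1 \circ \sigma = \pi_2$ by substituting $\sigma^{-1}$), and then if $\ell' = \{p_0\} \times \PP^1 \subseteq E$, applying $\pi_2 \circ \sigma^{-1} = \pi_1$ gives $\pi_2(\sigma^{-1}(\ell')) = \pi_1(\ell') = \{p_0\}$, so $\sigma^{-1}(\ell') \subseteq \PP^1 \times \{p_0\}$, and by the dimension argument $\sigma^{-1}(\ell') = \PP^1 \times \{p_0\} \subseteq E$, which has bidegree $(0,1)$.

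The main obstacle I anticipate is the bookkeeping around $\sigma$ being only an automorphism of the (possibly reducible, possibly non-reduced) curve $E$ rather than of $\PP^1 \times \PP^1$: I need that the image of a one-dimensional subvariety under $\sigma$ is again one-dimensional (clear, since $\sigma$ is a homeomorphism), and that a one-dimensional irreducible subvariety of $E$ contained in a line $\{q_0\} \times \PP^1$ must be all of that line (true because $\{q_0\}\times\PP^1 \cong \PP^1$ is irreducible of dimension $1$, so its only one-dimensional closed subvariety is itself). A secondary subtlety is establishing $\pi_2 \circ \sigma^{-1} = \pi_1$ on $E$ rigorously from the hypothesis $\pi_1 \circ \sigma = \pi_2$; this is a short diagram chase: for $(p,q) \in E$ write $(p,q) = \sigma(p',q')$ with $(p',q') \in E$; then $p = \pi_1(\sigma(p',q')) = \pi_2(p',q') = q'$, so $\sigma^{-1}(p,q) = (p',q') = (p', p)$ and $\pi_2(\sigma^{-1}(p,q)) = p = \pi_1(p,q)$, as desired. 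Everything else is immediate.
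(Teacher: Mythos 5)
Your argument is correct and is essentially the paper's own proof: in both, one observes that $\pi_1\circ\sigma=\pi_2$ forces $\sigma(\PP^1\times\{q_0\})\subseteq\{q_0\}\times\PP^1$ (and dually for $\sigma^{-1}$), and then concludes equality because the image cannot be a proper closed subvariety of a line --- you phrase this via dimension, the paper via injectivity forcing the image to be infinite, which is the same point. The false start about $\sigma$ preserving bidegree can simply be deleted; the ``cleaner route'' you settle on is exactly what the paper does.
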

%%%%%

\begin{proof}
%${\rm (1)} \Longrightarrow {\rm (2)}$: 
%Assume that $E$ contains a projective variety $E_1$ of bidegree $(0,1)$.
Let $\ell=\PP^1 \times \{P\}$ be a projective variety of bidegree $(0,1)$ such that $\ell \subset E$.
Let $\sigma \in \Aut_{k}^G E$. Then $\sigma(\ell) \subset \{P\} \times \PP^1$.
Since $\{P\} \times \PP^1 \cong \PP^1$, if $\sigma(\ell) \neq \{P\} \times \PP^1$, then
$\sigma(\ell)$ is isomorphic to a proper subvariety of $\PP^1$.
This means that $\sigma(\ell)$ is a finite set. 
But, since $\sigma$ is injective, $\sigma(\ell)$ is not a finite set,
so this contradicts.
%%	

%${\rm (2)} \Longrightarrow {\rm (1)}$: 
Conversly, 
%assume that $E$ contains a projective variety $E_1$ of bidegree $(1,0)$.
let $\ell'=\{P\} \times \PP^1$ be a projective variety of bidegree $(1,0)$ such that $\ell' \subset E$.
Let $\sigma \in \Aut_{k}^G E$. Then $\sigma^{-1}(\ell') \subset \PP^1 \times \{P\}$.
Since $\PP^1 \times \{P\} \cong \PP^1$, if $\sigma^{-1}(\ell') \neq \PP^1 \times \{P\}$, then
$\sigma^{-1}(\ell')$ is isomorphic to a proper subvariety of $\PP^1$.
This means that $\sigma^{-1}(\ell')$ is a finite set.
However, since $\sigma^{-1}$ is injective, $\sigma^{-1}(\ell')$ is not a finite set,
so this contradicts.
\end{proof}
%%%

%%%
\begin{remark}
Lemma \ref{lem.subvar} tells us that 
every curve of bidegree $(2,2)$ in $\PP^1 \times \PP^1$ does not appear
as the point scheme of a $3$-dimensional cubic AS-regular algebra; 
suppose that $E$ is one of the following in  $\PP^1 \times \PP^1$: 
%%%%
\begin{center}
$
\begin{tikzpicture}[scale=0.4]
\draw (1.5,-1)--(1.5,1);
\draw (0,-1)--(0,1);
\draw (-0.5,0)--(2,0);
\draw (-0.5,0.05)--(2,0.05);
\end{tikzpicture}
\quad
%%%%%
\begin{tikzpicture}[scale=0.3]
\draw[samples=150, domain=-1.5:1.5]plot(\x,{(\x * \x)});
\draw (-1.5,0)--(1.5,0);
\end{tikzpicture}
\quad
%%%
\begin{tikzpicture}[scale=0.3]
\draw[samples=150, domain=-1.5:1.5]plot(\x,{(\x * \x) - 0.5});
\draw (-1.5,0)--(1.5,0);
\end{tikzpicture}
$
%%%
\end{center}
%Then, 
By Lemma \ref{lem.subvar},  $\Aut_k^G E = \emptyset$. 
%So, 
Considering defining relations of 3-dimensional cubic 
AS-regular algebras from the view of a geometric algebra, 
we exclude the cases when $E$ is one of the above figures.
\end{remark}
%%%
The aim of this paper is to give the complete list of defining relations 
of $3$-dimensional cubic AS-regular algebras whose point schemes are not integral.
In this paper, we define the types of the point scheme $E$ of $3$-dimensional cubic AS-regular algebras as follows:
%A geometric pair $(E,\sigma)$ is called {\it regular} if $(E,\sigma)=\cP(A)$ for some $3$-dimensional cubic AS-regular algebra $A$.
%Theorem \ref{thm.ATV} shows that the classification of $3$-dimensional cubic AS-regular algebras reduces to the classification of
%regular geometric pairs.
%In the previous paper \cite{MaS}, the types of geometric pairs are defined.
%We extend the types as follows
%(since $\Aut_k \PP^{n-1} \cong \PGL_n(k)$, we often identify $\sigma \in \Aut_k \PP^{n-1}$ with the representing matrix $\sigma \in \PGL_n(k)$):
%%%%%%%%%%%%%%%%
%\begin{table}%[htp]
%\vspace{-10pt}
\begin{center}
\begin{longtable}{|c|l|c|}
\hline Type & E & Figures\\ \hline\hline
{\rm Type P} & $E$ is $\PP^1 \times \PP^1$ & ------ \\ \hline
{\rm Type S} & $E$ consists of two conics in general position. 
&
\begin{tikzpicture}[scale=0.2, rotate=135]
\draw[samples=150, domain=-0.5:1]plot(\x,{sqrt(\x+0.5)});
\draw[samples=150, domain=-0.5:1]plot(\x,-{sqrt(\x+0.5)});
\draw[samples=150, domain=-1:0.5]plot(\x,{sqrt(0.5 - \x) -0.05});
\draw[samples=150, domain=-1:0.5]plot(\x,-{sqrt(0.5 - \x) +0.05});
\end{tikzpicture}
\\ \hline
{\rm Type T} &  $E$ consists of two tangent conics.
&
\begin{tikzpicture}[scale=0.2, rotate=135]
\draw[samples=150, domain=0:1.5]plot(\x,{sqrt(\x)});
\draw[samples=150, domain=0:1.5]plot(\x,-{sqrt(\x)});
\draw[samples=150, domain=-1.5:0]plot(\x,{sqrt(-\x)});
\draw[samples=150, domain=-1.5:0]plot(\x,-{sqrt(-\x)});
\end{tikzpicture}
\\ \hline
{\rm Type S$'$} & 
$E$ consists of a conic and two lines in a triangle.
&
\begin{tikzpicture}[scale=0.3]
\draw (0,-0.5)--(0,1.5);
\draw (-0.5,0)--(1.5,0);
\draw (-0.3,1.3)--(1.3,-0.3);
\end{tikzpicture}
\\ \hline
{\rm Type T$'$} &  
\begin{minipage}{21em}
$E$ consists of a conic and two lines intersecting in one point.
\end{minipage}
&
\begin{tikzpicture}[scale=0.3]
[baseline=-0.5cm]
\draw (0,-0.5)--(0,1.5);
\draw (-0.5,0)--(1.5,0);
\draw (-1,1)--(1,-1);
\end{tikzpicture}
\\ \hline
{\rm Type FL} &  $E$ is a quadrangle.
&
\begin{tikzpicture}[scale=0.3]
\draw (0,-0.5)--(0,1.5);
\draw (-0.5,0)--(1.5,0);
\draw (1,-0.5)--(1,1.5);
\draw (-0.5,1)--(1.5,1);
\end{tikzpicture}
\\ \hline
{\rm Type WL} &  $E$ is a double conic. 
&
\begin{tikzpicture}[scale=0.3, rotate=135]
\draw (0,-0.5)--(0,1.5);
%\draw (-0.5,0)--(1.5,0);
\draw (0.1,-0.5)--(0.1,1.5);
%\draw (-0.5,1)--(1.5,1);
\end{tikzpicture}
\\ \hline
{\rm Type TWL} & $E$ consists of two double lines.
&
\begin{tikzpicture}[scale=0.2]
\draw (-1,0)--(1,0);
\draw (-1,0.05)--(1,0.05);
\draw (0,-1)--(0,1);
\draw (0.05,-1)--(0.05,1);
\end{tikzpicture}
\\ 
\hline
\end{longtable}
\end{center}
%\label{default}
%\end{table}%
%%%%%%%%%%%%%%%
%\vspace{-30pt}
%%%%%
\begin{remark}\label{Rem-MaS}
In \cite{MaS}, for $3$-dimensional cubic AS-regular algebras of Type P, S and T,
the second and third authors gave the complete list of defining relations 
and classified them up to graded algebra isomorphism and graded Morita equivalence. 
\end{remark}
%%%%%%
%%%%%%%%%%%%%%%%%%%%%%%%%%%%%%%%%%%%%%%
%%%%%%%%%%%%%%%%%%%%%%%%%%%%%%%%%%%%%%%
\subsection{Type WL and TWL. }\label{subsec.NR}
%%The case when $E$ is not reduced is divided into two cases as follows.
When $E$ is of Type WL or TWL, $E$ is not reduced.
%%%%
\begin{lemma}[{\rm \cite[Lemma 8.19]{ATV2}}]\label{lem.nonreduced}
Let $A=\cA(E,\sigma)$ be a $3$-dimensional cubic AS-regular algebra 
where $E$ is a bidegree $(2,2)$ curve of $\PP^1 \times \PP^1$ 
such that $E$ is not reduced.
 Then $E=2C$, where $C$ is an irreducible curve of bidegree $(1,1)$, or else $C=(\{p\} \times \PP^1) \cup (\PP^1 \times \{p\})$ for some element $p \in \PP^1$.
\end{lemma}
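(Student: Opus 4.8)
The plan is to combine a bookkeeping of the non-reduced divisors of bidegree $(2,2)$ on $\PP^1 \times \PP^1$ with the strong constraints imposed on $E$ by the existence of an automorphism $\sigma \in \Aut_k^G E$. Regard $E$ as an effective divisor of bidegree $(2,2)$ and set $C := E_{\mathrm{red}}$. Since $E$ has bidegree $(2,2)$, every component of $C$ has bidegree $(a,b)$ with $a,b \le 2$, and $\deg C \le (2,2)$ coordinatewise; moreover, if $\deg C = (2,2)$ then $E$ is reduced (the multiplicities can only be $1$). Hence, if $E$ is not reduced, $\deg C \in \{(1,1),(2,1),(1,2)\}$, and a short case check on the possible configurations shows that one of the following holds:
\begin{enumerate}[{\rm (a)}]
\item $E = 2C$ with $C$ an irreducible curve of bidegree $(1,1)$;
\item $E = 2C$ with $C = (\{p\} \times \PP^1) \cup (\PP^1 \times \{q\})$, the union of a line of bidegree $(1,0)$ and a line of bidegree $(0,1)$;
\item $C = L_1 \cup L_2 \cup M$ with $L_1 \neq L_2$ of bidegree $(1,0)$, $M$ of bidegree $(0,1)$, and $E = L_1 + L_2 + 2M$;
\item the mirror of {\rm (c)}, obtained by interchanging the two factors of $\PP^1 \times \PP^1$.
\end{enumerate}
Case {\rm (a)} is already the first alternative of the statement, so the remaining work is to prove $p = q$ in {\rm (b)} and to rule out {\rm (c)} and {\rm (d)}.

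The only tool needed is the observation behind the proof of \lemref{lem.subvar}: $\sigma$ induces an automorphism of $C = E_{\mathrm{red}}$ with $\pi_1 \circ \sigma = \pi_2$ on $C$, and, since $\sigma$ is bijective, it sends an irreducible curve $D \subseteq C$ to an irreducible curve $\sigma(D) \subseteq C$. This first yields a sharpening of \lemref{lem.subvar}: if $\PP^1 \times \{b\} \subseteq C$, then for a point $(x,b)$ on this line $\pi_1(\sigma(x,b)) = \pi_2(x,b) = b$, so $\sigma(\PP^1 \times \{b\}) \subseteq \{b\} \times \PP^1$; as the left-hand side is a curve, this inclusion is an equality, and hence $\{b\} \times \PP^1 \subseteq C$. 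Applying this in case {\rm (b)}: $C$ contains $\PP^1 \times \{q\}$, hence also $\{q\} \times \PP^1$, but $\{p\} \times \PP^1$ is the only bidegree $(1,0)$ component of $C$, so $\{q\} \times \PP^1 = \{p\} \times \PP^1$, i.e. $p = q$, which is exactly the second alternative.

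To rule out {\rm (c)}, write $L_i = \{a_i\} \times \PP^1$ and $M = \PP^1 \times \{b\}$. For each $i$ the irreducible curve $\sigma(L_i) \subseteq C$ satisfies $\pi_1(\sigma(L_i)) = \pi_2(L_i) = \PP^1$, whereas $\pi_1$ collapses $L_1$ and $L_2$ to points; since $\sigma(L_i)$, being irreducible, lies in a single component of $C$, it must lie in $M$, and being a curve it equals $M$. Thus $\sigma(L_1) = \sigma(L_2) = M$ although $L_1 \neq L_2$, contradicting injectivity of $\sigma$. Case {\rm (d)} is excluded by the same argument applied to $\sigma^{-1}$, which satisfies $\pi_2 \circ \sigma^{-1} = \pi_1$, with the two factors interchanged.

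The bidegree bookkeeping in the first paragraph is routine, and in practice one may instead read the non-reduced bidegree $(2,2)$ curves directly off \cite[Table 3]{B} and match them against {\rm (a)}--{\rm (d)}. The only point deserving a little care is that $\sigma$, a priori an automorphism of the possibly non-reduced point scheme $E$, induces an automorphism of the reduced curve $C = E_{\mathrm{red}}$ compatible with $\pi_1$ and $\pi_2$; once this is granted, each remaining step is a one-line image computation of exactly the kind already carried out in the proof of \lemref{lem.subvar}, so I do not anticipate a genuine obstacle.
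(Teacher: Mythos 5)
Your argument is correct, but note that the paper does not prove this lemma at all: it is quoted verbatim from \cite[Lemma 8.19]{ATV2}, where it is established inside Artin--Tate--Van den Bergh's analysis of point schemes of regular algebras. What you supply is therefore a genuinely different (and more elementary) route: a complete enumeration of the non-reduced effective divisors of bidegree $(2,2)$ on $\PP^1\times\PP^1$ into your cases (a)--(d), followed by the single observation that $\pi_1\circ\sigma=\pi_2$ forces $\sigma$ to carry a horizontal line $\PP^1\times\{b\}$ onto the vertical line $\{b\}\times\PP^1$ (and collapses every vertical line into the unique component with full first projection). This is exactly the mechanism used in the paper's proof of \lemref{lem.subvar}, sharpened to keep track of the base point $b$; your use of it to force $p=q$ in case (b) and to contradict injectivity of $\sigma$ (resp.\ $\sigma^{-1}$) in cases (c) and (d) is sound, since $\sigma(L_1)$ and $\sigma(L_2)$ are disjoint irreducible closed infinite subsets of $M\cong\PP^1$ and hence cannot both equal $M$. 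The trade-off is that your proof uses only the existence of some $\sigma\in\Aut_k^G E$ on the reduction $E_{\mathrm{red}}$ and none of the AS-regularity of $A$, which makes it both more self-contained and slightly more general than the cited source; the one point you flag --- that an automorphism of the possibly non-reduced scheme $E$ induces one of $E_{\mathrm{red}}$ compatible with the projections --- is indeed the only place where care is needed, and it holds because reduction is functorial.
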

%%%%

%Let $A=\bigoplus_{i \in \NN} A_i$ be a connected graded algebra.
We recall a notion of a twisted algebra $A^{\varphi}$ of a connected graded algebra $A$ by a graded algebra automorphism $\varphi \in \GrAut_k A$, which is formularized by Zhang \cite{Z}.
For $\varphi \in \GrAut_k A$, 
a new graded and associative multiplication $\ast$ on the underlying graded %$k$-
vector space $A=\bigoplus_{i \in \NN} A_i$ is defined 
by $a \ast b:=a\varphi^n(b)$ for %any 
$m,n \in \NN$, %and 
$a \in A_n, b \in A_m$. 
The graded algebra $(A,\ast)$ is called the {\it twisted algebra} of $A$ by $\varphi$, denoted by $A^{\varphi}$. 

%To prove Theorem \ref{thm.isom} and Theorem \ref{thm.grmod} when $E$ is not reduced, we use the following key lemma.
\begin{lemma}[{\rm \cite[Theorems 8.20, 8.29]{ATV2}}]\label{lem.NR}
\begin{enumerate}[{\rm (1)}]
\item Let $A$ be a $3$-dimensional cubic AS-regular algebra of Type WL. 
Then there exists $\varphi \in \GrAut_k A$ such that
%\vspace{-10pt}
$A^{\varphi} \cong B:=k \langle x,y \rangle/(xy^2-2yxy+y^2x, x^2y-2xyx+yx^2)
%\text{ as graded algebras.}
$. 
\item Let $A$ be a $3$-dimensional cubic AS-regular algebra of Type TWL. 
Then $A \cong k \langle x,y \rangle/(xy^2+y^2x, x^2y+yx^2+y^3)
%\text{ as graded algebras.}
$. 
\end{enumerate}
\end{lemma}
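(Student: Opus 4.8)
The starting point is \lemref{lem.nonreduced}: in both cases $E = 2C$ for a curve $C$ of bidegree $(1,1)$, which is irreducible --- hence a smooth rational curve, the graph of some $\alpha \in \Aut_k \PP^1$ --- when $E$ is of Type WL, and equals $C = (\{p\} \times \PP^1) \cup (\PP^1 \times \{p\})$ for some $p \in \PP^1$ when $E$ is of Type TWL. Next, by \thmref{thm.TSP1}(2) I would write $A = \cD(\om)$ for a twisted superpotential $\om \in V^{\otimes 4}$ with $\dim_k V = 2$, unique up to a nonzero scalar; by the Artin--Tate--Van den Bergh criterion (\cite[Theorem~1]{ATV1}), since $A$ is AS-regular the defining relations are the two entries of $\bM(\om)\bx$, where $\bM(\om)=\left(\partial_{x_i}\om\partial_{x_j}\right)_{i,j}$, and the point scheme $E$ is cut out in $\PP^1 \times \PP^1$ by the bidegree-$(2,2)$ form $\det \bM(\om)$. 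The plan is then to impose the condition that $\det\bM(\om)$ define $2C$, to solve for $\om$ modulo the $\GL_2(k)$-action $\om \mapsto \tau^{\otimes 4}(\om)$ on $V$ (which only moves $A$ within its graded-isomorphism class) and modulo nonzero scalars, and to read off the algebra.

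For Type WL, I would first use the $\GL_2(k)$-freedom to normalize the reduced curve: since $C$ is the graph of $\alpha$ and any $\sigma \in \Aut_k^G E$ must intertwine the two projections along $C$, the relations can be brought to a form in which $C$ is the diagonal $\{((s:t),(s:t)) \mid (s:t) \in \PP^1\}$, so that $\det\bM(\om)$ becomes a scalar times the square of the $(1,1)$-form defining the diagonal. What should survive is a one-parameter family of twisted superpotentials, the parameter recording the first-order (nilpotent) structure of the thickening $2C$ along $C$ together with the part of $\sigma$ acting on it. The crucial claim is that this family is a single orbit under MS-twists --- equivalently, that the corresponding algebras form one Zhang-twist class \cite{Z}, via $\Aut(\om) \subseteq \GrAut_k \cD(\om)$ and \lemref{prop.MStwist} --- so that it suffices to exhibit one member, namely $B = k\langle x,y\rangle/(xy^2 - 2yxy + y^2x,\ x^2y - 2xyx + yx^2)$. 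I would then check directly that $B$ is $3$-dimensional cubic AS-regular (it is standard and its matrix $\bM$ has empty common zero locus; in fact $B = U(\fh)$, the enveloping algebra of the $3$-dimensional Heisenberg Lie algebra) and that $\det\bM$ for $B$ is a scalar times the square of a $(1,1)$-form, so that $B$ is indeed of Type WL. Since $A$ would then lie in the Zhang-twist class of $B$, there is $\varphi \in \GrAut_k A$ with $A^\varphi \cong B$.

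For Type TWL, after using the $\GL_2(k)$-freedom to normalize $p = (1:0)$, so that $C = (\{(1:0)\} \times \PP^1) \cup (\PP^1 \times \{(1:0)\})$, I would write out the $2\times 2$ matrix $\bM(\om)$ of quadratic forms and impose that $\det\bM(\om)$ be a scalar multiple of the square of the $(1,1)$-form vanishing on $C$. Here the compatibility of $\sigma$ with both the vertical and the horizontal axis through $(p,p)$ should be rigid enough that, after the residual coordinate changes on $V$, a single twisted superpotential survives, whose derivation-quotient algebra is $k\langle x,y\rangle/(xy^2 + y^2x,\ x^2y + yx^2 + y^3)$; one then verifies via \propref{prop.Stand} and the Artin--Tate--Van den Bergh criterion that it is $3$-dimensional cubic AS-regular. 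No twist is needed in this case, since the classification already yields a unique algebra up to isomorphism.

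The hard part will be the non-reducedness of $E$: the dictionary between algebras and geometric data is cleanest when the point scheme is reduced, while here one must keep track of the length-$2$ thickening $2C$ and of the action of $\sigma$ on it, not merely on $C_{\mathrm{red}}$. The delicate step is showing that the parameters describing the thickening together with the ``unipotent part'' of $\sigma$ collapse --- to a single Zhang-twist class in the WL case, and to nothing in the TWL case --- which is exactly the content of \cite[Theorems~8.20,~8.29]{ATV2}; everything else is normalization of coordinates and routine verification of the AS-regularity criterion.
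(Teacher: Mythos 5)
The paper does not prove \lemref{lem.NR} at all: it is quoted verbatim from \cite[Theorems~8.20, 8.29]{ATV2}, so the only ``proof'' to compare against is that citation. Judged as a free-standing argument, your proposal has a genuine gap at its core. For Type WL, everything rests on what you yourself call ``the crucial claim'' --- that the twisted superpotentials $\om$ with $\det\bM(\om)$ a scalar multiple of the square of a single $(1,1)$-form form one Zhang-twist (MS-twist) class containing $B$ --- and you give no argument for it; you then concede that this collapsing ``is exactly the content of \cite[Theorems~8.20, 8.29]{ATV2}.'' The same happens in the TWL case, where the uniqueness of the surviving superpotential is asserted rather than derived. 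So the one step that is not routine normalization or verification is deferred back to the very result being proved, and the proposal is circular. (The scaffolding around it is sound: the relations are indeed the entries of $\bM(\om)\bx$, $B\cong U(\fh)$, and $\det\bM(\om_B)=2(x_1y_2-y_1x_2)^2$ does confirm that $B$ is of Type WL.)

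There is also a concrete error in your normalization step for Type WL. The action $\om\mapsto\tau^{\otimes 4}(\om)$ stays within the graded-isomorphism class, but on the reduced point scheme it acts as $\rho\times\rho$, sending the graph $C_{\alpha}$ to $C_{\rho\alpha\rho^{-1}}$; it can only \emph{conjugate} $\alpha$, not make it the identity. Theorem~\ref{thm.WL} together with the proof of Lemma~\ref{lem.WL2} exhibits Type WL algebras whose reduced point scheme is $C_{\overline{\varphi^{\ast}}}$ for a nontrivial diagonal or unipotent $\varphi$, so ``$C$ is the diagonal'' is not attainable inside a graded-isomorphism class. To diagonalize $C$ you need the weaker equivalence $\tau_1\times\tau_2$ of Definition~\ref{def.equiv}(1), i.e.\ you must work up to graded Morita equivalence from the outset. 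That is compatible with the Zhang-twist form of the conclusion, but it is not what you wrote, and it changes which residual coordinate freedom remains when you analyze the ``one-parameter family'' --- precisely the place where the unproved crucial claim lives.
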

%\vspace{-5pt}
%%%%%%%

By Lemma \ref{lem.NR} (2), Type TWL algebra is only one up to graded algebra isomorphism. 

By Lemma \ref{lem.NR} (1) and \cite[Theorem 3.5]{Z}, every Type WL algebra is
graded Morita equivalent to 
$B=k \langle x,y \rangle/(xy^2-2yxy+y^2x, x^2y-2xyx+yx^2)$.
By \cite[Proposition 2.5 (2)]{Z}, $C=A^{\varphi}$ if and only if $A=C^{\varphi^{-1}}$. Thus Lemma \ref{lem.NR} (1) tells us that every Type WL algebra is isomorphic 
to the twisted algebra of $B$ by $\varphi \in \GrAut_k B$.
This means that, to classify Type WL algebras up to graded algebra isomorphism,
it is enough to classify twisted algebras of $B$ by $\varphi \in \GrAut_k B$ up to graded algebra isomorphism.
Note that $B$ is the derivation-quotient algebra $\cD(\om_B)$ where
%\vspace{-5pt}
$$\om_B:=x^2y^2+xy^2x+y^2x^2+yx^2y-2xyxy-2yxyx, 
%\vspace{-5pt}
$$
and $\Aut(\om_B)$ is a subset of $\GrAut_k \cD(\om_B)$.
By direct calculations, $\Aut(\om_B)=\GL_2(k)$.
%Also, 
By \cite[Proposition 5.2 (3)]{MoS1}, 
for any $\varphi \in \Aut(\om_B)$, 
$\cD(\om_B)^{\varphi} \cong \cD(\om_B^{\varphi})$ 
as graded algebras 
where $\om_B^{\varphi}$ is the MS-twist of $\om_B$ by $\varphi$.

%%%
\begin{lemma}\label{lem.WL1}
Let $\varphi,\,\psi \in \Aut(\om_B)=\GL_2(k)$. 
Then $\cD(\om_B^{\varphi}) \cong \cD\left(\om_B^{\psi^{-1}\varphi\psi}\right)$
as graded algebras. 
\end{lemma}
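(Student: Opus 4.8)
The plan is to pass from the twisted superpotentials to the associated derivation-quotient algebras via the twist machinery already recorded in the excerpt, and then exhibit an explicit graded algebra isomorphism coming from the linear change of variables $\psi$. First I would observe that, by \cite[Proposition 5.2 (3)]{MoS1} as quoted just before the statement, $\cD(\om_B^{\varphi}) \cong \cD(\om_B)^{\varphi}$ and $\cD(\om_B^{\psi^{-1}\varphi\psi}) \cong \cD(\om_B)^{\psi^{-1}\varphi\psi}$ as graded algebras. So it suffices to show $\cD(\om_B)^{\varphi} \cong \cD(\om_B)^{\psi^{-1}\varphi\psi}$, i.e. that twisting $B = \cD(\om_B)$ by $\varphi$ and by the conjugate $\psi^{-1}\varphi\psi$ gives isomorphic graded algebras. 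Here I am using that $\Aut(\om_B) = \GL_2(k)$, so every $\varphi, \psi$ in sight is a genuine graded automorphism of $B$ (via the inclusion $\Aut(\om_B) \subset \GrAut_k \cD(\om_B)$), and that conjugates of automorphisms are automorphisms.

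The key step is the general fact about Zhang twists: for a connected graded algebra $C$ and $\varphi, \psi \in \GrAut_k C$, one has $C^{\psi^{-1}\varphi\psi} \cong (C^{\varphi})^{?}$ — more precisely, $\psi$ itself furnishes a graded algebra isomorphism $C^{\psi^{-1}\varphi\psi} \xrightarrow{\sim} C^{\varphi}$. Concretely, write $\ast_\eta$ for the twisted multiplication $a \ast_\eta b = a\,\eta^{\deg a}(b)$ attached to $\eta \in \GrAut_k C$. I would check directly that the underlying-space map $\psi \colon C \to C$ intertwines $\ast_{\psi^{-1}\varphi\psi}$ on the source with $\ast_\varphi$ on the target: for $a \in C_n$, $b \in C_m$,
\[
\psi(a \ast_{\psi^{-1}\varphi\psi} b) = \psi\bigl(a\,(\psi^{-1}\varphi^n\psi)(b)\bigr) = \psi(a)\,\varphi^n(\psi(b)) = \psi(a) \ast_\varphi \psi(b),
\]
using multiplicativity of $\psi$ and $\psi\psi^{-1} = \id$. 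Since $\psi$ is a degree-preserving bijection, this is an isomorphism of graded algebras $C^{\psi^{-1}\varphi\psi} \cong C^{\varphi}$. Applying this with $C = B = \cD(\om_B)$ and combining with the two instances of \cite[Proposition 5.2 (3)]{MoS1} above yields $\cD(\om_B^{\varphi}) \cong B^{\varphi} \cong B^{\psi^{-1}\varphi\psi} \cong \cD(\om_B^{\psi^{-1}\varphi\psi})$, which is the claim.

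There is not much of an obstacle here; the only point that needs a little care is bookkeeping on which algebra carries which twisted product, and making sure the exponent $\eta^{\deg a}$ lands correctly so that the intertwining identity closes up exactly (a common off-by-one in the conventions for Zhang twists). If one prefers to avoid re-deriving the twist identity, an alternative is to cite \cite[Theorem 3.5]{Z} together with \cite[Proposition 2.5 (2)]{Z} — both already invoked in the excerpt — which give that conjugate twisting systems produce isomorphic twisted algebras; the hands-on verification above is included mainly because it is short and self-contained. Either way, the lemma follows.
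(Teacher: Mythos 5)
Your proof is correct, but it takes a genuinely different route from the paper's. You work at the level of algebras: you invoke \cite[Proposition 5.2 (3)]{MoS1} twice to replace both derivation-quotient algebras by Zhang twists of $B$, and then verify the identity $\psi(a \ast_{\psi^{-1}\varphi\psi} b)=\psi(a)\ast_{\varphi}\psi(b)$ (correctly computed, with the degree exponent landing where it should) to show that conjugate automorphisms yield isomorphic twists. The paper instead works at the level of superpotentials: it computes directly that $\psi^{\otimes 4}\bigl(\om_B^{\psi^{-1}\varphi\psi}\bigr)=\l\,\om_B^{\varphi}$, where $\l$ is the scalar with $\psi^{\otimes 4}(\om_B)=\l\om_B$, and then applies \cite[Lemma 2.10]{MU1}, which says that a linear isomorphism carrying one potential to a scalar multiple of another extends to an isomorphism of the derivation-quotient algebras. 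Your argument leans on the Zhang-twist machinery and needs $\psi^{-1}\varphi\psi\in\Aut(\om_B)$ so that Proposition 5.2 (3) applies to it as well (which holds here since $\Aut(\om_B)=\GL_2(k)$ is a group), whereas the paper's argument bypasses twisted algebras entirely and only uses $\psi\in\Aut(\om_B)$. Both are short and complete; the one caveat in yours is that the suggested alternative of citing \cite[Theorem 3.5]{Z} together with \cite[Proposition 2.5 (2)]{Z} would not by itself deliver the conjugation statement (those results concern graded Morita equivalence and inversion of twists, not conjugation), but since you carry out the hands-on verification, this does not affect the validity of the proof.
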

%%%

%%%%
\begin{proof}
Let $\varphi, \psi \in \Aut(\om_B)=\GL_2(k)$.
Then there exists $\l \in k\setminus \{0\}$ such that $\psi^{\otimes 4}(\om_B)=\l \om_B$.
The following equation holds: 
 %\vspace{-5pt}
	\begin{align*}
	\psi^{\otimes 4}(\om_B^{\psi^{-1}\varphi\psi})
	&=\psi^{\otimes 4}(((\psi^{-1}\varphi\psi)^3\otimes(\psi^{-1}\varphi\psi)^2\otimes(\psi^{-1}\varphi\psi)\otimes{\rm id})(\om_B))\\
	&=\psi^{\otimes 4}(((\psi^{-1}\varphi^3\psi)\otimes(\psi^{-1}\varphi^2\psi)\otimes(\psi^{-1}\varphi\psi)\otimes{\rm id})(\om_B))\\
	&=((\varphi^3\psi)\otimes(\varphi^2\psi)\otimes(\varphi\psi)\otimes \psi))(\om_B) \\
	& =(\varphi^3\otimes\varphi^2\otimes\varphi\otimes{\rm id})(\psi^{\otimes 4}(\om_B))
	=(\varphi^3\otimes\varphi^2\otimes\varphi\otimes{\rm id})(\l\om_B)\\
	&=\l(\varphi^3\otimes\varphi^2\otimes\varphi\otimes{\rm id})(\om_B)
	=\l\om_B^{\varphi}.
	\end{align*}
	By \cite[Lemma 2.10]{MU1}, $\psi$ extends to the isomorphism 
	$\cD\left(\om_B^{\psi^{-1}\varphi\psi}\right) \to \cD(\om_B^{\varphi})$ of graded algebras.
\end{proof}

%%%%%
By \cite[Lemma 4.4]{MaS}, $C$ can be written as 
	$C=C_{\tau}:=\{(p,\tau(p)) \in \PP^1 \times \PP^1 \mid p \in \PP^1\}$ 
for some $\tau \in \Aut_k \PP^1$.

%%%
\begin{lemma}\label{lem.WL2}
Let $\varphi, \psi \in \Aut(\om_B)=\GL_2(k)$. 
Then $\cD(\om_B^{\varphi}) \cong \cD\left(\om_B^{\psi}\right)$ as graded algebras
if and only if $\overline{\varphi^{\ast}} \sim \overline{\psi^{\ast}}$ in $\PGL_2(k)$.
\end{lemma}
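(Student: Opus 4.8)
The plan is to translate the algebraic condition $\cD(\om_B^{\varphi}) \cong \cD(\om_B^{\psi})$ into a statement about the geometric pairs $\cP(\cD(\om_B^{\varphi}))$ and $\cP(\cD(\om_B^{\psi}))$ via Lemma \ref{thm.GA}(1), and then identify those geometric pairs explicitly. First I would recall that, since $E=C$ is a double conic with underlying reduced curve $C_{\mathrm{red}}=C_\tau$ for a suitable $\tau\in\Aut_k\PP^1$ (preceding remarks), and since $B=\cD(\om_B)$ has $\cP(B)=(C_\tau,\sigma)$ for the canonical $\sigma$, the MS-twist formula $\cD(\om_B)^{\varphi}\cong\cD(\om_B^{\varphi})$ together with the description of how twisting an algebra changes its geometric pair shows that $\cP(\cD(\om_B^{\varphi}))$ has underlying reduced curve $(\overline{\varphi^{\ast}}\times\overline{\varphi^{\ast}})$ acting appropriately — more precisely, twisting by $\varphi$ conjugates/translates $\sigma$ by $\overline{\varphi^{\ast}}\in\PGL_2(k)$ acting on $\PP(V^{\ast})=\PP^1$ (this is the cubic-algebra analogue of \cite[Proposition 5.2]{MoS1} combined with Lemma \ref{thm.GA}). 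So the first key step is to make the dictionary $\varphi\mapsto\overline{\varphi^{\ast}}$ precise at the level of geometric data.

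Next I would apply Lemma \ref{thm.GA}(1): $\cD(\om_B^{\varphi})\cong\cD(\om_B^{\psi})$ as graded algebras iff there is $\theta\in\Aut_k\PP^1$ with $(\theta\times\theta)(E_\varphi)=E_\psi$ intertwining the two automorphisms, where $E_\varphi,E_\psi$ are the (scheme-theoretic) point schemes. Because both point schemes are the double conic with reduced structure $C_\tau$ — which is preserved by $\theta\times\theta$ exactly when $\theta$ centralizes $\tau$ in a suitable sense — the nontrivial content of the commuting square reduces to an equation in $\PGL_2(k)$ relating $\theta$, $\overline{\varphi^{\ast}}$ and $\overline{\psi^{\ast}}$, namely $\theta\,\overline{\varphi^{\ast}}\,\theta^{-1}=\overline{\psi^{\ast}}$, i.e. $\overline{\varphi^{\ast}}\sim\overline{\psi^{\ast}}$ in $\PGL_2(k)$. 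The forward implication then drops out of Lemma \ref{thm.GA}(1), and the converse is supplied by Lemma \ref{lem.WL1}: if $\overline{\varphi^{\ast}}$ and $\overline{\psi^{\ast}}$ are conjugate in $\PGL_2(k)$, lift the conjugating element to some $\chi\in\GL_2(k)$, check that $\overline{(\chi^{-1}\varphi\chi)^{\ast}}=\overline{\psi^{\ast}}$, and observe that since $\Aut(\om_B)=\GL_2(k)$ the hypotheses of Lemma \ref{lem.WL1} are met, giving $\cD(\om_B^{\varphi})\cong\cD(\om_B^{\chi^{-1}\varphi\chi})$; it then remains only to see that $\overline{\varphi^{\ast}}$-conjugacy is detected already on the nose by $\cD$, which is where the reduced-structure argument closes the loop.

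I expect the main obstacle to be the bookkeeping in the first step: pinning down exactly how the passage from $\varphi\in\GL_2(V)$ to its action $\varphi^{\ast}$ on $V^{\ast}$, and then to the class $\overline{\varphi^{\ast}}\in\PGL_2(k)$, interacts with the MS-twist on the superpotential side versus the Zhang twist on the algebra side — in particular tracking which power of $\varphi$ appears on which tensor factor of $\om_B^{\varphi}$, and confirming that all the extra powers wash out projectively so that only $\overline{\varphi^{\ast}}$ survives in the geometric pair. Once that identification is clean, both implications are short: one is Lemma \ref{thm.GA}(1) read off directly, the other is Lemma \ref{lem.WL1} plus the lift of a conjugacy from $\PGL_2$ to $\GL_2$. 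A secondary point requiring a little care is that $\GrAut_k B$ may be strictly larger than $\Aut(\om_B)=\GL_2(k)$, so I would want to note that it suffices to work with twists by elements of $\Aut(\om_B)$ because every Type WL algebra already arises this way (as established in the discussion preceding Lemma \ref{lem.WL1}).
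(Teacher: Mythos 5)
Your proposal follows essentially the same route as the paper: identify the (G1)-geometric pair of $\cD(\om_B^{\varphi})$ as $(C_{\overline{\varphi^{\ast}}},\sigma_{\varphi})$, read off conjugacy of $\overline{\varphi^{\ast}}$ and $\overline{\psi^{\ast}}$ from the induced $2$-equivalence of point schemes, and supply the converse with Lemma \ref{lem.WL1} (plus the observation that $\psi$ and a conjugate of $\varphi$ agreeing up to scalar forces the MS-twists to agree up to scalar). The only caution is that Lemma \ref{thm.GA}(1) as stated requires both (G1) and (G2), while a Type WL algebra satisfies only (G1) for its reduced point scheme $C_{\overline{\varphi^{\ast}}}$ (a bidegree $(1,1)$ curve does not cut out the two-dimensional relation space); since you invoke it only in the direction ``isomorphism implies an intertwining $\tau$,'' which holds under (G1) alone, the argument survives, and this is precisely why the paper cites the (G1)-level statements of \cite{MaS} rather than Lemma \ref{thm.GA} itself.
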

%%%

%%%
\begin{proof}
%Let $\varphi, \psi \in \Aut(\om_B)=\GL_2(k)$.
Since $B=\cD(\om_B)=k \langle x,y \rangle/(xy^2-2yxy+y^2x, x^2y-2xyx+yx^2)$,  
it follows from direct calculation that
%$\cV(xy^2-2yxy+y^2x, x^2y-2xyx+\textcolor{red}{yx^2})
$\Gamma_{B}=\{(p,p,p) \mid p \in \PP^1\}$. 
This means that $B$ satisfies the condition (G1) in Definition \ref{def.GA}. 
Moreover, $\cP(B)=(C_{\rm id},{\rm id})$.
By \cite[Theorem 3.4 (1)]{MaS}, $\cD(\om_B^{\varphi})$ and $\cD(\om_B^{\psi})$ satisfy the condition (G1) in Definition \ref{def.GA}. 
We set 
$\cP(\cD(\om_B^{\varphi})):=(E_{\varphi},\sigma_{\varphi})$ 
and $\cP(\cD(\om_B^{\psi})):=(E_{\psi},\sigma_{\psi})$.
%We assume that $\cD(\om_B^{\varphi}) \cong \cD\left(\om_B^{\psi}\right)$.
By \cite[Theorem 3.5 (2)]{MaS}, 
we have $E_{\varphi} \sim C_{\rm id} \sim E_{\psi}$. Moreover, 
$E_{\varphi}=({\rm id} \times \overline{\varphi^{\ast}})(C_{\rm id})=C_{\overline{\varphi^{\ast}}}\, \,\,\text{ and }\,\,\,E_{\psi}=({\rm id} \times \overline{\psi^{\ast}})(C_{\rm id})=C_{\overline{\psi^{\ast}}}$. 
It follows from \cite[Lemma 4.7 (1)]{MaS} that $\cD(\om_B^{\varphi}) \cong \cD\left(\om_B^{\psi}\right)$
if and only if $\overline{\varphi^{\ast}} \sim \overline{\psi^{\ast}}$ in $\PGL_2(k)$.
\end{proof}
%%%%

%%%%
\begin{theorem}\label{thm.WL}
Every Type WL algebra is isomorphic %as graded algebras 
to one of the following graded algebras{\rm ;}
%\vspace{-10pt}
\begin{align*}
		& \text{\rm (1)}\,B_1:=
		\cD(\om_B^{\varphi_1})
		%\textcolor{blue}{
		=k \langle x,y \rangle/(\a^2xy^2+y^2x-2\a yxy, \a^2x^2y+yx^2-2\a xyx)
		%}
                  \\
	    &\hspace{15pt} \text{ where } \varphi_1:=
		\begin{pmatrix}
			1 & 0 \\
			0 & \a 
		\end{pmatrix} \quad (\a \in k\setminus \{0\}), \ \text{ or } \\
		&\text{\rm (2)}\,B_2:=\cD(\om_B^{\varphi_2})\\
		&\hspace{20pt} 
%\textcolor{blue}{
		=k \langle x,y \rangle/(xy^2+y^2x-2yxy,\,x^2y+yx^2-2xyx+4xy^2-4yxy+2y^3)
		%}
                 \\
		&\hspace{15pt} \text{ where } \varphi_2:=
		\begin{pmatrix}
			1 & 1 \\
			0 & 1
		\end{pmatrix}.
\end{align*}
%	\begin{enumerate}[{\rm (1)}]
%		\item $B_1:=\cD(\om_B^{\varphi_1})$ where 
%		$\varphi_1:=
%		\begin{pmatrix}
%			1 & 0 \\
%			0 & \a 
%		\end{pmatrix}$ \quad $(0 \neq \a \in k)$, or
%		\item $B_2:=\cD(\om_B^{\varphi_2})$ where $\varphi_2:=
%		\begin{pmatrix}
%			1 & 1 \\
%			0 & 1
%		\end{pmatrix}$.
%	\end{enumerate}
%	Moreover, $B_1=\cD(\om_B^{\varphi_1})$ and $B'_1=\cD(\om_B^{\varphi'_1})$ where $\varphi_1=
%	\begin{pmatrix}
%		1 & 0 \\
%		0 & \a 
%	\end{pmatrix}$ and $\varphi'_1=
%	\begin{pmatrix}
%		1 & 0 \\
%		0 & \a'
%	\end{pmatrix}$
%	\,\,\,$(0 \neq \a, \a' \in k)$ are isomorphic as graded algebras if and only if
%	$\a'=\a^{\pm 1}$.
\end{theorem}
%%%
%%%
\begin{proof}
Let $\varphi \in \Aut(\om_B)=\GL_2(k)$.
By Lemmas \ref{lem.WL1} and \ref{lem.WL2}, taking the Jordan canonical form of $\varphi$,
it follows that the graded algebra $\cD(\om_B^{\varphi})$ is isomorphic as graded algebras 
to only one of the two graded algebras $B_{1}$ and $B_{2}$ as in the statement.  
%Let $\varphi \in \Aut(\om_B)=\GL_2(k)$.
%By Lemmas \ref{lem.WL1} and \ref{lem.WL2}, taking the Jordan canonical form of $\varphi$,
%it follows that the graded algebra $\cD(\om_B^{\varphi})$ is isomorphic as graded algebras 
%to only one of the two graded algebras as in the statement.  
%\begin{align*}
%		&\text{\rm (i)}\,B_1:=\cD(\om_B^{\varphi_1}) \text{ where } \varphi_1:=
%		\begin{pmatrix}
%			1 & 0 \\
%			0 & \a 
%		\end{pmatrix} \quad (\a \in k\setminus \{0\}), \ \text{ or } \\
%		&\text{\rm (ii)}\,B_2:=\cD(\om_B^{\varphi_2}) \text{ where } \varphi_2:=
%		\begin{pmatrix}
%			1 & 1 \\
%			0 & 1
%		\end{pmatrix}.
%\end{align*}
%%	\begin{enumerate}[{\rm (1)}]
%%		\item $B_1:=\cD(\om_B^{\varphi_1})$ where $\varphi_1:=
%%		\begin{pmatrix}
%%			1 & 0 \\
%%			0 & \a 
%%		\end{pmatrix}$ \quad $(0 \neq \a \in k)$,\,\,\,or
%%		\item $B_2:=\cD(\om_B^{\varphi_2})$ where $\varphi_2:=
%%		\begin{pmatrix}
%%			1 & 1 \\
%%			0 & 1
%%		\end{pmatrix}$.
%%	\end{enumerate}
	Moreover, 
	by Lemma \ref{lem.WL2}, $B_1=\cD(\om_B^{\varphi_1})$ and $B'_1=\cD(\om_B^{\varphi'_1})$ where $\varphi_1=
	\begin{pmatrix}
		1 & 0 \\
		0 & \a 
	\end{pmatrix}$ and $\varphi'_1=
	\begin{pmatrix}
		1 & 0 \\
		0 & \a'
	\end{pmatrix}$
	\,\,\,$(\a,\,\a' \in k\setminus \{0\})$ are isomorphic as graded algebras if and only if
	$\a'=\a^{\pm 1}$.
\end{proof}
%					%%%%%%%%%%%%%%%%%%%%%%%%%%%%%%%%%%%%%%%%%%%%%%%%%%%%%%%%%%%%%%%%%%%%%%%%%%%%%%%%%%%%%
%					{\rm WL$_1$} &
%					$\begin{cases}
%						\a^2xy^2+y^2x-2\a yxy, \\
%						\a^2x^2y+yx^2-2\a xyx
%					\end{cases}$ &
%					$\a'=\a^{\pm 1}$
%					\\ \hline
%					%%%%%%%%%%%%%%%%%%%%%%%%%%%%%%%%%%%%%%%%%%%%%%%%%%%%%%%%%%%%%%%%%%%%%%%%%%%%%%%%%%%%%%
%					{\rm WL$_2$} &
%					$\begin{cases}
%						xy^2+y^2x-2yxy, \\
%						x^2y+yx^2-2xyx+4xy^2-4yxy
%						\\ \hfill +2y^3
%					\end{cases}$ &
%					\ \hfill \textnormal{---------------------} \hfill \rule{0pt}{10pt}
%					\\ \hline
%					%%%%%%%%%%%%%%%%%%%%%%%%%%%%%%%%%%%%%%%%%%%%%%%%%%%%%%%%%%%%%%%%%%%%%%%%%%%%%
%%%%%%%%%%%%%%%%%%%%%%%%%%%%%%%%%%%%%%%%%%%%%%%%%%%%%%%%%%%%%%%%%%%%%%%%%%%%%%%%%%%%%%%%%%%%%%%%%
\section{Defining relations of Type S$'$, T$'$ and FL}\label{sec.Class}
%%%%%%%%%%%%%%%%%%%%%%%%%%%%%%%%%%%%
For the case that $E$ is reduced, Main theorem in Introduction (Theorems \ref{thm.isom} and \ref{thm.grmod} in Section 4)  are proved by the following six steps:

%\vspace{-10pt}
\begin{description}
	\item[{\bf Step 1}] Classify $E$ up to equivalence and $2$-equivalence.%\vspace{-5pt}
	\item[{\bf Step 2}] Find all automorphisms $\sigma \in \Aut_k^G E$.
%\vspace{-5pt}
	\item[{\bf Step 3}] Find the defining relations of $\cA(E,\sigma)$ for each $\sigma \in \Aut_k^G E$ by
	using (G2) condition in Definition \ref{def.GA}. %\vspace{-5pt}
	\item[{\bf Step 4}] Check AS-regularity of $\cA(E,\sigma)$
	via finding twisted superpotentials. %\vspace{-15pt}
	\item[{\bf Step 5}] Classify them up to graded algebra isomorphism in terms of their defining relations
	by using Lemma \ref{thm.GA} (1). %\vspace{-5pt}
	\item[{\bf Step 6}] Classify them up to graded Morita equivalence in terms of their defining relations
	by using Lemma \ref{thm.GA} (2). 
\end{description}
%\vspace{-10pt}

In this section, 
we will check {\bf Step 1} to {\bf Step 4} %of the six steps 
as above. 
If a curve $D$ of bidegree $(1,1)$ is reducible, then $D$ is decomposed to two irreducible curves $\{p\} \times \PP^1$ and $\PP^1 \times \{q\}$
for some $p,q \in \PP^1$.
Note that every curve of bidegree $(1,0)$ in $\PP^1 \times \PP^1$ is written as
$\{p\} \times \PP^1$ for some $p \in \PP^1$.
Similarly, every curve of bidegree $(0,1)$ in $\PP^1 \times \PP^1$ is written as
$\PP^1 \times \{q\}$ for some $q \in \PP^1$.
%For the rest paper, we denote by $\ell$ and $\ell'$ an irreducible curve of bidegree $(1,0)$ and $(0,1)$ respectively.
%For the rest paper, we set the following notations:
%\begin{enumerate}[{\rm (i)}]
%	\item 
%	$\tau_{\a}:=
%	\begin{pmatrix}
%		1 & 0 \\
%		0 & \a 
%	\end{pmatrix} \in \PGL_2(k)$ where $0 \neq \a \in k$.
%	\item $\tau_{\b,\g}:=
%	\begin{pmatrix}
%		1 & \b \\
%		0 & \g 
%	\end{pmatrix} \in \PGL_2(k)$ where $\b \in k$ and $0 \neq \g \in k$.
%	\item $\mu_{\a}:=\tau_{\a}\tau=
%	\begin{pmatrix}
%		0 & 1 \\
%		\a & 0
%	\end{pmatrix}$.
%	\item $P_{\l}:=(1,\l) \in \PP^1$ where $\l \in k$, and $P_{\infty}:=(0,1) \in \PP^1$.
%	\item $\ell_{P}:=\PP^1 \times \{P\}$ where $P \in \PP^1$.
%	\item $\ell'_{P}:=\{P\} \times \PP^1$ where $P \in \PP^1$.
%\end{enumerate}
%%%%%%%%%%
%

\subsection{{\bf Step 1:} Classify $E$ up to equivalence and $2$-equivalence.}
%{\fbox{{\bf Step 1:} Classify $E$ up to equivalence and $2$-equivalence.}
%%%%%%%%%%
\begin{lemma}\label{lem.step0}
	\begin{enumerate}[{\rm (1)}]
		\item Let $E$ be a union of an irreducible curve $C$ of bidegree $(1,1)$, an irreducible curve $\ell$ of bidegree $(1,0)$ and
		an irreducible curve $\ell'$ of bidegree $(0,1)$
		such that the number of intersections of $E$ is three.
		If $\Aut_k^G E \neq \emptyset$, then
%\vspace{-10pt}
		$$E \sim_2 \PP^1 \times \{(1,0)\} \cup \{(1,0)\} \times \PP^1 \cup C_{\tau}\,\,\,	\text{where } \tau=
		\begin{pmatrix}
			0 & 1 \\
			1 & 0
		\end{pmatrix}
		\vspace{-10pt}.
		$$
		%$E$ is $2$-equivalent to $\ell_{P_0} \cup \ell'_{P_0} \cup C_{\tau}$.		
		\item Let $E$ be a union of an irreducible curve $C$ of bidegree $(1,1)$, an irreducible curve $\ell$ of bidegree $(1,0)$ and
		an irreducible curve $\ell'$ of bidegree $(0,1)$
		such that the number of intersections of $E$ is only one.
		If $\Aut_k^G E \neq \emptyset$, then $E$ is $2$-equivalent to either
		%\vspace{-5pt}
\begin{align*}
& E_1=\PP^1 \times \{(1,0)\} \cup \{(1,0)\} \times \PP^1 \cup C_{\tau_{\a}},\quad 
\ \text{ or } \\
& E_2=\PP^1 \times \{(1,0)\} \cup \{(1,0)\} \times \PP^1 \cup C_{\tau_{1,1}},
%\vspace{-5pt}
\end{align*}
%\vspace{-4pt}
		where $\tau_{\a}=
		\begin{pmatrix}
			1 & 0 \\
			0 & \a
		\end{pmatrix}$ and $\tau_{1,1}=
		\begin{pmatrix}
			1 & 1 \\
			0 & 1
		\end{pmatrix}$. 
		\vspace{-5pt}
\item Let $E$ be a union of two distinct irreducible curves $\ell_1, \ell_2$ of bidegree $(1,0)$ in $\PP^1 \times \PP^1$ and 
		$\ell_3, \ell_4$ of bidegree $(0,1)$ in $\PP^1 \times \PP^1$.
		If $\Aut_k^G E \neq \emptyset$, then
%\vspace{-5pt}
			$$E \sim_2 
			\PP^1 \times \{(1,0)\} \cup \PP^1 \times \{(0,1)\} \cup \{(1,0)\} \times \PP^1 \cup \{(0,1)\} \times \PP^1.$$
	\end{enumerate}
\end{lemma}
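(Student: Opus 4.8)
The plan is to handle all three parts by a single analysis of how a fixed $\sigma \in \Aut_k^G E$ permutes the irreducible components of $E$. Recall first that every irreducible curve of bidegree $(1,0)$, $(0,1)$, or (when irreducible) $(1,1)$ in $\PP^1 \times \PP^1$ is respectively of the form $\{p\} \times \PP^1$, $\PP^1 \times \{q\}$, or $C_{\tau} = \{(p,\tau(p)) \mid p \in \PP^1\}$ for some $p,q \in \PP^1$, $\tau \in \Aut_k \PP^1$ (the last recalled above from \cite[Lemma 4.4]{MaS}). So in (1) and (2) we may write $E = C_{\tau} \cup (\{p\}\times\PP^1) \cup (\PP^1\times\{q\})$, and in (3) $E = (\{p_1\}\times\PP^1) \cup (\{p_2\}\times\PP^1) \cup (\PP^1\times\{q_1\}) \cup (\PP^1\times\{q_2\})$ with $p_1 \neq p_2$ and $q_1 \neq q_2$. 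Since $\sigma$ is an automorphism of the variety $E$ it permutes the irreducible components, and since $\pi_1 \circ \sigma = \pi_2$ on $E$ the image of a bidegree $(0,1)$ component $\PP^1 \times \{q\}$ is contained in $\{q\}\times\PP^1$. Inspecting the components of $E$ (as in the proof of Lemma~\ref{lem.subvar}), the only one that can lie inside $\{q\}\times\PP^1$ is a bidegree $(1,0)$ curve $\{q\}\times\PP^1$; hence $\sigma$ sends $(0,1)$-components to $(1,0)$-components, and symmetrically $(1,0)$-components to $(0,1)$-components. In (1) and (2) this forces $p = q =: p_0$; in (3) it forces $\{q_1,q_2\} = \{p_1,p_2\}$.

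Part (3) then follows immediately: after relabelling so that $q_1 = p_1$ and $q_2 = p_2$, choose $\rho \in \Aut_k\PP^1$ with $\rho(p_1) = (1,0)$ and $\rho(p_2) = (0,1)$; then $(\rho\times\rho)(E)$ is exactly the variety displayed in (3), so $E$ is $2$-equivalent to it. For (1) and (2), the pairwise intersections are $C_{\tau}\cap(\{p_0\}\times\PP^1) = \{(p_0,\tau p_0)\}$, $C_{\tau}\cap(\PP^1\times\{p_0\}) = \{(\tau^{-1}p_0, p_0)\}$, and $(\{p_0\}\times\PP^1)\cap(\PP^1\times\{p_0\}) = \{(p_0,p_0)\}$; these three points are pairwise distinct when $\tau p_0 \neq p_0$ and all three coincide when $\tau p_0 = p_0$, with no case in between. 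So the hypothesis on the number of intersection points says precisely that $\tau p_0 \neq p_0$ in case (1) and $\tau p_0 = p_0$ in case (2). In case (2), after conjugating so that $p_0 = (1,0)$ the map $\tau$ fixes $(1,0)$, hence is represented by an upper triangular matrix, and conjugating further by elements fixing $(1,0)$ puts it in Jordan form $\tau_{\a}$ (the diagonalizable case, including $\tau = \mathrm{id}$) or $\tau_{1,1}$; interpreting this conjugation via Definition~\ref{def.equiv}(2) gives $E \sim_2 E_1$ or $E \sim_2 E_2$.

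It remains to treat case (1), and here the one nontrivial point is an extra constraint on $\tau$ extracted from the mere existence of $\sigma$. We know $\sigma$ carries $\PP^1\times\{p_0\}$ to $\{p_0\}\times\PP^1$, and then carries $\{p_0\}\times\PP^1$ and $C_{\tau}$ to the two remaining components in one of two possible ways. In either case, writing $\sigma$ explicitly on $C_{\tau}$ and on $\{p_0\}\times\PP^1$ and equating the two resulting values of $\sigma$ at the intersection point $(p_0,\tau p_0)$ yields $\tau^2 p_0 = p_0$. Combined with $\tau p_0 \neq p_0$ and the elementary fact that an element of $\PGL_2(k)$ whose square fixes a point it does not itself fix must be an involution, this shows $\tau$ is an involution with $\tau p_0 \neq p_0$. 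Conjugating so that $p_0 = (1,0)$ and $\tau p_0 = (0,1)$, being an involution forces $\tau$ to interchange $(1,0)$ and $(0,1)$, i.e. $\tau = \begin{pmatrix} 0 & \lambda \\ 1 & 0 \end{pmatrix}$ for some $\lambda \in k\setminus\{0\}$; a last conjugation by $\begin{pmatrix} s & 0 \\ 0 & 1 \end{pmatrix}$ with $s^2 = \lambda^{-1}$ (which fixes $(1,0)$ and $(0,1)$) normalizes $\lambda = 1$, giving $\tau = \begin{pmatrix} 0 & 1 \\ 1 & 0 \end{pmatrix}$ and hence the normal form in (1).

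I expect the main obstacle to be the component-permutation bookkeeping: carefully enumerating the permutations $\sigma$ can induce (especially the two subcases needed for part (1)) and checking that compatibility of $\sigma$ at the triple of intersection points really forces $\tau^2 p_0 = p_0$ in each subcase. One must also be mildly careful that "number of intersections" counts distinct points at which two or more components meet, so that the dichotomy "$3$ points versus $1$ point" is exhaustive. The remaining $\PGL_2(k)$-normalizations and the intersection computations are routine.
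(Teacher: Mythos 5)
Your proposal is correct, and it reaches the same normal forms by the same overall strategy (pin down the configuration using the existence of $\sigma$, then normalize by conjugation in $\PGL_2(k)$), but the mechanism you use to constrain the configuration is genuinely different and more uniform than the paper's. You extract everything from a single component-permutation observation: since $\sigma$ is an automorphism of $E$ it permutes irreducible components, and $\pi_1\circ\sigma=\pi_2$ forces the image of $\PP^1\times\{q\}$ to be a component contained in $\{q\}\times\PP^1$, which can only be a $(1,0)$-line through $q$. This gives $p=q$ in (1) and (2) and $\{q_1,q_2\}=\{p_1,p_2\}$ in (3) in one stroke, whereas the paper argues each part separately: in (1) by counting how many points of $\PP^1\times\{P\}$ can land on the other components, in (2) by invoking preservation of the unique intersection point, and in (3) by a pointwise case analysis on $\ell_3$ and $\ell_4$. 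For the key constraint $\tau^2p_0=p_0$ in (1), you equate the two descriptions of $\sigma$ at the single point $(p_0,\tau p_0)\in\ell\cap C_\tau$ in each of the two possible component permutations, while the paper instead tracks how $\sigma$ permutes the three intersection points; both computations are valid (I checked your two cases, and your ``elementary fact'' about squares in $\PGL_2(k)$ does hold, though you only need $\tau(\tau p_0)=p_0$ itself to conclude that $\tau$ is anti-diagonal in suitable coordinates). Your version is arguably cleaner and makes the role of Lemma~\ref{lem.subvar} explicit; the paper's version avoids having to discuss component permutations at all. The remaining normalizations (upper-triangular Jordan form in (2), rescaling the anti-diagonal entry in (1)) coincide with the paper's.
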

%\vspace{-15pt}
%%%%%%%%%%%%%%
%
%\textcolor{magenta}{（202050507-Matsuno note のp.5-p.6の前半、Proof of Proposition 1.8 証明の記載が、主張の書き方の沿っていなくて、よくわからん）}
%

\begin{proof}
%\begin{enumerate}[{\rm (1)}]
%\item 
%Let $E$ be the union of an irreducible curve of bidegree $(1,1)$ and a reducible curve of bidegree $(1,1)$.
(1)\,\,
Let $E$ be a union of an irreducible curve $C_{\tau}$ of bidegree $(1,1)$, an irreducible curve $\ell$ of bidegree $(1,0)$ and
an irreducible curve $\ell'$ of bidegree $(0,1)$
such that the number of intersections of $E$ is three where $\tau \in \Aut_k \PP^1$.
We set $\ell:=\{P_1\} \times \PP^1$ and $\ell':=\PP^1 \times \{P_2\}$
where $P_1, P_2 \in \PP^1$. 
%Let $E=\ell_1 \cup \ell_2 \cup C_{\tau}$ where $\tau \in \Aut_k \PP^1$.
The set of intersections of $E$ is denote by $\{(P_1,P_2)$, $(P_1,\tau(P_1))$, $(\tau^{-1}(P_2), P_2)\}$.  
Since $\tau(P_1) \neq P_2$, there exists $\rho \in \Aut_k \PP^1$ such that
$\rho(\tau(P_1))=(0,1)$ and $\rho(P_2)=(1,0)$.
Since 
$(\rho \times \rho)(E)=(\{\rho(P_1)\} \times \PP^1) \cup (\PP^1 \times \{(1,0)\}) \cup C_{\rho\tau\rho^{-1}}
$, 
the set of intersections of $(\rho \times \rho)(E)$ is denoted by 
%\vspace{-5pt}
$$\{(\rho(P_1),(1,0)), (\rho(P_1),(0,1)), (\rho(\tau^{-1}(P_2)),(1,0))\}.
%\vspace{-5pt}
$$ 
Let $\sigma \in \Aut_k^G ((\rho \times \rho)(E))$ and $(r, (1,0)) \in \PP^1 \times \{(1,0)\}$.
If $\sigma(r,(1,0)) \in \PP^1 \times \{(1,0)\}$, then $r=(1,0)$.
If $\sigma(r,(1,0)) \in C_{\rho\tau\rho^{-1}}$, then $\sigma(r,(1,0))=((1,0),(\rho\tau\rho^{-1})(1,0))$. 
This means that the number of points of $\PP^1 \times \{(1,0)\}$
which satisfies $\sigma(r,(1,0)) \in \PP^1 \times \{(1,0)\}$ or $\sigma(r,(1,0)) \in C_{\rho\tau\rho^{-1}}$ is at most two. 
Therefore, there exists $r \in \PP^1 \setminus \{(1,0)\}$ such that $\sigma(r,(1,0)) \notin C_{\rho\tau\rho^{-1}}$.
Since $\sigma(r,(1,0)) \in \{\rho(P_1)\} \times \PP^1$, we have $\rho(P_1)=(1,0)$.
Since $\sigma$ preserves intersections and $\tau(P_1) \neq P_2$, we have $\rho(\tau^{-1}(P_2))=(0,1)$.
Since 
	$((1,0),(0,1)), ((0,1),(1,0)) \in C_{\rho\tau\rho^{-1}}$,
%\begin{center}
	$(\rho\tau\rho^{-1})(1,0)=(0,1)$ and $(\rho\tau\rho^{-1})(0,1)=(1,0)$ hold,
%\end{center}
so we can write 
$\rho\tau\rho^{-1}=
\begin{pmatrix}
	0 & 1 \\
	\g & 0
\end{pmatrix}$
where $\g \in k \setminus \{0\}$.
Let $\mu=
\begin{pmatrix}
	\g^{\frac{1}{2}} & 0 \\
	0 & 1
\end{pmatrix}\in \Aut_k \PP^1$.
Then $\mu(\rho\tau\rho^{-1})\mu^{-1}=
\begin{pmatrix}
	0 & 1 \\
	1 & 0 
\end{pmatrix}$.
Therefore, $E$ is $2$-equivalent to
$(\{(1,0)\} \times \PP^1) \cup (\PP^1 \times \{(1,0)\}) \cup C_{\tau}$
where $\tau=
\begin{pmatrix}
	0 & 1 \\
	1 & 0
\end{pmatrix} \in \Aut_k \PP^1$.
%%%%%%%%%%%%%%%%%%%%%%%%%%%%%%%%%%%%%%%%%%%%%%%%%%%%%%%%%%%%%%%%%%%%%%%%%%%%%%%%%%%%%%%%%%%%%%%%%%%%%%%%%%%%%%%%%%%%%%%%%%%%%%%%%%%%%%%%%%%%%%%%%%%%%%%%%%%%%%%%%%%%%%%%%%%%%%%%%%%%%%
%\item

\smallskip

\noindent
(2)\,\,
Let $E$ be a union of an irreducible curve $C$ of bidegree $(1,1)$, an irreducible curve $\ell$ of bidegree $(1,0)$ and
an irreducible curve $\ell'$ of bidegree $(0,1)$
such that the number of intersections of $E$ is only one.
For $P_1, P_2 \in \PP^1$, we set $\ell:=\{P_1\} \times \PP^1$ and $\ell':=\PP^1 \times \{P_2\}$. 
In this case, the set of
the intersection of $E$ is denoted by
%\begin{center}
$\{(P_1,P_2)\}$.
%\end{center}
Let $\sigma \in \Aut_{k}^G E$. Since $\sigma$ preserves the intersection $(P_1,P_2)$, we have $P_1=P_2$.
Take $\rho \in \Aut_k \PP^1$ with $\rho(P_1)=(1,0)$. In this case, we have  
%\begin{center}
	$(\rho \times \rho)(E)=(\{(1,0)\} \times \PP^1) \cup (\PP^1 \times \{(1,0)\}) \cup C_{\rho\tau\rho^{-1}}$.
%\end{center}
Write $\rho\tau\rho^{-1}=
\begin{pmatrix}
	a & b \\
	c & d 
\end{pmatrix}$. Since $((1,0),(1,0)) \in C_{\rho\tau\rho^{-1}}$, 
$a \neq 0$ and $c=0$ hold, so
$\rho\tau\rho^{-1}=
\begin{pmatrix}
	1 & b \\
	0 & d 
\end{pmatrix}$.
From the above, we may assume that
	$E=(\{(1,0)\} \times \PP^1) \cup (\PP^1 \times \{(1,0)\}) \cup C_{\tau}$
where $\tau=
\begin{pmatrix}
	1 & \b \\
	0 & \a 
\end{pmatrix} \in \Aut_k \PP^1$. 
%We also use the following notations:
%\begin{center}
%	$\ell_1=\{P\} \times \PP^1$ and $\ell_2=\PP^1 \times \{P\}$.
%\end{center}
We will show that $E$ is $2$-equivalent to one of the following; 
%\begin{align*}
%\vspace{-5pt}
	\begin{enumerate}[(i)]
	\item $E_1=\{(1,0)\} \times \PP^1 \cup \PP^1 \times \{(1,0)\} \cup C_{\tau_{\a}},\quad
	\tau_{\a}=
	\begin{pmatrix}
		1 & 0 \\
		0 & \a 
	\end{pmatrix}$, 
%\vspace{-5pt}
	\item $E_2=\{(1,0)\} \times \PP^1 \cup \PP^1 \times \{(1,0)\} \cup C_{\tau_{1,1}},\quad
	\tau_{1,1}=
	\begin{pmatrix}
		1 & 1 \\
		0 & 1
	\end{pmatrix}$.
	\end{enumerate}
%\end{align*}
%\vspace{-5pt}
When $\b=0$, $E=E_1$, so we assume that $\b \neq 0$.

\noindent
(i)\,\,When $\a \neq 1$, we set 
$\mu:=
\begin{pmatrix}
	1 & \b/(1-\a) \\
	0 & 1
\end{pmatrix} \in \Aut_k \PP^1$. 
In this case, 
	$$\mu\tau\mu^{-1}=
	\begin{pmatrix}
		1 & \b/(1-\a) \\
		0 & 1
	\end{pmatrix}
	\begin{pmatrix}
		1 & \b \\
		0 & \a 
	\end{pmatrix}
	\begin{pmatrix}
		1 & -\b/(1-\a) \\
		0 & 1
	\end{pmatrix}=
	\begin{pmatrix}
		1 & 0 \\
		0 & \a
	\end{pmatrix}=\tau_{\a}.$$ 

\noindent
(ii)\,\,When $\a=1$, we set $\mu:=
\begin{pmatrix}
	1 & 0 \\
	0 & \b 
\end{pmatrix} \in \Aut_k \PP^1$.
In this case,  
	$$\mu\tau\mu^{-1}=
	\begin{pmatrix}
		1 & 0 \\
		0 & \b 
	\end{pmatrix}
	\begin{pmatrix}
		1 & \b \\
		0 & 1 
	\end{pmatrix}
	\begin{pmatrix}
		1 & 0 \\
		0 & \b^{-1}
	\end{pmatrix}=
	\begin{pmatrix}
		1 & 1 \\
		0 & 1 
	\end{pmatrix}=\tau_{1,1}.$$
Therefore, $E$ is $2$-equivalent to either $E_1$ or $E_2$.
%%%%%%%%%%%%%%%%%%%%%%%%%%%%%%%%%%%%%%%%%%%%%%%%%%%%%%%%%%%%%%%%%%%%%%%%%%%%%%%%%%%%%%%%%%%%%%%%%%%%%%%%%%%%%%%%%%%%%%%%%%%%%%%%%%%%%%%%%%%%%%%%%%%%%%%%%%%%%%%%%%%%%%%%%%%%%%%%%%%%%%%%

\smallskip

%Let $E$ be the union of two reducible curves of bidegree $(1,1)$.
\noindent
%\item 
(3)\,\,Let $E$ be a union of two distinct irreducible curves $\ell_1, \ell_2$ of bidegree $(1,0)$ in $\PP^1 \times \PP^1$ and 
$\ell_3, \ell_4$ of bidegree $(0,1)$ in $\PP^1 \times \PP^1$.
We set $\ell_1:=\{P_1\} \times \PP^1$, $\ell_2:=\{P_2\} \times \PP^1$, $\ell_3:=\PP^1 \times \{P_3\}$ and $\ell_4:=\PP^1 \times \{P_4\}$
where $P_1, P_2, P_3, P_4 \in \PP^1$, $P_1 \neq P_2$ and $P_3 \neq P_4$.
%Note that $\ell_1, \ell_2$ are divisors of bidegree $(1,0)$ and $\ell_3, \ell_4$ are divisors of bidegree $(0,1)$.
Let $\sigma \in \Aut_k^G E$. 
%\vspace{-5pt}
\begin{itemize}
\item If $\sigma(p,P_3) \in \ell_3$, then $\sigma(p,P_3)=(P_3,P_3)$. 
%\vspace{-5pt}
\item If $\sigma(p,P_3) \in \ell_4$, then $\sigma(p,P_3)=(P_3,P_4)$.
This means that there exists $(p,P_3) \in \ell_3$ such that $\sigma(p,P_3) \in \ell_1$ or $\sigma(p,P_3) \in \ell_2$. 
%\vspace{-5pt}
\item If $\sigma(p,P_3) \in \ell_1$ (resp. $\sigma(p,P_3) \in \ell_2$), then $P_3=P_1$ (resp. $P_3=P_2$).
%Similarly, 
Also, there exists $(p,P_4) \in \ell_4$ such that $\sigma(p,P_4) \in \ell_1$ or $\sigma(p,P_4) \in \ell_2$.
%\vspace{-10pt}
\item If $\sigma(p,P_4) \in \ell_1$ (resp. $\sigma(p,P_4) \in \ell_2$), then $P_4=P_1$ (resp. $P_4=P_2$).
Since $P_3 \neq P_4$, we have $(P_3,P_4)=(P_1,P_2)$ or $(P_3,P_4)=(P_2,P_1)$. 
%\vspace{-5pt}
\end{itemize}

\noindent
From the above, we may assume that
	$E=(\{P_1\} \times \PP^1) \cup (\{P_2\} \times \PP^1) \cup (\PP^1 \times \{P_1\}) \cup (\PP^1 \times \{P_2\})$.
Since there exists $\tau \in \Aut_k \PP^1$ such that $\tau(P_1)=(1,0)$ and $\tau(P_2)=(0,1)$,
$E$ is $2$-equivalent to
%\begin{center}
	$(\{(1,0)\} \times \PP^1) \cup (\{(0,1)\} \times \PP^1) \cup (\PP^1 \times \{(1,0)\}) \cup (\PP^1 \times \{(0,1)\})$.
%\end{center}
%When we set $\sigma':=(\tau \times \tau) \circ \sigma \circ (\tau \times \tau)^{-1}$,
%it follows from Proposition \ref{prop.gp} that $\sigma' \in \Aut_k^G ((\tau \times \tau)(E))$.
%\end{enumerate}
\end{proof}

%%%%%%%%%%%%%%

%\medskip

\subsection{{\bf Step 2:} Find all automorphisms $\sigma \in \Aut_k^G E$.}
%\noindent
%\fbox{{\bf Step 2:} Find all automorphisms $\sigma \in \Aut_k^G E$.}
%%%%%%%%%%%%%%%
\begin{lemma}\label{lem.step1}
	\begin{enumerate}[{\rm (1)}]
		\item Let $E=\{P\} \times \PP^1 \cup \PP^1 \times \{P\} \cup C_{\tau}$
		where $P=(1,0)$ and $\tau=
		\begin{pmatrix}
			0 & 1 \\
			1 & 0
		\end{pmatrix}$.
		Then
		every automorphism $\sigma \in \Aut_k^G E$ is written as one of the following{\rm :}
			$${\rm (i)\,}\begin{cases}
				\sigma(p,P)=(P,\tau_{\a}(p)), \\
				\sigma(P,p)=(p,P), \\
				\sigma(p,\tau(p))=(\tau(p),p),
			\end{cases}
			\quad
			{\rm \quad (ii)\,}\begin{cases}
				\sigma(p,P)=(P,\mu_{\a}(p)), \\
				\sigma(P,p)=(p,\tau(p)), \\
				\sigma(p,\tau(p))=(\tau(p),P), 
			\end{cases}$$
		where $\tau_{\a}=
		\begin{pmatrix}
			1 & 0 \\
			0 & \a
		\end{pmatrix}$, $\mu_{\a}=
		\begin{pmatrix}
			0 & 1 \\
			\a & 0 
		\end{pmatrix}$ and $\a \in k \setminus \{0\}$. 
		
		\item 
		\begin{itemize}
		\item Let $E=\{P\} \times \PP^1 \cup \PP^1 \times \{P\} \cup C_{\tau_{\a}}$
		where $P=(1,0)$ and $\tau_{\a}=
		\begin{pmatrix}
			1 & 0 \\
			0 & \a
		\end{pmatrix}$.
		Then every automorphism $\sigma \in \Aut_k^G E$ is written as one of the following{\rm :}
			$${\rm (i)\,}\begin{cases}
				\sigma(p,P)=(P,\tau_{\b,\g}(p)), \\
				\sigma(P,p)=(p,P), \\
				\sigma(p,\tau_{\a}(p))=(\tau_{\a}(p),\tau_{\a}^2(p)),
			\end{cases}\quad
			{\rm (ii)\,}\begin{cases}
				\sigma(p,P)=(P,\tau_{\b,\g}(p)), \\
				\sigma(P,p)=(p,\tau_{\a}(p)), \\
				\sigma(p,\tau_{\a}(p))=(\tau_{\a}(p),P), 
			\end{cases}$$
		where $\tau_{\b,\g}=
		\begin{pmatrix}
			1 & \b \\
			0 & \g
		\end{pmatrix}$ and $\b\in k$, $\g \in k \setminus \{0\}$.
%		Moreover, the first automorphism $\sigma$ can be extended to an automorphism of $\PP^1 \times \PP^1$
%		if and only if $\b=0$ and $\g=\a^2$.
		
		\item Let $E=\{P\} \times \PP^1 \cup \PP^1 \times \{P\} \cup C_{\tau_{1,1}}$
		where $P=(1,0)$ and $\tau_{1,1}=
		\begin{pmatrix}
			1 & 1 \\
			0 & 1
		\end{pmatrix}$.
		Then every automorphism $\sigma \in \Aut_k^G E$ is written as one of the following{\rm :}
			$${\rm (i)\,}\begin{cases}
				\sigma(p,P)=(P,\tau_{\b,\g}(p)), \\
				\sigma(P,p)=(p,P), \\
				\sigma(p,\tau_{1,1}(p))=(\tau_{1,1}(p),\tau_{1,1}^2(p)),
			\end{cases}
			{\rm (ii)\,}\begin{cases}
				\sigma(p,P)=(P,\tau_{\b,\g}(p)), \\
				\sigma(P,p)=(p,\tau_{1,1}(p)), \\
				\sigma(p,\tau_{1,1}(p))=(\tau_{1,1}(p),P), 
			\end{cases}$$
		where $\tau_{\b,\g}=
		\begin{pmatrix}
			1 & \b \\
			0 & \g
		\end{pmatrix}$ and $\b\in k$, $\g \in k \setminus \{0\}$.
		\end{itemize}
		
		\item Let $E=\PP^1 \times \{P\} \cup \PP^1 \times \{Q\} \cup \{P\} \times \PP^1 \cup \{Q\} \times \PP^1$ where $P=(1,0)$ and $Q=(0,1)$.
		Then every automorphism $\sigma \in \Aut_k^G E$ is written as one of the following{\rm :}
			$${\rm (i)\,}\begin{cases}
				\sigma(p,P)=(P,\tau_{\a}(p)), \\
				\sigma(p,Q)=(Q,\tau_{\b}(p)), \\
				\sigma(P,p)=(p,P), \\
				\sigma(Q,p)=(p,Q),
			\end{cases}\quad
			{\rm (ii)\,}\begin{cases}
				\sigma(p,P)=(P,\mu_{\a}(p)), \\
				\sigma(p,Q)=(Q,\mu_{\b}(p)), \\
				\sigma(P,p)=(p,Q), \\
				\sigma(Q,p)=(p,P), 
			\end{cases}$$
		where $\tau_{\a}=
		\begin{pmatrix}
			1 & 0 \\
			0 & \a
		\end{pmatrix}$, $\tau_{\b}=
		\begin{pmatrix}
		1 & 0 \\
		0 & \b
		\end{pmatrix}$, $\mu_{\a}=
		\begin{pmatrix}
			0 & 1 \\
			\a & 0 
		\end{pmatrix}$, $\mu_{\b}=
		\begin{pmatrix}
		0 & 1 \\
		\b & 0 
		\end{pmatrix}$ and $\a, \b \in k$, $\a\b \neq 0$.
	\end{enumerate}
\end{lemma}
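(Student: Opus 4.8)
The unifying idea for all four parts is that an automorphism $\sigma \in \Aut_k^G E$ must permute the irreducible components of $E$ and must respect the defining relation $\pi_1 \circ \sigma = \pi_2$. The plan is to exploit this rigidity componentwise. For each part I would first observe that each irreducible component of $E$ is a copy of $\PP^1$, so the restriction of $\sigma$ to a component is either an isomorphism onto another component or, as in Lemma \ref{lem.subvar}, would have finite image and contradict injectivity; hence $\sigma$ induces an honest permutation of the components. Then the constraint $(\pi_1 \circ \sigma)(p,q) = q$ forces the second coordinate of $\sigma(p,q)$ to be determined once we know the first, which cuts the number of free choices drastically. For instance in part (1), a point $(p,P) \in \PP^1 \times \{P\}$ has $\sigma(p,P)$ with first coordinate $P$, so $\sigma(p,P) \in \{P\} \times \PP^1$; thus the vertical line through $P$ maps to the horizontal line through $P$, forcing $\sigma(P,p) = (p, ?)$, and by the $\pi_1\circ\sigma = \pi_2$ rule applied again the image lies on whichever component passes through $(p,\cdot)$. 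Iterating, $\sigma$ is completely pinned down up to a $\PP^1$-automorphism on one of the components.

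The detailed bookkeeping then goes as follows. In each case I would: (a) label the components and their pairwise intersection points; (b) determine how $\sigma$ permutes the components by tracking where intersection points (which $\sigma$ must preserve as a set) can go — this is exactly the argument already used in the proof of Lemma \ref{lem.step0}, so I would reuse it; (c) on each component, write $\sigma$ as an element $\tau_\bullet \in \Aut_k \PP^1 = \PGL_2(k)$ and use the compatibility with $\pi_1 \circ \sigma = \pi_2$ together with the matching at intersection points to solve for the entries of that matrix. In part (1) the component $C_\tau$ with $\tau = \bigl(\begin{smallmatrix} 0 & 1 \\ 1 & 0\end{smallmatrix}\bigr)$ is sent either to itself (swapping coordinates) or the lines/conic get cyclically permuted, giving exactly the two families (i) and (ii); the parameter $\a$ is the residual freedom of the restriction to $\PP^1 \times \{P\}$. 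Parts (2) and (3) are structurally identical, just with $\tau_\a$ or $\tau_{1,1}$ in place of the swap matrix, and the two-parameter family $\tau_{\b,\g}$ arises because the stabilizer of the point $P = (1,0)$ inside $\PGL_2(k)$ is the group of upper-triangular matrices $\bigl(\begin{smallmatrix} 1 & \b \\ 0 & \g \end{smallmatrix}\bigr)$. Part (4) is a pure four-lines configuration: $\sigma$ either fixes the two horizontal lines $\PP^1\times\{P\}$, $\PP^1\times\{Q\}$ individually or swaps them, and correspondingly fixes or swaps $\{P\}\times\PP^1$, $\{Q\}\times\PP^1$; the diagonal subgroup $\tau_\a = \bigl(\begin{smallmatrix}1 & 0\\ 0 & \a\end{smallmatrix}\bigr)$ resp. the anti-diagonal $\mu_\a = \bigl(\begin{smallmatrix}0 & 1\\ \a & 0\end{smallmatrix}\bigr)$ are precisely the automorphisms of $\PP^1$ fixing resp. swapping $\{P, Q\}$, so the same analysis yields families (i) and (ii).

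The final verification step in each case is to check that the claimed formulas genuinely define automorphisms of $E$ satisfying $\pi_1 \circ \sigma = \pi_2$ — that is, that the piecewise definition on the components is consistent at the intersection points (e.g. in part (1)(i) one checks $\sigma(P,P)$ computed via the first line agrees with $\sigma(P,P)$ computed via the third, which holds iff $\tau_\a(P) = P$, true for $P = (1,0)$). This is a routine but necessary finite check at finitely many points.

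I expect the main obstacle to be part (1): there the nontrivial automorphism $\tau = \bigl(\begin{smallmatrix}0 & 1\\ 1 & 0\end{smallmatrix}\bigr)$ of the bidegree-$(1,1)$ component interacts with the two lines in a way that allows $\sigma$ to cyclically permute all three components, and one must carefully verify that the resulting second-coordinate maps $\mu_\a = \bigl(\begin{smallmatrix}0 & 1\\ \a & 0\end{smallmatrix}\bigr)$ are forced and not merely one admissible choice. Disentangling the case where a line maps to the conic (as opposed to line-to-line) and showing the residual freedom is exactly the single scalar $\a$ — no more, no less — is the delicate point; parts (2), (3), (4) are then largely parallel and lighter.
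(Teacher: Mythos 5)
Your proposal is correct and follows essentially the same route as the paper: use $\pi_1\circ\sigma=\pi_2$ together with the intersection points to pin down how $\sigma$ permutes the components, then write the restriction to each component as an element of $\PGL_2(k)$ and solve for its entries from the images of the intersection points, leaving exactly the stated residual parameters. The consistency check at the intersection points that you add at the end is left implicit in the paper but is the same finite verification.
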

%%%%%%%%%%
%
%\textcolor{magenta}{（202050507-Matsuno note のp.5-- Proof of Proposition 1.9 ?? 証明の記載が、主張の書き方の沿っていなくて、よくわからん）}
\vspace{-15pt}

\begin{proof}
%\begin{enumerate}[{\rm (1)}]
%\item
(1)\,\, 
Let $E=\{P\} \times \PP^1 \cup \PP^1 \times \{P\} \cup C_{\tau}$
where $P=(1,0)$, $\tau=
\begin{pmatrix}
	0 & 1 \\
	1 & 0
\end{pmatrix}$.

\noindent
(i) Assume that $\sigma(\PP^1 \times \{P\})=\{P\} \times \PP^1$, $\sigma(\{P\} \times \PP^1)= \PP^1 \times \{P\}$ and $\sigma(C_{\tau})=C_{\tau}$.
In this case, $\sigma$ is written as 
$\begin{cases}
	\sigma(p,P)=(P,\rho(p)), \\
	\sigma(P,p)=(p,P), \\
	\sigma(p,\tau(p))=(\tau(p),\tau^2(p)).
\end{cases}$
Since $\sigma(P,P)=(P,P)$ and $\sigma(Q,P)=(P,Q)$, we have
$\rho(P)=P$, $\rho(Q)=\tau(Q)$. So, we can write $\rho=
\begin{pmatrix}
	1 & 0 \\
	0 & \a
\end{pmatrix}$\,\,
($\a \in k \setminus \{0\}$).

\noindent
(ii) Assume that $\sigma(\{P\} \times \PP^1)=C_{\tau_0}$, $\sigma(\PP^1 \times \{P\})=\{P\} \times \PP^1$ and $\sigma(C_{\tau_0})=\PP^1 \times \{P\}$.
In this case, $\sigma$ is written as 
$\begin{cases}
	\sigma(p,P)=(P,\rho(p)), \\
	\sigma(P,p)=(p,\tau(p)), \\
	\sigma(p,\tau(p))=(\tau(p),P).
\end{cases}$
Since $\sigma(P,P)=(P,Q)$ and $\sigma(Q,P)=(P,P)$, we have 
$\rho(P)=Q$, $\rho(Q)=P$. So, we can write
$\rho=
\begin{pmatrix}
	0 & 1 \\
	\a & 0
\end{pmatrix}$ \,\,($\a \in k \setminus \{0\}$).
\noindent
(2)\,\,
%\begin{itemize}
%\item 
Let $E=\{P\} \times \PP^1 \cup \PP^1 \times \{P\} \cup C_{\tau_{\a}}$
where $\tau_{\a}=
\begin{pmatrix}
	1 & 0 \\
	0 & \a
\end{pmatrix}$.

\noindent
(i)\,\,Assume that $\sigma(\{P\} \times \PP^1)=\PP^1 \times \{P\}$, $\sigma(\PP^1 \times \{P\})=(\{P\} \times \PP^1$ and $\sigma(C_{\tau_{\a}})=C_{\tau_{\a}}$.
In this case, $\sigma$ is written as 
$\begin{cases}
	\sigma(p,P)=(P,\rho(p)), \\
	\sigma(P,p)=(p,P), \\
	\sigma(p,\tau_{\a}(p))=(\tau_{\a}(p),\tau_{\a}^2(p)).
\end{cases}$
Since $\sigma(P,P)=(P,P)$, we have $\rho(P)=P$. 
So we can write
$\rho=
\begin{pmatrix}
	1 & \b \\
	0 & \g 
\end{pmatrix}$ \,\,($\b\in k$, $\g \in k \setminus \{0\}$).

\smallskip

\noindent
(ii)\,\,Assume that $\sigma(\{P\} \times \PP^1)=C_{\tau_{\a}}$, $\sigma(\PP^1 \times \{P\})=(\{P\} \times \PP^1$ and $\sigma(C_{\tau_{\a}})=\PP^1 \times \{P\}$.
In this case, $\sigma$ is written as 
$\begin{cases}
	\sigma(p,P)=(P,\rho(p)), \\
	\sigma(P,p)=(p,\tau_{\a}(p)), \\
	\sigma(p,\tau_{\a}(p))=(\tau_{\a}(p),P).
\end{cases}$
Since $\sigma(P,P)=(P,P)$, we have $\rho(P)=P$. So we can write
$\rho=
\begin{pmatrix}
	1 & \b \\
	0 & \g 
\end{pmatrix}$ 
\,\,($\b\in k$, $\g \in k \setminus \{0\}$).
%%%%%%%%%%%%%%%%%%%%%%%%%%%%%%%%%%%%%%%%%%%%%%%%%%%%%%%%%%%%%%%%%%%%%%%%%%%%%%%%%%%%%%%%%%%%%%%%%%%%%%%%%%%%%%%%%%%%%%%%%%%%%%%%%%%%%%%%%%%%%%%%%%%%%%%%%%%%%%%%%%%%%%%%%%%%%%%%%%
%%\item
%\noindent
%(3)\,\,
%\item

Let $E=\{P\} \times \PP^1 \cup \PP^1 \times \{P\} \cup C_{\tau_{1,1}}$
where $\tau_{1,1}=
\begin{pmatrix}
	1 & 1 \\
	0 & 1
\end{pmatrix}$.

\noindent
(i)\,\,Assume that $\sigma((\{P\} \times \PP^1)=\PP^1 \times \{P\}$, $\sigma(\PP^1 \times \{P\})=\{P\} \times \PP^1$ and $\sigma(C_{\tau_{1,1}})=C_{\tau_{1,1}}$.
In this case, $\sigma$ is written as 
$\begin{cases}
	\sigma(p,P)=(P,\rho(p)), \\
	\sigma(P,p)=(p,P), \\
	\sigma(p,\tau_{1,1}(p))=(\tau_{1,1}(p),\tau_{1,1}^2(p)).
\end{cases}$
Since $\sigma(P,P)=(P,P)$, we have $\rho(P)=P$. 
So we can write
$\rho=
\begin{pmatrix}
	1 & \b \\
	0 & \g 
\end{pmatrix}$
\,\,($\b\in k$, $\g \in k \setminus \{0\}$).

\noindent
(ii)\,\,Assume that $\sigma(\{P\} \times \PP^1)=C_{\tau_{1,1}}$, $\sigma(\PP^1 \times \{P\})=(\{P\} \times \PP^1$ and $\sigma(C_{\tau_{1,1}})=\PP^1 \times \{P\}$.
In this case, $\sigma$ is written as 
$\begin{cases}
	\sigma(p,P)=(P,\rho(p)), \\
	\sigma(P,p)=(p,\tau_{1,1}(p)), \\
	\sigma(p,\tau_{1,1}(p))=(\tau_{1,1}(p),P).
\end{cases}$
Since $\sigma(P,P)=(P,P)$, we have $\rho(P)=P$. 
So we can write
$\rho=
\begin{pmatrix}
	1 & \b \\
	0 & \g 
\end{pmatrix}$ 
\,($\b\in k$, $\g \in k \setminus \{0\}$).
%\end{itemize}
%%%%%%%%%%%%%%%%%%%%%%%%%%%%%%%%%%%%%%%%%%%%%%%%%%%%%%%%%%%%%%%%%%%%%%%%%%%%%%%%%%%%%%%%%%%%%%%%%%%%%%%%%%%%%%%%%%%%%%%%%%%%%%%%%%%%%%%%%%%%%%%%%%%%%%%%%%%%%%%%%%%%%%%%%%%%%%%%%%

%\item
\noindent 
(3)\,\,Let $E=(\{P\} \times \PP^1) \cup (\{Q\} \times \PP^1) \cup (\PP^1 \times \{P\}) \cup (\PP^1 \times \{Q\})$
We also use the following notations:
\begin{align*}
	\ell_1:=\{P\} \times \PP^1, \ell_2:=\{Q\} \times \PP^1, 
	\ell_3:=\PP^1 \times \{P\}, \ell_4:=\PP^1 \times \{Q\}.
\end{align*}
Let $\sigma \in \Aut_k^G E$. Then
%\begin{center}
$\sigma(\ell_3)=\ell_1$ and $\sigma(\ell_4)=\ell_2$.
%\end{center}
Moreover, we can write
$\begin{cases}
	\sigma(p,P)=(P,\rho(p)), \\
	\sigma(p,Q)=(Q,\rho'(p)),
\end{cases}$
for $\rho, \rho' \in \Aut_k \PP^1$.
%A type of $\sigma$ is divided into one of the following forms:
%\begin{align*}
%	&\textnormal{ Type $1$: } \sigma(\ell_1)=\ell_3 \textnormal{ and } \sigma(\ell_2)=\ell_4, \\
%	&\textnormal{ Type $2$: } \sigma(\ell_1)=\ell_4 \textnormal{ and } \sigma(\ell_2)=\ell_3.
%\end{align*}
%\begin{itemize}
%	\item Type 1: In this case,
%	$$\begin{cases}
	%		\sigma(\ell_1)=\ell_3, \\
	%		\sigma(\ell_2)=\ell_4, \\
	%		\sigma(\ell_3)=\ell_1, \\
	%		\sigma(\ell_4)=\ell_2.
	%	\end{cases}$$
%\end{itemize}

\smallskip

\noindent
(i)\,\,Assume that $\sigma(\ell_1)=\ell_3, \sigma(\ell_2)=\ell_4, \sigma(\ell_3)=\ell_1, \sigma(\ell_4)=\ell_2$.
In this case, $\sigma$ is written as 
$\begin{cases}
	\sigma(P,p)=(p,P), \\
	\sigma(Q,p)=(p,Q), \\
	\sigma(p,P)=(P,\rho(p)), \\
	\sigma(p,Q)=(Q,\rho'(p)).
\end{cases}$
%where $\tau, \tau' \in \Aut_k \PP^1$.
Since
%\begin{align*}
$\sigma(P,P)=(P,P)$, $\sigma(P,Q)=(Q,P)$, 
$\sigma(Q,P)=(P,Q)$, $\sigma(Q,Q)=(Q,Q)$,
%\end{align*}
we have 
%\begin{align*}
$
	\rho(P)=P,\quad \rho(Q)=Q, \rho'(P)=P,\quad \rho'(Q)=Q
$. 
%\end{align*}
So we can write $\rho=
\begin{pmatrix}
	1 & 0 \\
	0 & \a
\end{pmatrix}$ and $\rho'=
\begin{pmatrix}
	1 & 0 \\
	0 & \b 
\end{pmatrix}$ 
\,\,($\a, \b \in k$, $\a\b \neq 0$).

\smallskip

\noindent
(ii)\,\,Assume that $\sigma(\ell_1)=\ell_4, \sigma(\ell_2)=\ell_3, \sigma(\ell_3)=\ell_1, \sigma(\ell_4)=\ell_2$.
In this case, $\sigma$ is written as follows:
$\begin{cases}
	\sigma(P,p)=(p,Q), \\
	\sigma(Q,p)=(p,P), \\
	\sigma(p,P)=(P,\rho(p)), \\
	\sigma(p,Q)=(Q,\rho'(p)).
\end{cases}$
%where $\tau, \tau' \in \Aut_k \PP^1$.
Since
	$\sigma(P,P)=(P,Q)$, $\sigma(P,Q)=(Q,Q)$, 	
	$\sigma(Q,P)=(P,P)$, $\sigma(Q,Q)=(Q,P)$,
we have 
	$\rho(P)=Q$, $\rho(Q)=P$, 		
	$\rho'(P)=Q$, $\rho'(Q)=P$,
so we can write $\rho=
\begin{pmatrix}
	0 & 1 \\
	\a & 0
\end{pmatrix}$ and $\rho'=
\begin{pmatrix}
	0 & 1 \\
	\b & 0 
\end{pmatrix}$
\,\,($\a, \b \in k$, $\a\b \neq 0$).
%\end{enumerate}
\end{proof}

%%%%%%%%%%%%%%%%%%%%%%%%%%%%%%%%%%%%
\subsection{{\bf Step 3:} Find the defining relations of $\cA(E,\sigma)$ for each $\sigma \in \Aut_k^G E$. }

%\noindent
%\fbox{{\bf Step 3:} Find the defining relations of $\cA(E,\sigma)$ for each $\sigma \in \Aut_k^G E$}

%\textcolor{magenta}{（202050507-Matsuno note のp.10 Thm 1.11の証明がStep 2）}

\begin{theorem}\label{thm.DR}
	Let $A=\kxy/(g_1,g_2)=\cA(E,\sigma)$ be a $3$-dimensional cubic AS-regular algebra
	of Type S\,$'$, T\,$'$ or FL. 
	%whose point scheme $E$ (resp. automorphism $\sigma$) is one of the form listed in Proposition \ref{prop.geom}
	%(resp. Proposition \ref{prop.auto}).
%	Assume that $(E,\sigma)$ is of Type S\,$'$, T\,$'$ or FL. 
	Then {\rm Table 1} gives the list of defining relations $g_1, g_2$ and conditions.
	Moreover, Type T\,$'$ is further divided into Type T\,$'_{1}$ and Type T\,$'_{2}$
	in terms of the form of $E$,
	and Type FL is further divided into Type FL$_{1}$ and Type FL$_{2}$
	in terms of the form of $\sigma$.
%\vspace{-15pt}

	\begin{center}
		{\renewcommand\arraystretch{1.1}
			{\small
				\begin{longtable}{|p{1.0cm}|p{8.5cm}|p{2.8cm}|}
					\multicolumn{3}{c}{{\rm Table 1: List of defining relations $g_1,\,g_2$, and conditions}}
					\\ \hline
					%%%%%%%%%%%%%%%%%%%%%%%%%%%%%%%%%%%%%%%%%%%%%%%%%%%%%%%%%%%%%%%%%%%%%%%%%%%%%%%
					{\rm Type} & {\rm Defining relations $g_1$ and $g_2$} & {\rm Conditions} \\ \hline\hline
					%%%%%%%%%%%%%%%%%%%%%%%%%%%%%%%%%%%%%%%%%%%%%%%%%%%%%%%%%%%%%%%%%%%%%%%%%%%%%%%
					{\rm S$'$} &
					$\begin{cases}
						g_1=x^2y-\a yx^2+(\a-1)y^3, \\
						g_2=xy^2-y^2x
					\end{cases}$ &
					$\a \in k\setminus \{0\}$
					\\ \hline
					%%%%%%%%%%%%%%%%%%%%%%%%%%%%%%%%%%%%%%%%%%%%%%%%%%%%%%%%%%%%%%%%%%%%%%%%%%%%%%%%
					{\rm T$'_{1}$} &
					$\begin{cases}
						g_1=x^2y-\d^2 yx^2+\a yxy-\a\d y^2x, \\
						g_2=xy^2-\d^2y^2x
					\end{cases}$ &
					$\a \in k$, $\d \in k\setminus \{0\}$
					\\ \hline
					%%%%%%%%%%%%%%%%%%%%%%%%%%%%%%%%%%%%%%%%%%%%%%%%%%%%%%%%%%%%%%%%%%%%%%%%%%%%%%%%%%%
					{\rm T$'_{2}$} &
					$\begin{cases}
						g_1=x^2y-yx^2+\a yxy+(2-\a)y^2x+(\a-2)y^3, \\
						g_2=xy^2-y^2x+2y^3
					\end{cases}$ &
					$\a \in k$
					\\ \hline
					{\rm FL$_{1}$} &
					$\begin{cases}
						g_1=x^2y-\a yx^2, \\
						g_2=xy^2-\b y^2x
					\end{cases}$ &
					$\a, \b \in k$, $\a\b \neq 0$
					\\ \hline
					%%%%%%%%%%%%%%%%%%%%%%%%%%%%%%%%%%%%%%%%%%%%%%%%%%%%%%%%%%%%%%%%%%%%%%%%%%%%%%%
					{\rm FL$_{2}$} &
					$\begin{cases}
						g_1=yxy-\a x^3, \\
						g_2=\b xyx-y^3
					\end{cases}$ &
					$\a,\b \in k$, $\a\b \neq 0$
					\\ \hline
					%%%%%%%%%%%%%%%%%%%%%%%%%%%%%%%%%%%%%%%%%%%%%%%%%%%%%%%%%%%%%%%%%%%%%%%%%%%%%%%
				\end{longtable}
		}}
	\end{center}
	%\vspace{-35pt}
\end{theorem}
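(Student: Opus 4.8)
The plan is to carry out Steps 1--3 of the six-step program for each of the reducible configurations in question, building on Lemmas \ref{lem.step0} and \ref{lem.step1} which have already done the work of Steps 1 and 2. First I would settle the case analysis: a curve $E$ of Type S$'$ is a union of an irreducible $(1,1)$-curve $C_\tau$, a $(1,0)$-line $\ell$ and a $(0,1)$-line $\ell'$ with three distinct intersection points; Type T$'$ is the same configuration but with a single intersection point; and Type FL consists of two $(1,0)$-lines and two $(0,1)$-lines. Applying Lemma \ref{lem.step0}, up to $2$-equivalence we may assume $E$ is one of the normal forms listed there (in the Type T$'$ case there are two sub-families $E_1$, $E_2$, which will become Type T$'_1$ and Type T$'_2$; in the Type FL case one normal form but, by Lemma \ref{lem.step1}(4), two families of automorphisms $\sigma$, giving Type FL$_1$ and FL$_2$). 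Then Lemma \ref{lem.step1} lists all $\sigma \in \Aut_k^G E$ for each normal form; for the purpose of producing a \emph{list of defining relations}, the branches of each $\sigma$ that differ only by composing with the automorphism swapping $\PP^1\times\{P\}$ and $\{P\}\times\PP^1$ produce isomorphic (in fact, oppositely-presented) algebras, so I would check that only one branch per case needs to be recorded — this reconciles the two-branch output of Lemma \ref{lem.step1} with the single row per type in Table 1.

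The core computation is Step 3: for each geometric pair $(E,\sigma)$ in the reduced list, compute $R=\{f\in V^{\otimes 3}\mid f(p_1,p_2,(\pi_2\circ\sigma)(p_1,p_2))=0\text{ for all }(p_1,p_2)\in E\}$ from Definition \ref{def.GA}(2) and extract a basis $\{g_1,g_2\}$. Concretely, write a general cubic $f=\sum_{i,j,k}c_{ijk}\,e_i^\ast\otimes e_j^\ast\otimes e_k^\ast$ in the dual coordinates, parametrize each irreducible component of $E$ rationally (e.g. $\ell=\{(1,0)\}\times\PP^1$ by $p_2=(s,t)$, and $C_\tau$ by $p\mapsto(p,\tau(p))$), substitute $r=(\pi_2\circ\sigma)(p_1,p_2)$ using the explicit $\sigma$ from Lemma \ref{lem.step1}, and impose that the resulting polynomial in the parameters vanishes identically. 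Each component contributes linear equations on the $c_{ijk}$; intersecting the solution spaces over all components yields a $2$-dimensional $R$ (the AS-regularity forces exactly dimension $2$, but at this stage one simply reads off the solution and verifies its dimension). Translating the dual basis back to monomials in $x,y$ produces the relations $g_1,g_2$, and collecting the surviving parameters $\a,\b,\d$ (coming from the free parameters in $\tau_{\a}$, $\tau_{\b,\g}$, $\mu_\a$, etc.) gives the ``Conditions'' column; one should also record which degenerate parameter values make $\cD$ fail to be standard or AS-regular, though the precise regularity check is deferred to Proposition \ref{prop.P}.

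The main obstacle I expect is organizational rather than conceptual: the bookkeeping in Step 3 is a dense but routine linear-algebra exercise that must be done cleanly for each of the (roughly half a dozen) sub-cases, keeping careful track of how the free parameters $\b,\g$ in the $\sigma$'s of Lemma \ref{lem.step1} collapse — after using up the residual $2$-equivalence freedom already exploited in Lemma \ref{lem.step0} — into the single parameter appearing in each row of Table 1 (e.g. in Type S$'$ only $\a$ survives, and the $(\a-1)y^3$ term must emerge from the substitution $\sigma(p,P)=(P,\tau_\a(p))$ on the line $\PP^1\times\{P\}$). A secondary subtlety is that for Type T$'$ the normal form $C_{\tau_\a}$ versus $C_{\tau_{1,1}}$ must be carried through separately (giving the quadratic-in-$\d$ coefficients of T$'_1$ versus the $+2y^3$ inhomogeneity of T$'_2$), and for Type FL the two automorphism branches of Lemma \ref{lem.step1}(4) genuinely give different relation shapes ($x^2y-\a yx^2$, $xy^2-\b y^2x$ versus $yxy-\a x^3$, $\b xyx-y^3$), so one must not conflate them. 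Once every pair has been processed, assembling the results into Table 1 completes the proof.
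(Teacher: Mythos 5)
Your overall strategy for Step 3 — write a general cubic $g=a_1x^3+\cdots+a_8y^3$, evaluate $g(p,q,(\pi_2\circ\sigma)(p,q))$ on each component of $E$ using the explicit $\sigma$ from Lemma \ref{lem.step1}, and solve the resulting linear system for the $a_i$ — is exactly what the paper does. But there is one genuine gap in how you propose to handle the two branches of $\sigma$ in Lemma \ref{lem.step1}. You claim that for Types S$'$ and T$'$ the branch (ii) automorphisms ``produce isomorphic (in fact, oppositely-presented) algebras'' to branch (i), so that only one branch needs to be recorded. This is false, and if it were true it would lead to a wrong table. The branch (ii) automorphisms cyclically permute the three components of $E$ rather than fixing $C_\tau$, and the paper computes their relation spaces explicitly: for Type S$'$ branch (ii) one gets $R$ spanned by $x^3-xy^2-\a^{-1}yxy-y^2x$ and $y^3$, and for T$'_1$ and T$'_2$ branch (ii) one similarly finds $y^3\in R$. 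Since $y^3=0$ in the quotient, $\cA(E,\sigma)$ is not a domain, hence not a $3$-dimensional cubic AS-regular algebra, and \emph{that} is why these branches are excluded. Had branch (ii) really given the opposite presentation of branch (i), it would again be AS-regular and would have to appear in Table 1. So you must carry out the branch (ii) computation in each of S$'$, T$'_1$, T$'_2$ and rule it out on AS-regularity grounds; it cannot be dismissed by a symmetry argument. (For Type FL you correctly keep both branches, which become FL$_1$ and FL$_2$.)

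A secondary inaccuracy: you attribute the collapse of the free parameters $\b,\g$ in $\tau_{\b,\g}$ to ``residual $2$-equivalence freedom.'' In the paper the mechanism is different: e.g.\ in Type T$'_1$, imposing vanishing on the component $C_{\tau_\a}$ yields $a_2\a^2+a_4=0$ together with $a_2\g+a_4=0$, so if $\g\neq\a^2$ then $a_2=0$ and the solution space drops to dimension one, contradicting that $A$ has two independent cubic relations; hence $\g=\a^2$ is forced by the requirement that $(E,\sigma)$ actually support a cubic AS-regular algebra, not by a change of coordinates. You should build this dimension check into Step 3 rather than assuming the solution space is automatically two-dimensional for every $\sigma$ listed in Lemma \ref{lem.step1}. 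With these two corrections the plan matches the paper's proof.
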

%%%%%%%%%%%%
\begin{proof}
	Let $g=a_1x^3+a_2x^2y+a_3xyx+a_4yx^2+a_5xy^2+a_6yxy+a_7y^2x+a_8y^3$
	be a homogeneous polynomial of $\kxy$ of degree $3$, and $P=(1,0), Q=(0,1) \in \PP^1$.
	For any $(p,q) \in E$, assume that $g(p,\sigma(p,q))=0$. 
	
%	\begin{enumerate}[{\rm (1)}]
	%\item 
	
	\smallskip
	
	\noindent
	(1) (Type S$'$)\,\,
	We prove by the similar way of Type T$_{1}'$ as in (2-1). 

	\smallskip
	
	\noindent
	(2-1) (Type T$_{1}'$)\,\,
	Let $E=\{P\} \times \PP^1 \cup \PP^1 \times \{P\} \cup C_{\tau_{\a}}$ and $\tau_{\a}=
	\begin{pmatrix}
		1 & 0 \\
		0 & \a
	\end{pmatrix}$. Assume that $\sigma$ is given by 
		$$\begin{cases}
			\sigma(p,P)=(P,\tau_{\b,\g}(p)), \\
			\sigma(P,p)=(p,P), \\
			\sigma(p,\tau_{\a}(p))=(\tau_{\a}(p),\tau_{\a}^2(p)),
		\end{cases}\,
		\tau_{\b,\g}=
		\begin{pmatrix}
			1 & \b \\
			0 & \g 
		\end{pmatrix}\quad(\b\in k,\,\g \in k\setminus \{0\}).$$ 
	In this case, we have 
	$$
	\begin{cases}
		0=g(P,\sigma(P,P))=g(P,P,\tau_{\b,\g}(P))=g(P,P,P)=a_1, \\
		0=g(Q,\sigma(Q,P))=g(Q,P,\tau_{\b,\g}(Q))=a_4\b+a_6\g, \\
		0=g((1,1),\sigma((1,1),P))=g((1,1),P,(1+\b,\g))=a_2\g+a_4, \\
		0=g(P,\sigma(P,Q))=g(P,Q,P)=a_3, \\
		0=g(Q,\sigma(Q,\tau_{\a}(Q))=g(Q,Q,Q)=a_8.
	\end{cases}
	$$
	For $p=(1,\l)$ with $\l \neq 0$, $\tau_{\a}(p)=(1,\l\a)$, $\tau_{\a}^2(p)=(1,\l\a^2)$ hold.
	We have 
%	\begin{align*}
%		0&=g(p,\sigma(p,\tau_{\a}(p)))=g(p,\tau_{\a}(p),\tau_{\a}^2(p)) \\
%		&=(a_5\a^3+a_6\a^2+a_7\a)\l^2+(a_2\a^2+a_4)\l,
%	\end{align*}
$$
		0=g(p,\sigma(p,\tau_{\a}(p)))=g(p,\tau_{\a}(p),\tau_{\a}^2(p))
		=(a_5\a^3+a_6\a^2+a_7\a)\l^2+(a_2\a^2+a_4)\l,
$$
	so $a_5\a^2+a_6\a+a_7=0$ and $a_2\a^2+a_4=0$. If $\g-\a^2 \neq 0$, then $a_2=0$.
	%In this case, 
Hence, $g=a_5(xy^2-\a^2y^2x)$, so this contradicts.
	%\begin{center}
	%	$g=a_5(xy^2-\d^2y^2x)$
	%\end{center}
	When $\g-\a^2=0$, we have 
		$g=a_5(xy^2-\a^2y^2x)+a_6(x^2y-\a^2 yx^2+\b yxy-\a\b y^2x)$. 
	
	Next, assume that $\sigma$ is given by 
		$$\begin{cases}
			\sigma(p,P)=(P,\tau_{\b,\g}(p)), \\
			\sigma(P,p)=(p,\tau_{\a}(p)), \\
			\sigma(p,\tau_{\a}(p))=(\tau_{\a}(p),P), 
		\end{cases}\,
		\tau_{\b,\g}=
		\begin{pmatrix}
			1 & \b \\
			0 & \g 
		\end{pmatrix}\quad(\b\in k, \g \in k\setminus \{0\}).$$ 
	In this case, we have 
	$$
	\begin{cases}
		0=g(P,\sigma(P,P))=g(P,P,\tau_{\b,\g}(P))=g(P,P,P)=a_1, \\
		0=g(Q,\sigma(Q,P))=g(Q,P,\tau_{\b,\g}(Q))=a_4\b+a_6\g, \\
		0=g((1,1),\sigma((1,1),P))=g((1,1),P,(1+\b,\g))=a_2\g+a_4, \\
		0=g(P,\sigma(P,Q))=g(P,Q,(1,1))=a_5, \\
		0=g(P,\sigma(P,(1,1)))=g(P,(1,1),(2,1))=a_2\a+a_3, \\
		0=g(Q,\sigma(Q,\tau_{\a}(Q)))=g(Q,\tau_{\a}(Q),P)=a_7, \\
		0=g((1,1),\sigma((1,1),\tau_{\a}(1,1)))=g((1,1),\tau_{\a}(1,1),P)=a_3\a+a_4.
	\end{cases}
	$$
	If $\g+\a^2 \neq 0$, then $a_2=0$. 
%In this case, 
Hence, we have $g=a_8y^3$, so this contradicts.
	When $\g+\a^2=0$, we have 
	%\begin{center}
		$g=a_2(x^2y+\a^2yx^2-\a xyx+\b yxy)+a_8y^3$. 
	%\end{center}
	Since $\cA(E,\sigma)$ is not a domain,
	$\cA(E,\sigma)$ does not become a $3$-dimensional cubic AS-regular algebra.
	
	\medskip
	
	\noindent
	(2-2) (Type T$_{2}'$)\,\,
	Let $E=\{P\} \times \PP^1 \cup \PP^1 \times \{P\} \cup C_{\tau_{1,1}}$ and $\tau_{1,1}=
	\begin{pmatrix}
		1 & 1 \\
		0 & 1
	\end{pmatrix}$. Assume that $\sigma$ is given by 
		$$\begin{cases}
			\sigma(p,P)=(P,\tau_{\b,\g}(p)), \\
			\sigma(P,p)=(p,P), \\
			\sigma(p,\tau_{1,1}(p))=(\tau_{1,1}(p),\tau_{1,1}^2(p)),
		\end{cases}\,
		\tau_{\b,\g}=
		\begin{pmatrix}
			1 & \b \\
			0 & \g 
		\end{pmatrix}\quad(\b\in k, \g \in k\setminus \{0\}).$$ 
	In this case, we have 
	$$
	\begin{cases}
		0=g(P,\sigma(P,P))=g(P,P,\tau(P))=g(P,P,P)=a_1, \\
		0=g(Q,\sigma(Q,P))=g(Q,P,(\b,\g))=a_4\b+a_6\g, \\
		0=g((1,1),\sigma((1,1),P))=g((1,1),P,(1+\b,\g))=a_2\g+a_4, \\
		0=g(P,\sigma(P,Q))=g(P,Q,P)=a_3.
	\end{cases}
	$$
	For $p=(1,\l)$ with $\l \neq 0$, $\tau_{1,1}(p)=(1+\l,\l)$ and $\tau_{1,1}^2(p)=(1+2\l,\l)$ hold.
	We have 
	\vspace{-5pt}
	\begin{align*}
		0&=g(p,\sigma(p,\tau_{1,1}(p)))=g(p,\tau_{1,1}(p),\tau_{1,1}^2(p)) \\
		&=(a_4(2-\b\g^{-1})+2a_7+a_8)\l^2+(a_4(-\b\g^{-1}-\g^{-1}+3)+a_5+a_7)\l 
%\\
		%&\hspace{23em}
		+a_4(-\g^{-1}+1),
		%\vspace{-5pt}
	\end{align*}
	so $a_4(2-\b\g^{-1})+2a_7+a_8=0$, $a_4(-\b\g^{-1}-\g^{-1}+3)+a_5+a_7=0$ and $a_4(-\g^{-1}+1)=0$.
	If $-\g^{-1}+1 \neq 0$, then $a_4=0$. In this case, we have $g=a_5(xy^2-y^2x+2y^3)$, so
	this contradicts. When $-\g^{-1}+1=0$, that is, $\g=1$,
		$a_4(2-\b)+a_5+a_7=0$, 
		$a_4(2-\b)+2a_7+a_8=0$.
	Therefore, we have 
		$$g=a_4(-x^2y+yx^2-\b yxy+(\b-2)y^2x+(2-\b)y^3)+a_5(xy^2-y^2x+2y^3).$$
	
	Next, assume that $\sigma$ is given by  
	%\vspace{-5pt}
		$$\begin{cases}
			\sigma(p,P)=(P,\tau_{\b,\g}(p)), \\
			\sigma(P,p)=(p,\tau_{1,1}(p)), \\
			\sigma(p,\tau_{1,1}(p))=(\tau_{1,1}(p),P),
		\end{cases}\,
		\tau_{\b,\g}=
		\begin{pmatrix}
			1 & \b \\
			0 & \g 
		\end{pmatrix}
		\quad
		(\b\in k, \g \in k \setminus \{0\}).
		%\vspace{-5pt}
		$$ 
	In this case, we have 
	$$
	\begin{cases}
		0=g(P,\sigma(P,P))=g(P,P,\tau_{\b,\g}(P))=g(P,P,P)=a_1, \\
		0=g(Q,\sigma(Q,P))=g(Q,P,(\b,\g))=a_4\b+a_6\g, \\
		0=g((1,1),\sigma((1,1),P))=g((1,1),P,(1+\b,\g))=a_2\g+a_4, \\
		0=g(P,\sigma(P,Q))=g(P,Q,(1,1))=a_3+a_5, \\
		0=g(P,\sigma(P,(1,1)))=g(P,(1,1),(2,1))=a_3-a_4\g^{-1}, \\
		0=g(Q,\sigma(Q,\tau_{1,1}(Q)))=g(Q,\tau_{1,1}(Q),P)=a_4+a_7, \\
		0=g((1,1),\sigma((1,1),\tau_{1,1}(1,1)))=g((1,1),\tau_{1,1}(1,1),P)=a_3+a_4.
	\end{cases}
	$$
	If $\g^{-1}+1\neq 0$, then $a_4=0$. Hence, we have $g=a_8y^3$, so this contradicts.
	When $\g=-1$, we have 
	%\begin{center}
		$g=a_4(x^2y+yx^2+\b yxy-xyx+xy^2-y^2x)+a_8y^3$.
	%\end{center}
	Since $\cA(E,\sigma)$ is not a domain, it does not become AS-regular.
%%%%%%%%%%%%%%%%%%%%%%%%%%%%%%%%%%%%%%%%%%%%%%%%%%%%%%%%%%%%%%%%%%%%%%%%%%%%%%%%%%%%%%%%%%%%%%%%%%%%%%%%%%%%%%%%%%%%%%%%%%%%%%%%%%%%%%%%%%%%%%%%%%%%%%%%%%%%%%%%%%%%%%%%%%%%%%%%%%%%%%%%%%%%%%%%%%%%%%%%%%%% 
    %\item 
    
    \smallskip
    
    \noindent
    (3-1) (Type FL$_{1}$)\,\,
    Let $E=\{P\} \times \PP^1 \cup \{Q\} \times \PP^1 \cup \PP^1 \times \{P\} \cup \PP^1 \times \{Q\}$. Assume that $\sigma$ is given by 
    	$$\begin{cases}
    		\sigma(P,p)=(p,P), \\
    		\sigma(Q,p)=(p,Q), \\
    		\sigma(p,P)=(P,\tau_{\a}(p)), \\
    		\sigma(p,Q)=(Q,\tau_{\b}(p)),
    	\end{cases}\,
    	\tau_{\a}=
    	\begin{pmatrix}
    		1 & 0 \\
    		0 & \a
    	\end{pmatrix},\,\tau_{\b}=
    	\begin{pmatrix}
    		1 & 0 \\
    		0 & \b 
    	\end{pmatrix}.$$
    In this case, we have 
    $
    \begin{cases}
    	0=g(P_1,\sigma(P,P))=g(P,P,P)=a_1, \\
    	0=g(P_1,\sigma(P,Q))=g(P,Q,P)=a_3, \\
    	0=g(Q,\sigma(Q,P))=g(Q,P,Q)=a_6, \\
    	0=g(Q,\sigma(Q,Q))=g(Q,Q,Q)=a_8.
    \end{cases}
    $
    
    \noindent
    For $p=(1,1) \in \PP^1$, we have 
    $
    \begin{cases}
    	0=g(p,\sigma(p,P))=g(p,P,\tau_{\a}(p))=a_2\a+a_4, \\
    	0=g(p,\sigma(p,Q))=g(p,Q,\tau_{\b}(p))=a_5\b+a_7.
    \end{cases}
    $
    
    \noindent
    Therefore, we have
    %\begin{center}
    	$g=a_2(x^2y-\a yx^2)+a_5(xy^2-\b y^2x)$. 
    %\end{center}
    
    \smallskip
    
    \noindent
    (3-2) (Type FL$_{2}$)\quad
    Assume that $\sigma$ is given by 
    %\vspace{-5pt}
    	$$\begin{cases}
    		\sigma(P,p)=(p,Q), \\
    		\sigma(Q,p)=(p,P), \\
    		\sigma(p,P)=(P,\mu_{\a}(p)), \\
    		\sigma(p,Q)=(Q,\mu_{\b}(p)),
    	\end{cases}\,
    	\mu_{\a}=
    	\begin{pmatrix}
    		0 & 1 \\
    		\a & 0
    	\end{pmatrix},\,
	    \mu_{\b}=
    	\begin{pmatrix}
    		0 & 1 \\
    		\b & 0
    	\end{pmatrix}.
	%\vspace{-10pt}
	$$
    In this case, we have 
    $
    \begin{cases}
    	0=g(P,\sigma(P,P))=g(P,P,Q)=a_2, \\
    	0=g(P,\sigma(P,Q))=g(P,Q,Q)=a_5, \\
    	0=g(Q,\sigma(Q,P))=g(Q,P,P)=a_4, \\
    	0=g(Q,\sigma(Q,Q))=g(Q,Q,P)=a_7.
    \end{cases}
    $ 
    
    \noindent
    For $p=(1,1) \in \PP^1$, we have 
    $
    \begin{cases}
    	0=g(p,\sigma(p,P))=g(p,P,\mu_{\a}(p))=a_1+a_6\a, \\
    	0=g(p,\sigma(p,Q))=g(p,Q,\mu_{\b}(p))=a_3+a_8\b.
    \end{cases}
    $
    
    \noindent
    Therefore, 
    	$g=a_6(yxy-\a x^3)+a_8(-\b xyx+y^3)$.
%%%%%%%%%%%%%%%%%%%%%%%%%%%%%%%%%%%%%%%%%%%%%%%%%%%%%%%%%%%%%%%%%%%%%%%%%%%%%%%%%%%%%%%%%%%%%%%%%%%%%%%%%%%%%%%%%%%%%%%%%%%%%%%%%%%%
%    \end{enumerate}
\end{proof}
%%%%%%%

\subsection{{\bf Step 4:} Check AS-regularity of $\cA(E,\sigma)$. }
% via finding twisted superpotentials.}

%\noindent
%\fbox{{\bf Step 4:} Check AS-regularity of $\cA(E,\sigma)$.}

%\textcolor{magenta}{（202050507-Matsuno note のp.17 Prop 1.18の証明がStep 3）}

\begin{proposition}\label{prop.P}
	Let $X \in \{\text{S}\,', \text{T}\,'_1, \text{T}\,'_2, \text{FL}_1,\text{FL}_2\}$. Then 
%	\begin{enumerate}[{\rm (i)}]
		%\item 
		every Type X algebra is isomorphic to $\cD(\om)$ where a potential $\om$ is in {\rm Table 2}.
		%\item 
		Also, every potential $\om$ listed in Table $2$ is a twisted superpotential, 
and its derivation-quotient algebra $\mathcal{D}(\omega)$ is AS-regular. 
%	\end{enumerate}
	%If $A$ is written as the derivation-quotient algebra of some twisted superpotential, then such a TSP belongs to the following table.
	
%\vspace{-10pt}
	\begin{center}
		{\renewcommand\arraystretch{1.1}
			{\small
				\begin{longtable}{|p{1.0cm}|p{7.0cm}|p{3.0cm}|}
					\multicolumn{3}{c}
					{{\rm Table 2: List of potentials $\om$ and conditions}}
					\\ 
					\hline
					%%%%%%%%%%%%%%%%%%%%%%%%%%%%%%%%%%%%%%%%%%%%%%%%%%%%%%%%%%%%%%%%%%%%%%%%%%%%%%%
					{\rm Type} & {\rm Potentials $\om$} & {\rm Conditions} \\ \hline\hline
					%%%%%%%%%%%%%%%%%%%%%%%%%%%%%%%%%%%%%%%%%%%%%%%%%%%%%%%%%%%%%%%%%%%%%%%%%%%%%%%
					{\rm S$'$} &
					$x^2y^2+yx^2y-xy^2x+y^2x^2-2y^4$ &
					%{\rm\text{nothing}}
					\hfill \textnormal{---------------------} \hfill \rule{0pt}{10pt}
					\\ \hline
					%%%%%%%%%%%%%%%%%%%%%%%%%%%%%%%%%%%%%%%%%%%%%%%%%%%%%%%%%%%%%%%%%%%%%%%%%%%%%%%
					{\rm T$'_1$} & 
					$x^2y^2-yx^2y-xy^2x+y^2x^2-\a y^2xy+\a  yxy^2$ &
					$\a \neq 0$
					\\ \hline
					%%%%%%%%%%%%%%%%%%%%%%%%%%%%%%%%%%%%%%%%%%%%%%%%%%%%%%%%%%%%%%%%%%%%%%%%%%%%%%%%%%
					{\rm T$'_2$} & 
					$x^2y^2-yx^2y-xy^2x+y^2x^2+2xy^3+\a yxy^2-\a y^2xy-2y^3x+(\a+2)y^4$ &
					$\a \neq 2$
					\\ \hline
					%%%%%%%%%%%%%%%%%%%%%%%%%%%%%%%%%%%%%%%%%%%%%%%%%%%%%%%%%%%%%%%%%%%%%%%%%%%%%%%%%%%%
					{\rm FL$_1$} &
					$x^2y^2-\a yx^2y+\a xy^2x+\a^2 y^2x^2$ &
					$\a \neq 0$
					\\ \hline
					%%%%%%%%%%%%%%%%%%%%%%%%%%%%%%%%%%%%%%%%%%%%%%%%%%%%%%%%%%%%%%%%%%%%%%%%%%%%%
					{\rm FL$_2$} & 
					$-\a\b x^4+\b xyxy+\b yxyx-y^4$ &
					$\a \neq \b$, $\a\b \neq 0$
					\\ \hline
					%%%%%%%%%%%%%%%%%%%%%%%%%%%%%%%%%%%%%%%%%%%%%%%%%%%%%%%%%%%%%%%%%%%%%%%%%%%%%%%%
					
					\end{longtable}
				}}
			\end{center}
			%\vspace{-35pt}
\end{proposition}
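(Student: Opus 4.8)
The plan is to run through the five cases $X\in\{\text{S}\,',\text{T}\,'_1,\text{T}\,'_2,\text{FL}_1,\text{FL}_2\}$ one at a time, and in each to carry out three tasks: identify the derivation--quotient presentation, verify the twisted superpotential property, and verify AS-regularity. For the first task I would start from the defining relations $g_1,g_2$ of a Type~$X$ algebra given by Theorem~\ref{thm.DR}. Since a Type~$X$ algebra is AS-regular, Theorem~\ref{thm.TSP1}(2) shows it equals $\cD(\om')$ for a unique twisted superpotential $\om'$; as $\om'$ is determined by the pair $(\partial_{x}\om',\partial_{y}\om')$, whose span is the relation space ${\rm span}\{g_1,g_2\}$, requiring $\om'$ to be a twisted superpotential forces constraints on the parameters of Table~1, and in particular rules out certain specializations (those for which the point scheme degenerates --- typically to $\PP^1\times\PP^1$, so that the algebra is of Type~P). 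Applying a graded algebra isomorphism as permitted by Lemma~\ref{thm.GA}(1), i.e.\ an invertible linear change of the variables $x,y$, I would normalize the surviving parameters to exactly those recorded in Table~2, and then confirm by direct computation that the left partial derivatives $\partial_{x}\om,\partial_{y}\om$ of the corresponding Table~2 potential $\om$ satisfy ${\rm span}\{\partial_{x}\om,\partial_{y}\om\}={\rm span}\{g_1,g_2\}$; hence $\cD(\om)=\kxy/(g_1,g_2)$ is the given Type~$X$ algebra, which together with Theorem~\ref{thm.DR} yields the first assertion.

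For the second task I would, for each $\om$ in Table~2, compute all four partial derivatives $\partial_{x}\om,\partial_{y}\om,\om\partial_{x},\om\partial_{y}$ and check that $(\partial_{x}\om,\partial_{y}\om)=(\om\partial_{x},\om\partial_{y})$ as homogeneous two-sided ideals of $\kxy$, equivalently that the two pairs span the same $2$-dimensional subspace of $V^{\otimes 3}$; by Lemma~\ref{lem.TSP} this is exactly the statement that $\om$ is a twisted superpotential. If one prefers an explicit witness, the matrix $\tau\in\GL_2(k)$ with $(\tau\otimes\id^{\otimes 3})(\varphi(\om))=\om$ can be read off from the linear relation between the two pairs of derivatives, exactly as in the proof of Lemma~\ref{lem.TSP}.

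For the third task, linear independence of $\partial_{x}\om$ and $\partial_{y}\om$ is immediate from the explicit Table~2 expressions, so Proposition~\ref{prop.Stand} shows $\cD(\om)$ is standard. It then remains to write out the matrix $\bM(\om)$, regard its four entries as bidegree $(1,1)$ forms on $\PP^1\times\PP^1$, and verify that they have no common zero there; then \cite[Theorem~1]{ATV1} gives that $\cD(\om)$ is a $3$-dimensional AS-regular algebra, so $\om$ is a regular twisted superpotential, which with the previous paragraph proves the second assertion. The part I expect to be most delicate is not any single computation but the bookkeeping in the first task: one must keep careful track of which parameters in Table~1 come from the geometric pair, which can be absorbed into a graded isomorphism, and which degenerate specializations must be discarded, so that the surviving families coincide exactly with those of Table~2; the derivative computations of the second task and the common-zero-locus checks of the third are routine but must be carried out for each of the five types.
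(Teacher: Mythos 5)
Your proposal follows essentially the same route as the paper: write $\om$ as a combination $axg_1+bxg_2+cyg_1+dyg_2$ so that its left derivatives span the relation space, impose the twisted-superpotential condition via Lemma~\ref{lem.TSP} to constrain the parameters, then use Proposition~\ref{prop.Stand} together with the matrix $\bM(\om)$ and its common zero locus (discarding the degenerations with $\det\bM(\om)$ identically zero or of the wrong shape, which are of Type~P) to establish AS-regularity. The only cosmetic difference is your proposed normalization of surviving parameters by a change of variables via Lemma~\ref{thm.GA}(1), which turns out to be unnecessary: in each case the twisted-superpotential condition already pins the parameters down to exactly the families of Table~2.
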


\begin{proof}
%\begin{enumerate}[{\rm (1)}]
	%\item 
	(1) (Type S$'$)\,\,
	We prove by the similar way of Type T$_{1}'$ as in (2-1). 
%	Let $A$ be a geometric algebra of Type S$'$.
%	By Theorem \ref{thm.DR}, the defining relations of $A$ are
%		$\begin{cases}
%			g_1=x^2y-\a yx^2+(\a-1)y^3, \\
%			g_2=xy^2-y^2x,
%		\end{cases}
%	(\a \in k\setminus \{0\})$.
%	If $A$ is a $3$-dimensional cubic AS-regular algebra, then there exists a twisted superpotential $\om \in \kxy_4$
%	such that $A=\cD(\om)$. In this case, $\om$ can be written as
%		$\om=axg_1+bxg_2+cyg_1+dyg_2$
%	where $a,b,c,d \in k$. Since
%	$$\begin{cases}
%		\om \partial_x=-a\a xyx-bxy^2-c\a y^2x-d y^3, \\
%		\om \partial_y=ax^3+bx^2y+cyx^2+a(\a-1)xy^2+dyxy+c(\a-1)y^3,
%	\end{cases}$$
%	it follows from Lemma \ref{lem.TSP} that $a=d=0$. In this case, 
%		$$\om=bx^2y^2+cyx^2y-bxy^2x-c\a y^2x^2+c(\a-1)y^4
%		\quad\,\,  (b,c \in k,\,bc \neq 0).
%		$$
%%	where $b,c \in k$ and $bc \neq 0$.
%	By Lemma \ref{lem.TSP}, $\om$ is a twisted superpotential if and only if $\a=\pm 1$, that is, 
%		$$\om=
%		\begin{cases}
%			x^2y^2-xy^2x-yx^2y+y^2x^2 &\textnormal{ if } \a=1, \\
%			x^2y^2-xy^2x+yx^2y+y^2x^2-2y^4 &\textnormal{ if } \a=-1. 
%		\end{cases}$$
%		
%		If $\a=1$, then $\bM(\om)=
%		\begin{pmatrix}
%			- y^2 & xy \\
%			 yx & - x^2
%		\end{pmatrix}
%		$ and $\det(\bM(\om))=0$. 
%		This means that $A$ is of Type P.
%			
%		If $\a=-1$, then $\bM(\om)=
%		\begin{pmatrix}
%				- y^2 & xy \\
%				yx & x^2-2y^2
%			\end{pmatrix}
%			$ and $\det(\bM(\om))=-2(\xx+\yy)(\yy)$. 
%			In this case, $\partial_x \om, \partial_y \om$ are linearly independent and the common zero locus of entries of $\bM(\om)$ in $\PP^1 \times \PP^1$
%			is equal to empty, so $A=\cD(\om)$ is AS-regular.
%		
%%		\item 

\smallskip

\noindent
(2-1) (Type T$_{1}'$)\,\,
Let $A$ be a geometric algebra of Type T$_{1}'$.
		%\textcolor{red}{
		By Theorem \ref{thm.DR}, the defining relations of $A$ are
		
			$\begin{cases}
				g_1=x^2y-\a^2yx^2+\b yxy-\a\b y^2x, \\
				g_2=xy^2-\a^2y^2x, 
			\end{cases}
			(\b \in k,\,\a \in k\setminus \{0\})
			$. 
		If $A$ is a $3$-dimensional cubic AS-regular algebra, then there exists a twisted superpotential $\om \in \kxy_4$
		such that $A=\cD(\om)$. In this case, $\om$ can be written as
		$\om=axg_1+bxg_2+cyg_1+dyg_2  \,(a,b,c,d \in k)$. 
		%where $a,b,c,d \in k$. 
		Since
			$$\begin{cases}
				\om \partial_x=-a\a^2xyx-a\a\b xy^2-b\a^2xy^2-c\a^2y^2x-c\a\b y^3-d\a^2y^3, \\
				\om \partial_y=ax^3+a\b xyx+b x^2y+c yx^2+c\b y^2x+d yxy,
			\end{cases}$$
		it follows from Lemma \ref{lem.TSP} that $a=0$ and $c\b+d\a^2=0$.
		Moreover, $c=-b\a^2$ and $d=b\b$,
		so we have \vspace{-5pt}
			$$\om=bx^2y^2-b\a^2yx^2y-b\a^2xy^2x+b\a^4y^2x^2+b\a\b yxy^2-b\b\a^2 y^2xy
			\quad (b \in k \setminus \{0\})
			%\vspace{-5pt}
			$$
		%where $0 \neq b \in k$. 
		By Lemma \ref{lem.TSP}, we also have $\a=1$, that is,
				%\vspace{-5pt}
			$$
			\om=x^2y^2-yx^2y-xy^2x+y^2x^2+\b yxy^2-\b y^2xy.
					\vspace{-5pt}
			$$
		In this case, $\bM(\om)=
			\begin{pmatrix}
				-y^2 & xy \\
				yx & -x^2+\b xy-\b yx
			\end{pmatrix}
			$ and $\det(\bM(\om))=-\b(\yy)(\XY-\yx)$.
			Therefore, $A$ is of Type P if and only if $\b=0$.
			So, we may assume that $\b \neq 0$.
			In this case, $\partial_x \om, \partial_y \om$ are linearly independent and the common zero locus of entries of $\bM(\om)$ in $\PP^1 \times \PP^1$
			is equal to empty. 
			Therefore,  $A=\cD(\om)$ is AS-regular.
			%}
		
		%\item 

\smallskip

\noindent
(2-2) (Type T$_{2}'$)\,\,
Let $A$ be a geometric algebra of Type T$_{2}'$. 
By Theorem \ref{thm.DR}, the defining relations of $A$ are

$\begin{cases}
g_1=x^2y-yx^2+\a yxy+(2-\a)y^2x+(\a-2)y^3, \\
g_2=xy^2-y^2x+2y^3, 
\end{cases}
\,(\a \in k)
$. 
%where $\a \in k$.
If $A$ is a $3$-dimensional cubic AS-regular algebra, then there exists a twisted superpotential $\om \in \kxy_4$
		such that $A=\cD(\om)$. In this case, $\om$ can be written as
		$\om=axg_1+bxg_2+cyg_1+dyg_2
		\quad (a,b,c,d \in k)
		$. 
		%where $a,b,c,d \in k$. 
		Since
		$$\begin{cases}
				\om \partial_x=-axyx+a(2-\a)xy^2-bxy^2-cy^2x-(c(\a-2)+d)y^3, \\
				\om \partial_y=ax^3+a\a xyx+bx^2y+cyx^2+(a(\a-2)+2b)xy^2+c\a y^2x \\
				\hfill
				+dyxy+(c(\a-2)+2d)y^3,
			\end{cases}$$
%		\begin{align*}
%			&\om \partial_x=-axyx+a(2-\a)xy^2-bxy^2-cy^2x-(c(\a-2)+d)y^3, \\
%			&\om \partial_y=ax^3+a\a xyx+bx^2y+cyx^2+(a(\a-2)+2b)xy^2+c\a y^2x \\
%			&\hspace*{8em}+dyxy+(c(\a-2)+2d)y^3,
%		\end{align*}

\noindent
		it follows from Lemma \ref{lem.TSP} that $a=0$, $c=-b$ and $d=\a b$,
		so we have 
			$$\om=x^2y^2-yx^2y-xy^2x+y^2x^2+2xy^3+\a yxy^2-\a y^2xy-2y^3x+(\a+2)y^4.$$
		Then $\bM(\om)=
		\begin{pmatrix}
			-y^2 & xy+2y^2 \\
			yx-2y^2 & -x^2+\a xy-\a yx+(\a+2)y^2
		\end{pmatrix}
		$ and 
				%\vspace{-5pt}
		$$
		\det(\bM(\om))=(2-\a)(\yy)(\XY-\yx+\yy).
		%\vspace{-5pt}
		$$
		Therefore, $A$ is of Type P if and only if $\a=2$. 
		So we may assume that $\a \neq 2$.
		In this case, $\partial_x \om, \partial_y \om$ are linearly independent and the common zero locus of entries of $\bM(\om)$ in $\PP^1 \times \PP^1$
		is equal to empty, so $A=\cD(\om)$ is AS-regular.
						
\smallskip

\noindent
(3-1) (Type FL$_{1}$)\,\,
		Let $A$ be a geometric algebra of Type FL$_{1}$. 
		By Theorem \ref{thm.DR}, the defining relations of $A$ are 
		$\begin{cases}
			g_1=x^2y-\a yx^2, \\
			g_2=xy^2-\b y^2x,
		\end{cases}
		\, (\a,\b \in k,\,\a\b \neq 0)
		$. 
	%where $\a,\b \in k$  and $\a\b \neq 0$.
	If $A$ is a $3$-dimensional cubic AS-regular algebra, then there exists a twisted superpotential $\om \in \kxy_4$
	such that $A=\cD(\om)$. In this case, $\om$ can be written as
		$\om=axg_1+bxg_2+cyg_1+dyg_2
		\quad (a,b,c,d \in k)
		$. 
	%where $a,b,c,d \in k$. 
	Since
%	\begin{align*}
		$\om \partial_x=-a\a xyx-b\b xy^2-c \a y^2x-d\b y^3$, 
		$\om \partial_y=ax^3+bx^2y+cy^2x+dyxy$, 
%	\end{align*}
	it follows from Lemma \ref{lem.TSP} that $a=d=0$, $c=-b\a$ and $\a^2=\b^2$, 
	so we may assume that
		$\om=bx^2y^2-b\b xy^2x-b\a yx^2y+b\a^2 y^2x^2 \quad (b \in k\setminus \{0\})
		$.
	Then 
%\vspace{-5pt}
		$$\om=
		\begin{cases}
			x^2y^2-\a xy^2x-\a yx^2y+\a^2 y^2x^2 &\textnormal{ if } \b=\a, \\
			x^2y^2+\a xy^2x-\a yx^2y+\a^2 y^2x^2 &\textnormal{ if } \b=-\a.
		\end{cases}
		%\vspace{-5pt}
		$$

		If $\b=\a$, then $\bM(\om)=
		\begin{pmatrix}
			\partial_x \om \partial_x & \partial_x \om \partial_y \\
			\partial_y \om \partial_x & \partial_y \om \partial_y
		\end{pmatrix}=
		\begin{pmatrix}
			-\a y^2 & xy \\
			\a^2 yx & -\a x^2
		\end{pmatrix}
		$ and $\det(\bM(\om))=0$. 
		This means that $A$ is of Type P.
		
		If $\b=-\a$, then $\bM(\om)=
		\begin{pmatrix}
			\a y^2 & xy \\
			\a^2 yx & -\a x^2
		\end{pmatrix}
		$ and $\det(\bM(\om))=-2\a^2(\xx)(\yy)$. 
		Therefore, $A$ is of Type P if and only if $\a=0$.
		So we may assume that $\a \neq 0$.
		In this case, $\partial_x \om, \partial_y \om$ are linearly independent and the common zero locus of entries of $\bM(\om)$ in $\PP^1 \times \PP^1$
		is equal to empty, so $A=\cD(\om)$ is AS-regular.

\smallskip

\noindent
(3-2) (Type FL$_{2}$)\,\,
Let $A$ be a geometric algebra of Type FL$_{2}$. 
By Theorem \ref{thm.DR}, the defining relations of $A$ are
		$\begin{cases}
			g_1=yxy-\a x^3, \\
			g_2=\b xyx-y^3,
		\end{cases}
		(\a,\b \in k,\a\b \neq 0)
		$. 
	%where $\a,\b \in k$  and $\a\b \neq 0$.
	If $A$ is a $3$-dimensional cubic AS-regular algebra, then there exists a twisted superpotential $\om \in \kxy_4$
	such that $A=\cD(\om)$. In this case, $\om$ can be written as
		$\om=axg_1+bxg_2+cyg_1+dyg_2
		\quad (a,b,c,d \in k)
		$.
	%where $a,b,c,d \in k$. 
	Since
	%\begin{align*}
		$\om \partial_x=-a\a x^3+b\b x^2y-c\a yx^2+d\b yxy$, 
		$\om \partial_y=axyx-bxy^2+cy^2x-dy^3$,
	%\end{align*}
	it follows from Lemma \ref{lem.TSP} that $b=c=0$ and $a=d\b$, so
		$\om=-\a\b x^4+\b xyxy+\b yxyx-y^4$. 
	Then $\bM(\om)=
	\begin{pmatrix}
		-\a\b x^2 & \b yx \\
		\b xy & - y^2
	\end{pmatrix}
	$ and $\det(\bM(\om))=\b(\a-\b)(\xx)(\yy)$. 
	Hence, $A$ is of Type P if and only if $\a = \b$.
	So we may assume that $\a \neq \b$.
	In this case, $\partial_x \om, \partial_y \om$ are linearly independent and the common zero locus of entries of $\bM(\om)$ in $\PP^1 \times \PP^1$
	is equal to empty. 
	Therefore, $A=\cD(\om)$ is AS-regular.
%\end{enumerate}
\end{proof}
%%%%%%%%%%%%%%%%%%%%%%%%%%%%%%%%%%%%%%%%%%%%%%%%%%%%%%%%%%%%%%%%%%%%%%%%%%%%%%%%%%%%%%%%%%%%%%%%%%%%%%%%%%%%%%%%%%%%%%%%%%%%%%%%%%%%%%%%
\section{Classifications of $3$-dimensional cubic AS-regular algebras of Tpe S$'$, T$'$ and FL. }\label{sec.Cond}
%
%In this section, 
%we will check {\bf Step 5} and {\bf Step 6} %of the six steps 
%in Section 3. 

\subsection{{\bf Step 5:} Classify them up to isomorphism of graded algebras. }
%in terms of their defining relations. }
\label{subsec.ISOM}
In this subsection, we will give the complete list of defining relations of $3$-dimensional cubic AS-regular algebras whose point schemes are not integral,
and classify them up to graded algebra isomorphism. 
%and graded Morita equivalence.

%\medskip
%
%\noindent
%\fbox{{\bf Step 5:} Classify them up to isomorphism of graded algebras in terms of their defining relations.}
%
%\medskip
%
%%%%%%%%%%%%%%%%%%%%%%%%%%%%%%%%%%%%%%%%%%%%%%%%%%%%%%%%%%%%%%%%%%%%%%%%%%%%%%%%%%%%%%%%%%%%%%%%%%%%%%%%%%%%%%%%%%%%%%%%%%%%%%%%%%

Remark that Lemma \ref{lem.step4-1} plays an important role to classify $3$-dimensional cubic AS-regular algebras up to isomorphisms.

\begin{lemma}\label{lem.step4-1}
	Let $P=(1,0), Q=(0,1) \in \PP^1$, $\rho=
	\begin{pmatrix}
		\a & \b \\
		\g & \d 
	\end{pmatrix}
	\in \Aut_k \PP^1$ and $\tau_{1,1}=
	\begin{pmatrix}
		1 & 1 \\
		0 & 1
	\end{pmatrix} \in \Aut_k \PP^1$.
	\vspace{-10pt}
	\begin{enumerate}[{\rm (1)}]
		\item Let $E=\PP^1 \times \{P\} \cup \{P\} \times \PP^1 \cup C_{\id}$.
		If $(\rho \times \rho)(E)=E$, then $\rho=
		\begin{pmatrix}
			1 & \b \\
			0 & \d 
		\end{pmatrix}$ for $\b \in k$ and $\d \in k \setminus \{0\}$.
		%\vspace{-10pt}
		\item Let $E=\PP^1 \times \{P\} \cup \{P\} \times \PP^1 \cup C_{\tau_{1,1}}$.
		If $(\rho \times \rho)(E)=E$, then $\rho=
		\begin{pmatrix}
			1 & \b \\
			0 & 1
		\end{pmatrix}$ for $\b \in k$.
		%\vspace{-5pt}
		\item Let $E=\PP^1 \times \{P\} \cup \PP^1 \times \{Q\} \cup \{P\} \times \PP^1 \cup \{Q\} \times \PP^1$.
		If $(\rho \times \rho)(E)=E$, then $\rho=
		\begin{pmatrix}
			1 & 0 \\
			0 & \d 
		\end{pmatrix}$ or $\rho=
		\begin{pmatrix}
			0 & 1 \\
			\g & 0
		\end{pmatrix}$ for $\g\in k\setminus \{0\}\text{ and }\d \in k$.
		%\vspace{-5pt}
	\end{enumerate}
\end{lemma}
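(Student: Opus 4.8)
The statement is purely about which projective-linear automorphisms $\rho$ of $\PP^1$ have the property that $\rho\times\rho$ fixes a given reducible bidegree-$(2,2)$ curve $E$. In each of the three cases $E$ is a union of some "vertical" lines $\{P_i\}\times\PP^1$, some "horizontal" lines $\PP^1\times\{Q_j\}$, and one graph $C_\sigma=\{(p,\sigma(p))\}$. The key observation is that $\rho\times\rho$ permutes the irreducible components of $E$, and it must send a $(1,0)$-curve to a $(1,0)$-curve, a $(0,1)$-curve to a $(0,1)$-curve, and the $(1,1)$-curve $C_\sigma$ to a $(1,1)$-curve (bidegree is preserved under $\rho\times\rho$). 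So the plan is: first enumerate the possible permutations of components, then translate each into a constraint on $\rho$, then solve the resulting matrix equations.

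\smallskip

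\textbf{Step 1 (components and their constraints).} For (1), $E$ has components $\ell=\PP^1\times\{P\}$, $\ell'=\{P\}\times\PP^1$ and $C_{\id}$. Since $C_{\id}$ is the \emph{unique} $(1,1)$-component, $(\rho\times\rho)(C_{\id})=C_{\id}$; and since $\ell$ is the unique $(0,1)$-component and $\ell'$ the unique $(1,0)$-component, $(\rho\times\rho)(\ell)=\ell$ and $(\rho\times\rho)(\ell')=\ell'$. The first of these says $\rho(P)=P$, i.e. $\rho$ fixes $(1,0)$, which forces $\g=0$; normalizing we may take $\rho=\begin{pmatrix}1&\b\\0&\d\end{pmatrix}$ with $\d\neq0$. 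The condition $(\rho\times\rho)(C_{\id})=C_{\id}$ is automatic once $\rho$ is an honest automorphism, since $(\rho\times\rho)(C_\sigma)=C_{\rho\sigma\rho^{-1}}$ and here $\sigma=\id$. That finishes (1). For (2) the same reasoning gives $\rho=\begin{pmatrix}1&\b\\0&\d\end{pmatrix}$, but now the $(1,1)$-component is $C_{\tau_{1,1}}$, so the extra condition is $(\rho\times\rho)(C_{\tau_{1,1}})=C_{\tau_{1,1}}$, i.e. $\rho\,\tau_{1,1}\,\rho^{-1}=\tau_{1,1}$ in $\PGL_2(k)$. A short computation of $\begin{pmatrix}1&\b\\0&\d\end{pmatrix}\begin{pmatrix}1&1\\0&1\end{pmatrix}\begin{pmatrix}1&-\b/\d\\0&1/\d\end{pmatrix}=\begin{pmatrix}1&1/\d\\0&1\end{pmatrix}$ shows this holds iff $\d=1$, giving $\rho=\begin{pmatrix}1&\b\\0&1\end{pmatrix}$.

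\smallskip

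\textbf{Step 2 (case (3), the quadrangle).} Here $E$ has two $(1,0)$-components $\{P\}\times\PP^1,\{Q\}\times\PP^1$ and two $(0,1)$-components $\PP^1\times\{P\},\PP^1\times\{Q\}$, and no $(1,1)$-component. So $\rho\times\rho$ permutes $\{\{P\}\times\PP^1,\{Q\}\times\PP^1\}$, which means $\rho$ permutes $\{P,Q\}=\{(1,0),(0,1)\}$. Two subcases: either $\rho$ fixes both $(1,0)$ and $(0,1)$ — forcing $\b=\g=0$, hence (after normalizing) $\rho=\begin{pmatrix}1&0\\0&\d\end{pmatrix}$ — or $\rho$ swaps them — forcing $\a=\d=0$, hence $\rho=\begin{pmatrix}0&1\\\g&0\end{pmatrix}$. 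One should check that in both subcases $\rho\times\rho$ also preserves the pair of horizontal components, which is automatic since $\rho$ acts the same way in both coordinates. This disposes of (3).

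\smallskip

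\textbf{Expected main obstacle.} There is no deep obstacle; the only thing to be careful about is the bookkeeping of \emph{which} component maps to \emph{which}, making sure no permutation possibility is overlooked (e.g. in case (3) one could a priori worry about a vertical component mapping to a horizontal one, but $\rho\times\rho$ cannot do that since it preserves bidegree), and handling the normalization in $\PGL_2$ cleanly so that the stated matrix representatives are exactly right. The computation $\rho\tau_{1,1}\rho^{-1}$ in part (2) is the one genuine calculation, and it is a $2\times2$ matrix product; I would present it in a single displayed line rather than grinding through it. Everything else is a finite case check.
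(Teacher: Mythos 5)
Your proof is correct and follows essentially the same route as the paper: read off $\rho(P)=P$ (and, in case (3), that $\rho$ permutes $\{P,Q\}$) from how $\rho\times\rho$ must permute the components of each bidegree, then in case (2) impose $\rho\tau_{1,1}\rho^{-1}=\tau_{1,1}$ (the paper writes this as $\rho\tau_{1,1}=\tau_{1,1}\rho$ and compares entries) to force $\d=1$. The only cosmetic difference is that you make the bidegree-preservation argument explicit where the paper leaves it implicit.
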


%\begin{lemma}\label{lem.2equiv}
%	\begin{enumerate}[{\rm (1)}]
%		\item Let $E=\{P_1\} \times \PP^1 \cup \{P_2\} \times \PP^1 \cup \PP^1 \times \{P_1\} \cup \PP^1 \times \{P_2\}$
%		where $P_1=(1,0), P_2=(0,1) \in \PP^1$. Let $\mu \in \Aut_k \PP^1$ such that $(\mu \times \mu)(E)=E$.
%		%Then $\mu$ is written as one of the following forms:
%		Then $\mu=
%		\begin{pmatrix}
%			1 & 0 \\
%			0 & d
%		\end{pmatrix}$ or $\mu=
%		\begin{pmatrix}
%			0 & 1 \\
%			c & 0
%		\end{pmatrix}$
%		%\begin{center}
%		%	{\rm (i)}
%		%	$\begin{pmatrix}
%		%		1 & 0 \\
%		%		0 & d
%		%	\end{pmatrix}$, \quad\quad {\rm (ii)}
%		%	$\begin{pmatrix}
%		%		0 & 1 \\
%		%		c & 0
%		%	\end{pmatrix}$
%		%\end{center}
%		where $c,d \in k \setminus \{0\}$.
%		
%		\item Let $E=\{P\} \times \PP^1 \cup \PP^1 \times \{P\} \cup C_{\id}$ where $P=(1,0) \in \PP^1$.
%		Let $\mu \in \Aut_k \PP^1$ such that $(\mu \times \mu)(E)=E$. Then $\mu=
%		\begin{pmatrix}
%			1 & b \\
%			0 & d
%		\end{pmatrix}$ where $b \in k$ and $d \in k \setminus \{0\}$.
%		
%		\item Let $E=\{P\} \times \PP^1 \cup \PP^1 \times \{P\} \cup C_{\tau_2}$
%		where $P=(1,0) \in \PP^1$ and $\tau_2=
%		\begin{pmatrix}
%			1 & 1 \\
%			0 & 1
%		\end{pmatrix}$. Let $\mu \in \Aut_k \PP^1$ such that $(\mu \times \mu)(E)=E$.
%		Then $\mu=
%		\begin{pmatrix}
%			1 & b \\
%			0 & 1
%		\end{pmatrix}$
%		where $b \in k$.
%	\end{enumerate}
%\end{lemma}

%\textcolor{magenta}{（202050507-Matsuno note のp.23 Lemma 1.20 ??）}
%
%\textcolor{magenta}
%{
\begin{proof}
%	Let $\rho=
%	\begin{pmatrix}
%		\a & \b \\
%		\g & \d
%	\end{pmatrix} \in \Aut_k \PP^1$.
%	
%
%\noindent
		(1)\,\,Since $(\rho \times \rho)(E)=E$, $\rho(P)=P$ holds.
		Since $\rho(P)=(\a,\g)$, we have $\a \neq 0, \g=0$. 
		So $\rho=
		\begin{pmatrix}
			1 & \b \\
			0 & \d
		\end{pmatrix}$.	

\noindent			
		(2)\,\,Similarly to {\rm (1)}, we have $\rho=
		\begin{pmatrix}
			1 & \b \\
			0 & \d
		\end{pmatrix}$ for $\b \in k$ and $0 \neq \d \in k$. Since $(\rho \times \rho)(C_{\tau_{1,1}})=C_{\tau_{1,1}}$,
		it follows that $(\rho(p),\rho\tau_{1,1}(p)) \in C_{\tau_{1,1}}$ for any $p \in \PP^1$.
		Therefore, we have $\rho\tau_{1,1}=\tau_{1,1}\rho$. Since
		%\vspace{-5pt}
		\begin{align*}
			& \rho\tau_{1,1}=
			\begin{pmatrix}
				1 & \b \\
				0 & \d
			\end{pmatrix}
			\begin{pmatrix}
				1 & 1 \\
				0 & 1
			\end{pmatrix}=
			\begin{pmatrix}
				1 & 1+\b \\
				0 & \d
			\end{pmatrix}, 
			%\\
			%& 
			\tau_{1,1}\rho=
			\begin{pmatrix}
				1 & 1 \\
				0 & 1
			\end{pmatrix}
			\begin{pmatrix}
				1 & \b \\
				0 & \d
			\end{pmatrix}=
			\begin{pmatrix}
				1 & \b+\d \\
				0 & \d
			\end{pmatrix},
			%\vspace{-5pt}
		\end{align*}
		we have $\d=1$, so $\rho=
		\begin{pmatrix}
			1 & \b \\
			0 & 1
		\end{pmatrix}$. 

\noindent
		(3)\,\,Since $(\rho \times \rho)(E)=E$, it follows that
%		\begin{align*}
         $
			\begin{cases}
				\rho(P)=P, \\
				\rho(Q)=Q,
			\end{cases} \textnormal{or}\quad 
			\begin{cases}
				\rho(P)=Q, \\
				\rho(Q)=P.
			\end{cases}$
%		\end{align*}
		By calculating, we have $\rho(P)=(\a,\g)$ and $\rho(Q)=(\b,\d)$.
		If $\rho(P)=P$ and $\rho(Q)=Q$, then $\a,\d \neq 0, \b=\g=0$, so
		$\rho=
		\begin{pmatrix}
			1 & 0 \\
			0 & \d
		\end{pmatrix}$. On the other hand, if $\rho(P)=Q$ and $\rho(Q)=P$, then $\a=\d=0, \b,\g \neq 0$, so
		$\rho=
		\begin{pmatrix}
			0 & 1 \\
			\g & 0
		\end{pmatrix}$.
\end{proof}
%}
%Theorem \ref{thm.isom} gives the list of defining relations of $3$-dimensional cubic AS-regular algebras in each type up to isomorphisms. 
%%%%%%%%%%%%%%
\begin{theorem}\label{thm.isom}
	Let $A=\cA(E,\sigma)$ be a $3$-dimensional cubic AS-regular algebra
	of Type S\,$'$, T\,$'$ or FL. 
	%TWL or WL.
	%where a geometric pair $(E,\sigma)$ is one of those
	%in Proposition \ref{prop.geom} and Proposition \ref{prop.auto}.
	For each type, {\rm Table $3$} describes
	\begin{enumerate}
	\item[\rm (I):] the defining relations of $A$, and
	%\vspace{-5pt}
	\item[\rm (II):] the conditions to be isomorphic as graded algebras in terms of their defining relations.
	%\vspace{-10pt}
	\end{enumerate}
	
	\noindent
	In {\rm Table $3$}, if $X \neq Y$ or $i \neq j$, then Type $X_i$ algebra is not isomorphic to any Type $Y_j$ algebra.
	Moreover, every algebra in {\rm Table $3$} is a $3$-dimensional cubic AS-regular algebra.
%\vspace{-25pt}
	\begin{center}
		\noindent{\renewcommand\arraystretch{1.5}
			{\small
				\begin{longtable}{|p{0.8cm}|p{6cm}|p{5.0cm}|}
					\multicolumn{3}{c}{{\rm Table 3: 
\begin{minipage}[t]{20em}List of defining relations and conditions to be graded algebra isomorphic\end{minipage}}}%{{\rm Table: ISOM}}
					\\[15pt] \hline
					%%%%%%%%%%%%%%%%%%%%%%%%%%%%%%%%%%%%%%%%%%%%%%%%%%%%%%%%%%%%%%%%%%%%%%%%%%%%%%%
					{\rm Type}
					& {\rm (I) Defining relations \quad $(\a, \b \in k)$}
					& {\rm (II) Conditions to be graded algebra isomorphic}
					\\ \hline\hline
					%%%%%%%%%%%%%%%%%%%%%%%%%%%%%%%%%%%%%%%%%%%%%%%%%%%%%%%%%%%%%%%%%%%%%%%%%%%%%%%
					{\rm S$'$} &
					$\begin{cases}
						xy^2-y^2x, \\
						x^2y+yx^2-2y^3
					\end{cases}$ &
					\ \hfill \textnormal{---------------------} \hfill \rule{0pt}{10pt}
					\\ \hline
					%%%%%%%%%%%%%%%%%%%%%%%%%%%%%%%%%%%%%%%%%%%%%%%%%%%%%%%%%%%%%%%%%%%%%%%%%%%%%%%%%%%
					{\rm T$'_1$} &
					$\begin{cases}
						xy^2-y^2x, \\
						x^2y-yx^2+yxy-xy^2
					\end{cases}$ &
					\ \hfill \textnormal{---------------------} \hfill \rule{0pt}{10pt}
					\\ \hline
					%%%%%%%%%%%%%%%%%%%%%%%%%%%%%%%%%%%%%%%%%%%%%%%%%%%%%%%%%%%%%%%%%%%%%%%%%%%%%%%%%%%%
					{\rm T$'_2$} &
					$\begin{cases}
						xy^2-y^2x+2y^3, \\
						x^2y-yx^2 \\
						-\a xy^2+\a yxy+2y^2x  -(\a+2)y^3
					\end{cases}$ &
					$\a'=\a$
					\\ \hline
					%%%%%%%%%%%%%%%%%%%%%%%%%%%%%%%%%%%%%%%%%%%%%%%%%%%%%%%%%%%%%%%%%%%%%%%%%%%%%%%%%%%%
					{\rm FL$_1$} &
					$\begin{cases}
						xy^2+\a y^2x, \\
						x^2y-\a yx^2
					\end{cases}$ &
					$\a'=\a, -\a^{-1}$
					\\ \hline
					%%%%%%%%%%%%%%%%%%%%%%%%%%%%%%%%%%%%%%%%%%%%%%%%%%%%%%%%%%%%%%%%%%%%%%%%%%%%%%%%%%%%%
					{\rm FL$_2$} &
					$\begin{cases}
						-\a x^3+yxy, \\
						\b xyx-y^3
					\end{cases}$ &
					$(\a',\b')=(\a,\b)$ \textnormal{in} $\PP^1$
					\\ \hline
					%%%%%%%%%%%%%%%%%%%%%%%%%%%%%%%%%%%%%%%%%%%%%%%%%%%%%%%%%%%%%%%%%%%%%%%%%%%%%%%%%%%%%
%					{\rm TWL} &
%					$\begin{cases}
%						xy^2+y^2x, \\
%						x^2y+yx^2+y^3
%					\end{cases}$ &
%					\ \hfill \textnormal{---------------------} \hfill \rule{0pt}{10pt}
%					\\ \hline
%					%%%%%%%%%%%%%%%%%%%%%%%%%%%%%%%%%%%%%%%%%%%%%%%%%%%%%%%%%%%%%%%%%%%%%%%%%%%%%%%%%%%%%
%					{\rm WL$_1$} &
%					$\begin{cases}
%						\a^2xy^2+y^2x-2\a yxy, \\
%						\a^2x^2y+yx^2-2\a xyx
%					\end{cases}$ &
%					$\a'=\a^{\pm 1}$
%					\\ \hline
%					%%%%%%%%%%%%%%%%%%%%%%%%%%%%%%%%%%%%%%%%%%%%%%%%%%%%%%%%%%%%%%%%%%%%%%%%%%%%%%%%%%%%%%
%					{\rm WL$_2$} &
%					$\begin{cases}
%						xy^2+y^2x-2yxy, \\
%						x^2y+yx^2-2xyx+4xy^2-4yxy+2y^3
%					\end{cases}$ &
%					\ \hfill \textnormal{---------------------} \hfill \rule{0pt}{10pt}
%					\\ \hline
%					%%%%%%%%%%%%%%%%%%%%%%%%%%%%%%%%%%%%%%%%%%%%%%%%%%%%%%%%%%%%%%%%%%%%%%%%%%%%%%%%%%%%%%%
				\end{longtable}
			}
		}
	\end{center}
\end{theorem}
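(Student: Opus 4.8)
The plan is to carry out \textbf{Step 5} type by type, translating graded-algebra isomorphisms into the combinatorics of point schemes via Lemma~\ref{thm.GA}~(1). By Theorem~\ref{thm.DR} and Proposition~\ref{prop.P}, after discarding the choices of $\sigma$ in Lemma~\ref{lem.step1} for which $\cA(E,\sigma)$ is not a domain (hence not AS-regular) and the degenerate parameter values that force $\cA(E,\sigma)$ to be of Type P, every $3$-dimensional cubic AS-regular algebra of Type $\mathrm{S}'$, $\mathrm{T}'$ or $\mathrm{FL}$ is $\cA(E,\sigma)$ with $E$ one of the normal forms of Lemma~\ref{lem.step0} (the bidegree-$(1,1)$ component being $C_{\mathrm{id}}$ in Type $\mathrm{T}'_1$, $C_{\tau_{1,1}}$ in Type $\mathrm{T}'_2$, and the quadrangle carrying no free parameter), $\sigma$ one of the automorphisms of Lemma~\ref{lem.step1}, and the relations of column~(I) obtained, after normalisation, as the derivation-quotient relations $\partial_x\om,\partial_y\om$ of the potentials of Table~2. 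That these potentials are regular twisted superpotentials --- hence that every algebra in Table~3 is indeed a $3$-dimensional cubic AS-regular algebra --- is precisely Proposition~\ref{prop.P}.

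For the isomorphism classification within a fixed type I would argue as follows. If $\cA(E,\sigma)\cong\cA(E',\sigma')$ as graded algebras with $E,E'$ in the same normal family, then by Lemma~\ref{thm.GA}~(1) there is $\tau\in\Aut_k\PP^1$ with $(\tau\times\tau)(E)=E'$ and $\sigma'\circ(\tau\times\tau)=(\tau\times\tau)\circ\sigma$. Since $\tau\times\tau$ preserves bidegrees, $(\tau\times\tau)(E)=E'$ already forces $E'=E$, so $\tau$ must lie in the stabilizer subgroup $\{\tau:(\tau\times\tau)(E)=E\}$ determined in Lemma~\ref{lem.step4-1}. Feeding such a $\tau$ into the commuting square and comparing the two composites on each irreducible component of $E$ produces an explicit system relating the parameters of $\sigma'$ to those of $\sigma$; solving it, and conversely checking that each allowed identification is realised, yields exactly column~(II). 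For example, in Type $\mathrm{T}'_1$ the $2$-dimensional (upper-triangular) stabilizer rescales the single surviving parameter freely and collapses the family to one algebra; in Type $\mathrm{FL}_1$ the anti-diagonal element of the stabilizer of the quadrangle sends the parameter $\a$ to $-\a^{-1}$ while the diagonal part fixes it; in Type $\mathrm{FL}_2$ the diagonal and anti-diagonal stabilizers rescale $(\a,\b)$ by a common square, resp.\ invert-and-swap it, so both fix the class $(\a:\b)\in\PP^1$.

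For the non-isomorphisms across types I would use that a graded isomorphism forces the point schemes to be $2$-equivalent (Lemma~\ref{thm.GA}~(1) and the discussion following Definition~\ref{def.equiv}). Types $\mathrm{S}'$, $\mathrm{T}'$ and $\mathrm{FL}$ carry pairwise non-isomorphic --- hence non-$2$-equivalent --- reduced curves (a triangle; a conic and two lines through one point; a quadrangle), which settles all cross-type pairs except $\mathrm{T}'_1$ versus $\mathrm{T}'_2$ and $\mathrm{FL}_1$ versus $\mathrm{FL}_2$. For $\mathrm{T}'_1$ versus $\mathrm{T}'_2$: any $\tau\times\tau$ maps $C_\rho$ to $C_{\tau\rho\tau^{-1}}$, so it fixes $C_{\mathrm{id}}$ and can never carry it onto $C_{\tau_{1,1}}$, whence the point schemes are not $2$-equivalent. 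For $\mathrm{FL}_1$ versus $\mathrm{FL}_2$ the point schemes coincide, so the $2$-equivalence test is vacuous; here I would instead note that an isomorphism would force $\sigma_{\mathrm{FL}_2}=(\tau\times\tau)\,\sigma_{\mathrm{FL}_1}\,(\tau\times\tau)^{-1}$ for a $\tau$ stabilizing the quadrangle, restrict both automorphisms to the four singular points (a set preserved by $\sigma$ and by $\tau\times\tau$), and observe that $\sigma_{\mathrm{FL}_1}$ acts there with cycle type $(1)(1)(2)$ while $\sigma_{\mathrm{FL}_2}$ acts as a $4$-cycle; these are incompatible under conjugation.

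The main labour is the bookkeeping of the second paragraph --- tracking how the commuting square acts on the several parameters $\a,\b,\g$ carried by the Type $\mathrm{T}'$ and Type $\mathrm{FL}$ automorphisms, and verifying that the stabilizer action collapses (or does not collapse) each family precisely as Table~3 claims. The one conceptually separate point is $\mathrm{FL}_1$ versus $\mathrm{FL}_2$, where the point scheme does not distinguish the algebras and the auxiliary invariant --- the permutation type of $\sigma$ on the singular locus of $E$ --- must be introduced.
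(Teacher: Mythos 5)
Your proposal follows the paper's own proof essentially verbatim: reduce to the normal forms produced by Steps 1--4 (Theorem~\ref{thm.DR} and Proposition~\ref{prop.P}), then combine Lemma~\ref{thm.GA}~(1) with the stabilizer computations of Lemma~\ref{lem.step4-1} to read off the conditions in column (II), and your parameter actions (collapse of $\mathrm{T}'_1$ to a single algebra, $\a'=\a$ for $\mathrm{T}'_2$, $\a'=\a$ or $-\a^{-1}$ for $\mathrm{FL}_1$, $(\a':\b')=(\a:\b)$ for $\mathrm{FL}_2$) all agree with the paper. The one place you go beyond the written proof is the cross-type non-isomorphism claim --- in particular distinguishing $\mathrm{FL}_1$ from $\mathrm{FL}_2$, whose point schemes coincide, via the cycle type of $\sigma$ on the four singular points of the quadrangle (two fixed points and a transposition versus a $4$-cycle, which is invariant under conjugation by the stabilizer) --- an argument the paper leaves implicit but which is correct and completes the statement as written.
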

%%%%%%%%%%%%%
\begin{proof}
	Let $P=(1,0), Q=(0,1) \in \PP^1$. 
We prove only for Type FL$_{1}$ and FL$_{2}$. 
For Type S$'$, T$_{1}'$ and T$_{2}'$, the proof is a similar way.

		\smallskip
		
		\noindent
		(3-1) (Type {\rm FL$_1$})\,\,
		Let $A$ be a $3$-dimensional cubic AS-regular algebra of Type {\rm FL$_1$}.
		By Theorem \ref{thm.DR} and Proposition \ref{prop.P}, we can write 
		\vspace{-10pt}
		\begin{align*}
			&A_{\a}:=A=\cA(E,\sigma_{\a})=\kxy/(xy^2+\a y^2x,x^2y-\a yx^2)
			\quad (\a \neq 0),\\
			&\text{where }E=\PP^1 \times \{P\} \cup \PP^1 \times \{Q\} \cup \{P\} \times \PP^1 \cup \{Q\} \times \PP^1, 
			\\ & 
			\begin{cases}
				\sigma_{\a}(P,p)=(p,P), \,\,
				\sigma_{\a}(Q,p)=(p,Q), \\
				\sigma_{\a}(p,P)=(P,\tau_{\a}(p)), \,\,
				\sigma_{\a}(p,Q)=(Q,\tau_{-\a}(p)).
			\end{cases}
			\vspace{-10pt}
		\end{align*}
		%where $\a \neq 0$.
		Assume that $A_{\a} \cong A_{\a'}$. 
		By Lemma \ref{thm.GA} (1), 
		%\cite[Theorem 3.5]{MaS}, 
		there exists $\rho \in \Aut_k \PP^1$ such that
		$\rho \times \rho$ restricts to an automorphism of $E$ and
		%\[
		{\small 
		$
		\xymatrix{
			E \ar[r]^{\rho \times \rho} \ar[d]_{\sigma_{\a}} & E \ar[d]^{\sigma_{\a'}} \\
			E \ar[r]_{\rho \times \rho} & E
		}
		$
		}
		%\]
		commutes. 
		%Then $\sigma_{\a'} \circ (\mu \times \mu)=(\mu \times \mu) \circ \sigma_{\a}$ if and only if $\tau_{\a'}\mu=\mu\tau_{\a}$.
		By Lemma \ref{lem.step4-1} (3), it holds that
		$\rho=\begin{pmatrix}
			1 & 0 \\
			0 & d
		\end{pmatrix}$ or $\rho=\begin{pmatrix}
			0 & 1 \\
			c & 0
		\end{pmatrix}$.
		If $\rho=\begin{pmatrix}
			1 & 0 \\
			0 & d
		\end{pmatrix}$, then 
			$\sigma_{\a'} \circ (\rho \times \rho)=(\rho \times \rho) \circ \sigma_{\a} \Longleftrightarrow \tau_{\a'}\rho=\rho\tau_{\a}$.
		Since $\rho\tau_{\a}\rho^{-1}=\tau_{\a}$, 
			$\tau_{\a'}\rho=\rho\tau_{\a} \Longleftrightarrow \a'=\a$.
		If $\rho=\begin{pmatrix}
			0 & 1 \\
			c & 0
		\end{pmatrix}$, then 
			$\sigma_{\a'} \circ (\rho \times \rho)=(\rho \times \rho) \circ \sigma_{\a} \Longleftrightarrow \tau_{\a'}\rho=\rho\tau\tau_{\a}$.
		Since $\rho\tau\tau_{\a}\rho^{-1}=
		\begin{pmatrix}
			1 & 0 \\
			0 & -\a^{-1}
		\end{pmatrix}$, 
			$\tau_{\a'}\rho=\rho\tau\tau_{\a} \Longleftrightarrow \a'=-\a^{-1}$.
		
		Conversely, if $\a'=\a$, then it is clear that $A_{\a'} \cong A_{\a}$
		as graded algebras.
		If $\a'=-\a^{-1}$, then we set $\rho:=
		\begin{pmatrix}
			0 & 1 \\
			1 & 0
		\end{pmatrix}$. By the direct calculation, we have $(\rho \times \rho)(E)=E$ and $\sigma_{\a'} \circ (\rho \times \rho)=(\rho \times \rho) \circ \sigma_{\a}$.
		By Lemma \ref{thm.GA} (1), 
		%\cite[Theorem 3.6]{MaS}, 
		$A_{\a'} \cong A_{\a}$ as graded algebras.
		%%%%%%%%%%%%%%%%%%%%%%%%%%%%%%%%%%%%%%%%%%%%%%%%%%%%%%%%%%%%%%%%%%%%%%%%%%%%%%%%%%%%%%%%%%%%%%%%%%%%%%%%%%%%%%%%%%%%%%%%%%%%%%%%%%%%%%%%%%%%%
		
		\smallskip
		
		\noindent
		(3-2) (Type {\rm FL$_2$})\,\,
		Let $A$ be a $3$-dimensional cubic AS-regular algebra of Type {\rm FL$_2$}.
		By Theorem \ref{thm.DR} and Proposition \ref{prop.P}, we can write
		\begin{align*}
			&A_{\a,\b}:=A=\cA(E,\sigma_{\a,\b})=\kxy/(yxy-\a x^3,\b xyx-y^3)
			\quad (\a \neq \b), \\
			&\text{where }E=\PP^1 \times \{P\} \cup \PP^1 \times \{Q\} \cup \{P\} \times \PP^1 \cup \{Q\} \times \PP^1, 
			\\ & 
			\begin{cases}
				\sigma_{\a,\b}(P,p)=(p,Q), \,\,
				\sigma_{\a,\b}(Q,p)=(p,P), \\
				\sigma_{\a,\b}(p,P)=(P,\mu_{\a}(p)), \,\,
				\sigma_{\a,\b}(p,Q)=(Q,\mu_{\b}(p)). 
			\end{cases}
		\end{align*}
		%where $\a \neq \b$.
		Assume that $A_{\a',\b'} \cong A_{\a,\b}$ as graded algebras. 
		By Lemma \ref{thm.GA} (1), 
		%\cite[Theorem 3.5]{MaS}, 
		there exists $\rho \in \Aut_k \PP^1$ such that
		$\rho \times \rho$ restricts to an automorphism of $E$ and
		%\[
		{\small
		$
		\xymatrix{
			E \ar[r]^{\rho \times \rho} \ar[d]_{\sigma_{\a,\b}} & E \ar[d]^{\sigma_{\a',\b'}} \\
			E \ar[r]_{\rho \times \rho} & E
		}
		$
		}
		%\]
		commutes.
		By Lemma \ref{lem.step4-1} (3), it holds that
		$\rho=\begin{pmatrix}
			1 & 0 \\
			0 & d
		\end{pmatrix}$ or $\rho=\begin{pmatrix}
			0 & 1 \\
			c & 0
		\end{pmatrix}$. If $\rho=\begin{pmatrix}
			1 & 0 \\
			0 & d
		\end{pmatrix}$, then
			$\sigma_{\a',\b'} \circ (\rho \times \rho)=(\rho \times \rho) \circ \sigma_{\a,\b} \Longleftrightarrow \tau_{\a'}\rho=\rho\tau_{\a}, \tau_{\b'}\rho=\rho\tau_{\b}$.
		Since $\rho\tau_{\a}\rho^{-1}=
		\begin{pmatrix}
			0 & 1 \\
			d^2\a & 0 
		\end{pmatrix}$ and $\rho\tau_{\b}\rho^{-1}=
		\begin{pmatrix}
			0 & 1 \\
			d^2\b & 0 
		\end{pmatrix}$,
		\vspace{-5pt}
			$$\tau_{\a'}\rho=\rho\tau_{\a}, \tau_{\b'}\rho=\rho\tau_{\b} \Longleftrightarrow (\a',\b')=(\a,\b) \text{ in } \PP^1.\vspace{-5pt}$$
		If $\rho=\begin{pmatrix}
			0 & 1 \\
			c & 0
		\end{pmatrix}$, then 
			$\sigma_{\a',\b'} \circ (\rho \times \rho)=(\rho \times \rho) \circ \sigma_{\a,\b} \Longleftrightarrow \tau_{\b'}\rho=\rho\tau_{\a}, \tau_{\a'}\rho=\rho\tau_{\b}$.
		Since $\rho\tau_{\a}\rho^{-1}=
		\begin{pmatrix}
			0 & 1 \\
			\frac{c^2}{\a} & 0 
		\end{pmatrix}$,
			$\tau_{\b'}\rho=\rho\tau_{\a} \Longleftrightarrow \b'\a=c^2$.
		Similarly, it follows that
			$\tau_{\a'}\rho=\rho\tau_{\b} \Longleftrightarrow \a'\b=c^2$.
		Therefore, we have 
			$\sigma_{\a',\b'} \circ (\rho \times \rho)=(\rho \times \rho) \circ \sigma_{\a,\b} \Longleftrightarrow (\a',\b')=(\a,\b) \text{ in } \PP^1$.  
		
		Conversely, suppose that $(\a',\b')=(\a,\b)$ in $\PP^1$. 
Then there exists 
%a non-zero element 
$\lambda \in k\setminus \{0\}$ such that
		$\a'=\lambda \a, \b'=\lambda \b$. We set $\rho:=
		\begin{pmatrix}
			1 & 0 \\
			0 & \sqrt{\lambda}
		\end{pmatrix}$.
		By the direct calculation, we have 
		$(\mu \times \mu)(E)=E \text{ and } \sigma_{\a',\b'} \circ (\rho \times \rho)=(\rho \times \rho) \circ \sigma_{\a,\b}$. 
		By Lemma \ref{thm.GA} (1), 
		%\cite[Theorem 3.6]{MaS}, 
		$A_{\a',\b'} \cong A_{\a,\b}$ as graded algebras.
%	\end{enumerate}
\end{proof}
%%%%%%%%%%%%%%%%%%%%%%%%%%%%%%%%%%%%%%%%%%%%%%%%%%%%%%%%%%%%%%%%%%%%%%%%%%%%%%%%%%%%%%%%%%%%%%%%%
\subsection{{\bf Step 6:} Classify them up to graded Morita equivalence in terms of their defining relations. }
\label{subsec.ME}
%Finally, in this subsection, we will give the complete list of defining relations of $3$-dimensional cubic AS-regular algebras
%whose point schemes are not integral,
%and classify them up to graded Morita equivalence.
%
%\medskip
%
%\noindent
%\fbox{{\bf Step 6:} Classify them up to graded Morita equivalence in terms of their defining relations.}
%
%\medskip 
%%%%%%%%%%%%%%%%%%%%%%%%%%%%%%%%%%
%
%Theorem \ref{thm.grmod} gives the list of defining relations of $3$-dimensional cubic AS-regular algebras in each type up to graded Morita equivalence.

\begin{theorem}\label{thm.grmod}
	Let $A=\cA(E,\sigma)$ be a $3$-dimensional cubic AS-regular algebra
	of Type S\,$'$, T\,$'$ or FL. 
%	TWL or WL.
	For each type, {\rm Table $4$} describes 
	\begin{enumerate}
	\item[\rm (I):] the defining relations of $A$, and
	%\vspace{-5pt}
	\item[\rm (III):] the conditions to be graded Morita equivalent in terms of their defining relations.	
	%\vspace{-5pt}
	\end{enumerate}
	In {\rm Table $4$}, if $X \neq Y$, then Type $X$ algebra is not graded Morita equivalent to
	any Type $Y$ algebra.
	Moreover, every algebra in {\rm Table $4$} is a $3$-dimensional cubic AS-regular algebra.
%%%%%%%
%\vspace{-25pt}
	\begin{center}
		\noindent{\renewcommand\arraystretch{1.5}
			{\small
				\begin{longtable}{|p{0.8cm}|p{5.3cm}|p{5.0cm}|}
					\multicolumn{3}{c}{{\rm Table 4: 
\begin{minipage}[t]{20em}List of defining relations and conditions to be graded  Morita equivalent\end{minipage}}}%{{\rm Table: GME}}
					\\[15pt] \hline
					%%%%%%%%%%%%%%%%%%%%%%%%%%%%%%%%%%%%%%%%%%%%%%%%%%%%%%%%%%%%%%%%%%%%%%%%%%%%%%%
					{\rm Type} & {\rm (I) Defining relations \quad$(\a, \b \in k)$}
					& {\rm (III) Conditions to be graded Morita equivalent} 
					\\ \hline\hline
					%%%%%%%%%%%%%%%%%%%%%%%%%%%%%%%%%%%%%%%%%%%%%%%%%%%%%%P
					{\rm S$'$} &
					$\begin{cases}
						xy^2-y^2x, \\
						x^2y+yx^2-2y^3
					\end{cases}$ &
					\ \hfill \textnormal{---------------------} \hfill \rule{0pt}{10pt}
					\\ \hline
					%%%%%%%%%%%%%%%%%%%%%%%%%%%%%%%%%%%%%%%%%%%%%%%%%%%%%%%%%%%%%%%%%%%%%%%%%%%%%%%%%%%%%%%%
					{\rm T$'$} &
					$\begin{cases}
						xy^2-y^2x, \\
						x^2y-yx^2+yxy-xy^2
					\end{cases}$ &
					\ \hfill \textnormal{---------------------} \hfill \rule{0pt}{10pt}
					\\ \hline
					%%%%%%%%%%%%%%%%%%%%%%%%%%%%%%%%%%%%%%%%%%%%%%%%%%%%%%%%%%%%%%%%%%%%%%%%%%%%%%%%%%%%
					{\rm FL} &
					$\begin{cases}
						-\a x^3+yxy, \\
						\b xyx-y^3
					\end{cases}$ &
					$(\a',\b')=(\a,\b), (\b,\a)$ \textnormal{in} $\PP^1$
					\\ \hline
%					%%%%%%%%%%%%%%%%%%%%%%%%%%%%%%%%%%%%%%%%%%%%%%%%%%%%%%%%%%%%%%%%%%%%%%%%%%%%%%%%%%%%%%
%					{\rm TWL} &
%					$\begin{cases}
%						xy^2+y^2x, \\
%						x^2y+yx^2+y^3
%					\end{cases}$ &
%					\ \hfill \textnormal{---------------------} \hfill \rule{0pt}{10pt}
%					\\ \hline
%					%%%%%%%%%%%%%%%%%%%%%%%%%%%%%%%%%%%%%%%%%%%%%%%%%%%%%%%%%%%%%%%%%%%%%%%%%%%%%%%%%%%%%
%					{\rm WL} &
%					$\begin{cases}
%						xy^2+y^2x-2yxy, \\
%						x^2y+yx^2-2xyx
%					\end{cases}$ &
%					\ \hfill \textnormal{---------------------} \hfill \rule{0pt}{10pt}
%					\\ \hline
%					%%%%%%%%%%%%%%%%%%%%%%%%%%%%%%%%%%%%%%%%%%%%%%%%%%%%%%%%%%%%%%%%%%%%%%%%%%%%%%%%%%%%%%
				\end{longtable}
			}
		}
	\end{center}
\end{theorem}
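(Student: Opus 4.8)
The plan is to apply the graded Morita criterion of \lemref{thm.GA}~(2): $\cA(E,\sigma)$ and $\cA(E',\sigma')$ are graded Morita equivalent if and only if there is a sequence $\{\tau_n\}_{n\in\ZZ}$ in $\Aut_k\PP^1$ with $(\tau_n\times\tau_{n+1})(E)=E'$ and $\sigma'\circ(\tau_n\times\tau_{n+1})=(\tau_{n+1}\times\tau_{n+2})\circ\sigma$ on $E$ for every $n$. By \thmref{thm.DR}, \thmref{thm.isom} and \propref{prop.P} it suffices to work with the representatives listed there. First I would dispose of the cross-type statement: if a Type $X$ algebra and a Type $Y$ algebra with $X\neq Y$ were graded Morita equivalent, then in particular $\tau_0\times\tau_1$ would be an automorphism of $\PP^1\times\PP^1$ carrying the point scheme of the first onto that of the second; but such an automorphism preserves the bidegree of each irreducible component as well as the incidence pattern, and the point schemes of Types S$'$, T$'$, FL are a triangle, three components through a single point, and a quadrangle, which differ in these data pairwise. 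Type S$'$ consists of a single algebra, so there is nothing more to do there.

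For Type T$'$ I would show that T$'_1$ together with the whole T$'_2$ family form one graded Morita class. Both have point scheme of the shape $\PP^1\times\{P\}\cup\{P\}\times\PP^1\cup C_{\psi}$ with $P=(1,0)$ and $\psi$ unipotent ($\psi=\id$ for T$'_1$, $\psi=\tau_{1,1}$ for T$'_2$), with $\sigma$ as described in \lemref{lem.step1}~(2),(3). Any $\tau_n$ that can occur in a twisting sequence between two such point schemes must fix $P$ in order to preserve the two lines, hence is upper triangular, while the condition $(\tau_n\times\tau_{n+1})(C_{\psi})=C_{\psi'}$ reads $\tau_{n+1}\psi=\psi'\tau_n$. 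Combining this with the intertwining relation for $\sigma$ restricted to each of the three components reduces the problem to an elementary one-step recursion on the entries of the $\tau_n$ (forcing the lower-right entry to be independent of $n$), whose solvability shows that the T$'_2$ parameter $\a$ can be moved to any other value $\neq 2$ and also to the T$'_1$ algebra. Hence all Type T$'$ algebras are graded Morita equivalent, which is the content of the T$'$ row of Table~$4$.

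For Type FL the same idea applies, but the four-line structure makes the analysis bifurcate. Both FL$_1$ and FL$_2$ have the quadrangle $E=\PP^1\times\{P\}\cup\PP^1\times\{Q\}\cup\{P\}\times\PP^1\cup\{Q\}\times\PP^1$ as point scheme, and, as in the proof of \lemref{lem.step4-1}~(3), every $\tau_n$ occurring in a twisting sequence with $(\tau_n\times\tau_{n+1})(E)=E$ is either diagonal (fixing $P$ and $Q$) or anti-diagonal (interchanging them). Passing to the induced permutation of the four lines --- under which the point-scheme automorphism of FL$_1$ acts as a double transposition and that of FL$_2$ as a $4$-cycle --- the commuting-ladder condition forces the diagonal/anti-diagonal pattern of $\{\tau_n\}$ to be one of: constant (all diagonal or all anti-diagonal), which yields only the identity relation on parameters; $2$-periodic alternation of a diagonal and an anti-diagonal automorphism, which yields $(\a',\b')=(\b,\a)$ in $\PP^1$ for the FL$_2$ presentation $\kxy/(yxy-\a x^3,\b xyx-y^3)$; or the $4$-periodic pattern needed to pass between the double transposition and the $4$-cycle, which exhibits each Type FL$_1$ algebra as graded Morita equivalent to an explicit Type FL$_2$ algebra. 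Solving the resulting recursions on the scalar entries of the $\tau_n$ shows that these patterns are realizable and that no further parameter relation occurs; this gives the FL row of Table~$4$. Assembling the three cases, and recalling from \propref{prop.P} that every algebra in Table~$4$ is a $3$-dimensional cubic AS-regular algebra, completes the proof.

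The main obstacle will be the FL case, and specifically the explicit construction of the bi-infinite twisting sequences $\{\tau_n\}$ --- the $2$-periodic one producing the swap $(\a,\b)\mapsto(\b,\a)$ and the $4$-periodic one relating FL$_1$ to FL$_2$ --- together with the verification of the infinitely many commuting squares while tracking the scalar parameters, so as to extract exactly the condition ``$(\a',\b')=(\a,\b)$ or $(\b,\a)$ in $\PP^1$'' and nothing coarser or finer. The Type T$'$ computation is of the same nature but easier, since no interchange of components intervenes and the $\tau_n$ remain upper triangular.
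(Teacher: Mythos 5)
Your proposal is correct and follows essentially the same route as the paper: both reduce to the ladder criterion of Lemma \ref{thm.GA}~(2), constrain each $\tau_n$ by its action on the irreducible components of $E$ (upper triangular for Type T$'$; diagonal or anti-diagonal, classified via Lemma \ref{lem.step4-1}~(3), for Type FL), and then solve the resulting recursions — the paper simply writes out the explicit sequences (e.g.\ the $8$-periodic one linking FL$_1$ to $B_{1,-1}$ and the case analysis on the diagonal/anti-diagonal patterns yielding $(\b',\g')=(\b,\g)$ or $(\g,\b)$ in $\PP^1$) that you correctly identify as the main remaining work.
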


\begin{proof}

We prove only for Type FL. 
For Type S$'$ and T$'$, the proof is a similar way. 

	   \smallskip
		
		\noindent
		(3-1) (Type {\rm FL$_1$})\,\,		
		Let $A$ be a $3$-dimensional cubic AS-regular algebra of Type {\rm FL$_1$}.
		By Theorem \ref{thm.DR} and Proposition \ref{prop.P}, we can write
		\begin{align*}
			&A_{\a}:=A=\cA(E,\sigma_{\a})=\kxy/(xy^2+\a y^2x,x^2y-\a yx^2) \quad (\a \neq 0),\\
			&\text{where }E=\PP^1 \times \{P\} \cup \PP^1 \times \{Q\} \cup \{P\} \times \PP^1 \cup \{Q\} \times \PP^1, 
			\\ & 
			\begin{cases}
				\sigma_{\a}(P,p)=(p,P), \,\,
				\sigma_{\a}(Q,p)=(p,Q), \\
				\sigma_{\a}(p,P)=(P,\tau_{\a}(p)), \,\,
				\sigma_{\a}(p,Q)=(Q,\tau_{-\a}(p)).
			\end{cases}
		\end{align*}
		%where $\a \neq 0$.
		We will show that every $A_{\a}$ is graded Morita equivalent to $A_{1}$.
		For every $n \in \ZZ$, we set
		%\begin{center}
			$\rho_{2n}:=
			\begin{pmatrix}
				1 & 0 \\
				0 & \a^{-n}
			\end{pmatrix}$ and
			$\rho_{2n+1}:=
			\begin{pmatrix}
				1 & 0 \\
				0 & \a^{-n}
			\end{pmatrix}$.
		%\end{center}
		It is clear that $\rho_{i} \in \Aut_k \PP^1$ and $\rho_{i} \times \rho_{i+1}$ restricts to an automorphism of $E$ for every $i \in \ZZ$.
		For every $i \in \ZZ$,
		\vspace{-10pt}
		\begin{align*}
			&\begin{cases}
				(\sigma_{1} \circ (\rho_{i} \times \rho_{i+1}))(P,p)=(\rho_{i+1}(p),P), \\
				(\sigma_{1} \circ (\rho_{i} \times \rho_{i+1}))(Q,p)=(\rho_{i+1}(p),Q), \\
				(\sigma_{1} \circ (\rho_{i} \times \rho_{i+1}))(p,P)=(P,\rho_{i}(p)), \\
				(\sigma_{1} \circ (\rho_{i} \times \rho_{i+1}))(p,Q)=(Q,\tau_{-1}\rho_{i}(p)),
			\end{cases}
			\\ & \begin{cases}
				((\rho_{i+1} \times \rho_{i+2}) \circ \sigma_{\a})(P,p)=(\rho_{i+1}(p),P), \\
				((\rho_{i+1} \times \rho_{i+2}) \circ \sigma_{\a})(Q,p)=(\rho_{i+1}(p),Q), \\
				((\rho_{i+1} \times \rho_{i+2}) \circ \sigma_{\a})(p,P)=(P, \rho_{i+2}\tau_{\a}(p)), \\
				((\rho_{i+1} \times \rho_{i+2}) \circ \sigma_{\a})(p,Q)=(Q,\rho_{i+2}\tau_{-\a}(p)).
				\vspace{-10pt}
			\end{cases}
		\end{align*}
		If $i=2n$ ($n \in \ZZ$), then
			$\rho_{i+2}\tau_{\a}=
			\begin{pmatrix}
				1 & 0 \\
				0 & \a^{-(n+1)}
			\end{pmatrix}
			\begin{pmatrix}
				1 & 0 \\
				0 & \a
			\end{pmatrix}=
			\begin{pmatrix}
				1 & 0 \\
				0 & \a^{-n}
			\end{pmatrix}=\rho_{i}$.
		If $i=2n+1$ ($n \in \ZZ$), then
			$\rho_{i+2}\tau_{\a}=
			\begin{pmatrix}
				1 & 0 \\
				0 & \a^{-(n+1)}
			\end{pmatrix}
			\begin{pmatrix}
				1 & 0 \\
				0 & \a
			\end{pmatrix}=
			\begin{pmatrix}
				1 & 0 \\
				0 & \a^{-n}
			\end{pmatrix}=\rho_{i}$.
			
		Similarly, we have $\rho_{i+2}\tau_{-\a}=\tau_{-1}\rho_{i+2}$ for any $i \in \ZZ$.
		Therefore, it follows that
			$\sigma_{1} \circ (\rho_{i} \times \rho_{i+1})=(\rho_{i+1} \times \rho_{i+2}) \circ \sigma_{\a}$		for every $i \in \ZZ$. 
			By Lemma \ref{thm.GA} (2), 
			%\cite[Theorem 3.6]{MaS}, 
			we have $\GrMod A_{\a} \cong \GrMod A_1$. 
		
				\smallskip
		
		\noindent
		(3-2) (Type {\rm FL$_2$})\,\,		
		Let $B$ be a $3$-dimensional cubic AS-regular algebra of Type {\rm FL$_2$}.
		By Theorem \ref{thm.DR} and Proposition \ref{prop.P}, we can write
		\begin{align*}
			&B_{\b,\g}:=B=\cA(E,\sigma_{\b,\g})=\kxy/(yxy-\b x^3,\g xyx-y^3) \quad (\b \neq \g),\\
			&E=\PP^1 \times \{P\} \cup \PP^1 \times \{Q\} \cup \{P\} \times \PP^1 \cup \{Q\} \times \PP^1, 
			\begin{cases}
				\sigma_{\b,\g}(P,p)=(p,Q), \\
				\sigma_{\b,\g}(Q,p)=(p,P), \\
				\sigma_{\b,\g}(p,P)=(P,\mu_{\b}(p)), \\
				\sigma_{\b,\g}(p,Q)=(Q,\mu_{\g}(p)).
			\end{cases}
		\end{align*}
		%where $\b \neq \g$.
		We will show that $\GrMod A_{1} \cong \GrMod B_{1,-1}$. 
		%$A_1$ is graded Morita equivalent to $B_{1,-1}$.
		We define a sequence $\{\rho_i\}_{i \in \ZZ}$ of automorphisms of $\PP^1$; 
		%\begin{center}
			$\rho_i:=
			\begin{cases}
				\id, &\textnormal{if } i \equiv 0,1 \,\, (\mod 8), \\
				%\id \quad&\textnormal{if} \quad i \equiv 1 \,\,\, (\mod 8), \\
				\mu_{1}, &\textnormal{if } i \equiv 2,7 \,\, (\mod 8), \\
				\mu_{-1}, &\textnormal{if }  i \equiv 3,6  \,\,  (\mod 8), \\
				\tau_{-1}, &\textnormal{if }  i \equiv 4,5 \,\,  (\mod 8),  \\
				%\tau_{-1} \quad&\textnormal{if} \quad i \equiv 5 \,\,\, (\mod 8), \\
				%\mu_{-1} \quad&\textnormal{if} \quad i \equiv 6 \,\,\, (\mod 8), \\
				%\mu_{1} \quad&\textnormal{if} \quad i \equiv 7 \,\,\, (\mod 8).
			\end{cases}$
where $\mu_{\a}=
\begin{pmatrix}
0 & 1 \\
\a & 0
\end{pmatrix}
$
for $\a\in k\setminus \{0\}$. 
		%\end{center}
		By direct calculation, the diagram
		%\[
		{\small 
		$
		\xymatrix@C=40pt{
			E \ar@<0.5ex>[r]^{\rho_{i} \times \rho_{i+1}} \ar[d]_{\sigma_{1}} & E \ar[d]^{\sigma_{1,-1}} \\
			E \ar@<-0.5ex>[r]_{\rho_{i+1} \times \rho_{i+2}} & E
		}
		$}
		%\]
		commutes for every $i \in \ZZ$.
		By Theorem \ref{thm.GA} (2),
		%\cite[Theorem 3.6]{MaS},  
		%\begin{center}
			we have $\GrMod A_{1} \cong \GrMod B_{1,-1}$.
		%\end{center}	
			
		We will show that $\GrMod B_{\b,\g} \cong \GrMod B_{\b',\g'}$ if and only if
		%\begin{center}
			$(\b',\g')=(\b,\g), (\g,\b)$ in $\PP^1$.
		%\end{center}
		Assume that $\GrMod B_{\b,\g} \cong \GrMod B_{\b',\g'}$. 
		By Lemma \ref{thm.GA} (2), 
		%\cite[Theorem 3.5]{MaS}, 
		there exists a sequence $\{\rho_{i}\}_{i \in \ZZ}$ of automorphisms of $\PP^1$ such that
		$\rho_{i} \times \rho_{i+1}$ restricts to an automorphism of $E$ and
		%\[
		{\small 
		$
		\xymatrix@C=40pt{
			E \ar@<0.5ex>[r]^{\rho_{i} \times \rho_{i+1}} \ar[d]_{\sigma_{\b,\g}} & E \ar[d]^{\sigma_{\b',\g'}} \\
			E \ar@<-0.5ex>[r]_{\rho_{i+1} \times \rho_{i+2}} & E
		}
		$}
		%\]
		commutes for every $i \in \ZZ$.		
		If $\rho_{i}=
		\begin{pmatrix}
			1 & 0 \\
			0 & d_i
		\end{pmatrix}, \rho_{i+1}=
		\begin{pmatrix}
			1 & 0 \\
			0 & d_{i+1}
		\end{pmatrix}, \rho_{i+2}=
		\begin{pmatrix}
			1 & 0 \\
			0 & d_{i+2}
		\end{pmatrix}$, then 
		\vspace{-10pt}
		\begin{align*}
			&\begin{cases}
				(\sigma_{\b',\g'} \circ (\rho_{i} \times \rho_{i+1}))(P,p)=(\rho_{i+1}(p),Q), \\
				(\sigma_{\b',\g'} \circ (\rho_{i} \times \rho_{i+1}))(Q,p)=(\rho_{i+1}(p),P), \\
				(\sigma_{\b',\g'} \circ (\rho_{i} \times \rho_{i+1}))(p,P)=(P,\mu_{\b'}\rho_{i}(p)), \\
				(\sigma_{\b',\g'} \circ (\rho_{i} \times \rho_{i+1}))(p,Q)=(Q,\mu_{\g'}\rho_{i}(p)),
			\end{cases}
			\\ &
			\begin{cases}
				((\rho_{i+1} \times \rho_{i+2}) \circ \sigma_{\b,\g})(P,p)=(\rho_{i+1}(p),Q), \\
				((\rho_{i+1} \times \rho_{i+2}) \circ \sigma_{\b,\g})(Q,p)=(\rho_{i+1}(p),P), \\
				((\rho_{i+1} \times \rho_{i+2}) \circ \sigma_{\b,\g})(p,P)=(P, \rho_{i+2}\mu_{\b}(p)), \\
				((\rho_{i+1} \times \rho_{i+2}) \circ \sigma_{\b,\g})(p,Q)=(Q,\rho_{i+2}\mu_{\g}(p)).
			\end{cases}
			%\vspace{-10pt}
		\end{align*}
		In this case, %\vspace{-10pt}
				\begin{align*}
			&\sigma_{\b',\g'} \circ (\rho_{i} \times \rho_{i+1})=(\rho_{i+1} \times \rho_{i+2}) \circ \sigma_{\b,\g} 
			\Longleftrightarrow 
			\,\,\,\mu_{\b'}\rho_{i}=\rho_{i+2}\mu_{\b}, \quad \mu_{\g'}\rho_{i}=\rho_{i+2}\mu_{\g} \\
			&\Longrightarrow \,\,\, \frac{\b'}{\b}=\frac{\g'}{\g} 
			\Longleftrightarrow \,\,\, (\b',\g')=(\b,\g) \,\,\, \textnormal{in}\,\,\, \PP^1.
		%\vspace{-10pt}
		\end{align*}
		If $\rho_{i}=
		\begin{pmatrix}
			1 & 0 \\
			0 & d_i
		\end{pmatrix}, \rho_{i+1}=
		\begin{pmatrix}
			0 & 1 \\
			c_{i+1} & 0
		\end{pmatrix} \text{ and }\rho_{i+2}=
		\begin{pmatrix}
			1 & 0 \\
			0 & d_{i+2}
		\end{pmatrix}$, then
		\begin{align*}
			&
\begin{cases}
				(\sigma_{\b',\g'} \circ (\rho_{i} \times \rho_{i+1}))(P,p)=(\rho_{i+1}(p),Q), \\
				(\sigma_{\b',\g'} \circ (\rho_{i} \times \rho_{i+1}))(Q,p)=(\rho_{i+1}(p),P), \\
				(\sigma_{\b',\g'} \circ (\rho_{i} \times \rho_{i+1}))(p,P)=(Q,\mu_{\g'}\rho_{i}(p)), \\
				(\sigma_{\b',\g'} \circ (\rho_{i} \times \rho_{i+1}))(p,Q)=(P,\mu_{\b'}\rho_{i}(p)),
			\end{cases}
%\end{align*}
%\begin{align*}
			\\ &
			\begin{cases}
				((\rho_{i+1} \times \rho_{i+2}) \circ \sigma_{\b,\g})(P,p)=(\rho_{i+1}(p),Q), \\
				((\rho_{i+1} \times \rho_{i+2}) \circ \sigma_{\b,\g})(Q,p)=(\rho_{i+1}(p),P), \\
				((\rho_{i+1} \times \rho_{i+2}) \circ \sigma_{\b,\g})(p,P)=(Q, \rho_{i+2}\mu_{\b}(p)), \\
				((\rho_{i+1} \times \rho_{i+2}) \circ \sigma_{\b,\g})(p,Q)=(P,\rho_{i+2}\mu_{\g}(p)).
			\end{cases}
		\end{align*}
		In this case,
		\begin{align*}
			&\sigma_{\b',\g'} \circ (\rho_{i} \times \rho_{i+1})=(\rho_{i+1} \times \rho_{i+2}) \circ \sigma_{\b,\g}
			\Longleftrightarrow \,\,\,\mu_{\g'}\rho_{i}=\rho_{i+2}\mu_{\b}, \quad \mu_{\b'}\rho_{i}=\rho_{i+2}\mu_{\g} \\
			&\Longrightarrow \,\,\, \frac{\g'}{\b}=\frac{\b'}{\g}
			\Longleftrightarrow \,\,\, (\b',\g')=(\g,\b) \,\,\, \textnormal{in}\,\,\, \PP^1.
		\end{align*}
		
		If $\rho_{i}=
		\begin{pmatrix}
			0 & 1 \\
			c_i & 0
		\end{pmatrix}, \rho_{i+1}=
		\begin{pmatrix}
			1 & 0 \\
			0 & d_{i+1}
		\end{pmatrix}
		\text{ and }
		\rho_{i+2}=
		\begin{pmatrix}
			0 & 1 \\
			c_{i+2} & 0
		\end{pmatrix}$, then
		%\vspace{-5pt}
		\begin{align*}
			&\begin{cases}
				(\sigma_{\b',\g'} \circ (\rho_{i} \times \rho_{i+1}))(P,p)=(\rho_{i+1}(p),P), \\
				(\sigma_{\b',\g'} \circ (\rho_{i} \times \rho_{i+1}))(Q,p)=(\rho_{i+1}(p),Q), \\
				(\sigma_{\b',\g'} \circ (\rho_{i} \times \rho_{i+1}))(p,P)=(P,\mu_{\b'}\rho_{i}(p)), \\
				(\sigma_{\b',\g'} \circ (\rho_{i} \times \rho_{i+1}))(p,Q)=(Q,\mu_{\g'}\rho_{i}(p)),
			\end{cases}
			\\ & 
			\begin{cases}
				((\rho_{i+1} \times \rho_{i+2}) \circ \sigma_{\b,\g})(P,p)=(\rho_{i+1}(p),P), \\
				((\rho_{i+1} \times \rho_{i+2}) \circ \sigma_{\b,\g})(Q,p)=(\rho_{i+1}(p),Q), \\
				((\rho_{i+1} \times \rho_{i+2}) \circ \sigma_{\b,\g})(p,P)=(P, \rho_{i+2}\mu_{\b}(p)), \\
				((\rho_{i+1} \times \rho_{i+2}) \circ \sigma_{\b,\g})(p,Q)=(Q,\rho_{i+2}\mu_{\g}(p)).
			\end{cases}
			%\vspace{-10pt}
		\end{align*}
		In this case,
		\begin{align*}
			&\sigma_{\b',\g'} \circ (\rho_{i} \times \rho_{i+1})=(\rho_{i+1} \times \rho_{i+2}) \circ \sigma_{\b,\g}
			\Longleftrightarrow \,\,\,\mu_{\b'}\rho_{i}=\rho_{i+2}\mu_{\b}, \quad \mu_{\g'}\rho_{i}=\rho_{i+2}\mu_{\g} \\
			&\Longrightarrow \,\,\, \b'\b=\g'\g 
			\Longleftrightarrow \,\,\, (\b',\g')=(\g,\b) \,\,\, \textnormal{in}\,\,\, \PP^1.
		\end{align*}		
		If $\rho_{i}=
		\begin{pmatrix}
			0 & 1 \\
			c_i & 0
		\end{pmatrix}, \rho_{i+1}=
		\begin{pmatrix}
			0 & 1 \\
			c_{i+1} & 0
		\end{pmatrix}, \rho_{i+2}=
		\begin{pmatrix}
			0 & 1 \\
			c_{i+2} & 0
		\end{pmatrix}$, then 
		\vspace{-5pt}
		\begin{align*}
			&\begin{cases}
				(\sigma_{\b',\g'} \circ (\rho_{i} \times \rho_{i+1}))(P,p)=(\rho_{i+1}(p),P), \\
				(\sigma_{\b',\g'} \circ (\rho_{i} \times \rho_{i+1}))(Q,p)=(\rho_{i+1}(p),P), \\
				(\sigma_{\b',\g'} \circ (\rho_{i} \times \rho_{i+1}))(p,P)=(Q,\mu_{\g'}\rho_{i}(p)), \\
				(\sigma_{\b',\g'} \circ (\rho_{i} \times \rho_{i+1}))(p,Q)=(P,\mu_{\b'}\rho_{i}(p)),
			\end{cases}
			\\ & 
			\begin{cases}
				((\rho_{i+1} \times \rho_{i+2}) \circ \sigma_{\b,\g})(P,p)=(\rho_{i+1}(p),P), \\
				((\rho_{i+1} \times \rho_{i+2}) \circ \sigma_{\b,\g})(Q,p)=(\rho_{i+1}(p),Q), \\
				((\rho_{i+1} \times \rho_{i+2}) \circ \sigma_{\b,\g})(p,P)=(Q, \rho_{i+2}\mu_{\b}(p)), \\
				((\rho_{i+1} \times \rho_{i+2}) \circ \sigma_{\b,\g})(p,Q)=(P,\rho_{i+2}\mu_{\g}(p)).
			\end{cases}
			\vspace{-10pt}
		\end{align*}
		In this case,\vspace{-10pt}
		\begin{align*}
			&\sigma_{\b',\g'} \circ (\rho_{i} \times \rho_{i+1})=(\rho_{i+1} \times \rho_{i+2}) \circ \sigma_{\b,\g}
			\Longleftrightarrow \,\,\,\mu_{\g'}\rho_{i}=\rho_{i+2}\mu_{\b}, \quad \mu_{\b'}\rho_{i}=\rho_{i+2}\mu_{\g} \\
			&\Longrightarrow \,\,\, \g'\b=\b'\g 
			\Longleftrightarrow \,\,\, (\b',\g')=(\b,\g) \,\,\, \textnormal{in}\,\,\, \PP^1.
			\vspace{-10pt}
		\end{align*}
		
		Conversely, if $(\b',\g')=(\b,\g)$ in $\PP^1$, then it is clear that $B_{\b',\g'}$ is graded Morita equivalent to $\GrMod B_{\b,\g}$
		because $B_{\b',\g'}$ is isomorphic to $B_{\b,\g}$ by Theorem \ref{thm.isom},
		so suppose that 
		%\begin{center}
			$(\b',\g')=(\g,\b)$ in $\PP^1$.
		%\end{center}
		Then we define a sequence $\{\rho_i\}_{i \in \ZZ}$; 
			$\rho_i:=
			\begin{cases}
				\begin{pmatrix}
					0 & 1 \\
					1 & 0
				\end{pmatrix} 
				&\textnormal{ if }i \equiv 0 \,\,\, (\mod 4), \\
				%\id \quad&\textnormal{if} \quad i \equiv 1 \,\,\, (\mod 4), \\
				\begin{pmatrix}
					0 & 1 \\
					\b'\b & 0
				\end{pmatrix}
				&\textnormal{ if } i \equiv 2 \,\,\, (\mod 4), \\
				\id &\textnormal{ if } i \equiv 1,3  \,\,\, (\mod 4).
			\end{cases}$
		By direct calculation, it holds that
		$\rho_{i} \times \rho_{i+1}$ restricts to an automorphism of $E$ and
		%\[
		{\small 
		$
		\xymatrix@C=40pt{
			E \ar@<0.5ex>[r]^{\rho_{i} \times \rho_{i+1}} \ar[d]_{\sigma_{\b,\g}} & E \ar[d]^{\sigma_{\b',\g'}} \\
			E \ar@<-0.5ex>[r]_{\rho_{i+1} \times \rho_{i+2}} & E
		}
		$
		}
		%\]
		commutes for every $i \in \ZZ$. 
		%Therefore, 
		By Lemma \ref{thm.GA} (2),
		%\cite[Theorem 3.6]{MaS}, 
		%\begin{center}
			we have $\GrMod B_{\b',\g'} \cong \GrMod B_{\b,\g}$.
		%\end{center}
%	\end{enumerate}
\end{proof}
%%%%%%%%%%%%%%%%%%%%%%%%%%%%%%%%%%%%%%%%%%%%%%%%%%%%%%%%%%%%%%%%%%%%%%%%%%%%%%%%%%%%%%%%%%%%%%%%%%%%%%%%%%%%%%%%%%%%%%%%%%%%%%%%%%%%%%%%%%
%\newpage
\subsection{Summary}\label{subsec.SUM}
In conclusion, 
%by the results of \cite{MaS} and Main Theorem in Introduction 
%\ref{thm},
%(Theorems \ref{thm.isom}, \ref{thm.grmod}), 
we give the complete list of defining relations of $3$-dimensional cubic AS-regular algebras whose point schemes are not integral. 
Moreover, we classify them up to isomorphisms of graded algebras and graded Morita equivalence in terms of their defining relations. 
Finally, 
%For the cases that point schemes are not integral, 
%reducible, 
we summarize the results in the tables as follows: 
%%%%%%%%
\begin{center}
		\noindent{\renewcommand\arraystretch{1.5}
			{\small
			%\scriptsize
			%\footnotesize
				\begin{longtable}{|p{0.8cm}|p{6cm}|p{5.1cm}|}
					\multicolumn{3}{c}{{\rm Table: ISOM}}
					\\ \hline
					%%%%%%%%%%%%%%%%%%%%%%%%%%%%%%%%%%%%%%%%%%%%%%%%%%%%%%%%%%%%%%%%%%%%%%%%%%%%%%%
					{\rm Type}
					& {\rm (I) Defining relations \quad$(\a, \b \in k)$}
					& {\rm (II) Conditions to be graded algebra isomorphic}
					\\ \hline\hline
					%%%%%%%%%%%%%%%%%%%%%%%%%%%%%%%%%%%%%%%%%%%%%%%%%%%%%%P
					{\rm P$_{1}$} &
					$\begin{cases}
						x^{2}y-\a yx^{2}, \\
						xy^{2}-\a y^{2}x \quad (\a\neq 0)
					\end{cases}$ &
					$\a'=\a^{\pm 1}$
					\\ \hline
					%%%%%%%%%%%%%%%%%%%%%%%%%%%%%%%%%%%%%%%%%%%%%%%%%%%%%%P
					{\rm P$_{2}$} &
					$\begin{cases}
						x^{2}y-yx^{2}+yxy, \\
						xy^{2}-y^{2}x+y^{3}
					\end{cases}$
					&
					\ \hfill \textnormal{---------------------} \hfill \rule{0pt}{10pt}
					\\ \hline
					%%%%%%%%%%%%%%%%%%%%%%%%%%%%%%%%%%%%%%%%%%%%%%%%%%%%%%S
					{\rm S$_{1}$} &
					$\begin{cases}
						\a\b x^{2}y+(\a+\b)xyx+yx^{2},\\
						\a\b xy^{2}+(\a+\b)yxy+y^{2}x
					\end{cases}$
					
					$(\a\b\neq 0,\,{\a}^{2}\neq {\b}^{2})$
					&
                    $\{\a',\b'\}$ $=\{\a,\b \},\,\{\a^{-1},\b^{-1}\}$
					\\ \hline
					%%%%%%%%%%%%%%%%%%%%%%%%%%%%%%%%%%%%%%%%%%%%%%%%%%%%%%S
					{\rm S$_{2}$} &
					$\begin{cases}
						xy^{2}+y^{2}x+(\a+\b)x^{3}, \\
						x^{2}y+yx^{2}+(\a^{-1}+\b^{-1})y^{3}
						\end{cases}$ 
						
						$(\a\b\neq0,\,{\a}^{2}\neq {\b}^{2})$
						&
                        $\dfrac{\a'}{\b'}=\left(\dfrac{\a}{\b}\right)^{\pm}$
					\\ \hline
					%%%%%%%%%%%%%%%%%%%%%%%%%%%%%%%%%%%%%%%%%%%%%%%%%%%%%%T
					{\rm T$_{1}$} &
					$\begin{cases}
						x^{2}y-2xyx+yx^{2}-2(2\b-1)yxy\\
						\quad +2(2\b-1)xy^{2}+2\b(\b-1)y^{3},\\
						xy^{2}-2yxy+y^{2}x
					\end{cases}$ &
                    $\b'=\b,\,-\b$
					\\ \hline
					%%%%%%%%%%%%%%%%%%%%%%%%%%%%%%%%%%%%%%%%%%%%%%%%%%%%%%T
					{\rm T$_{2}$} &
					$\begin{cases}
						x^{2}y+2xyx+yx^{2}+2y^{3}, \\
						xy^{2}+2yxy+y^{2}x
					\end{cases}$ &
					\ \hfill \textnormal{---------------------} \hfill \rule{0pt}{10pt}
					\\ \hline
					%%%%%%%%%%%%%%%%%%%%%%%%%%%%%%%%%%%%%%%%%%%%%%%%%%%%%%%%%%%%%%%%%%%%%%%%%%%%
					{\rm S$'$} &
					$\begin{cases}
						xy^2-y^2x, \\
						x^2y+yx^2-2y^3
					\end{cases}$ &
					\ \hfill \textnormal{---------------------} \hfill \rule{0pt}{10pt}
					\\ \hline
					%%%%%%%%%%%%%%%%%%%%%%%%%%%%%%%%%%%%%%%%%%%%%%%%%%%%%%%%%%%%%%%%%%%%%%%%%%%%%%%%%%%
					{\rm T$'_1$} &
					$\begin{cases}
						xy^2-y^2x, \\
						x^2y-yx^2+yxy-xy^2
					\end{cases}$ &
					\ \hfill \textnormal{---------------------} \hfill \rule{0pt}{10pt}
					\\ \hline
					%%%%%%%%%%%%%%%%%%%%%%%%%%%%%%%%%%%%%%%%%%%%%%%%%%%%%%%%%%%%%%%%%%%%%%%%%%%%%%%%%%%%
					{\rm T$'_2$} &
					$\begin{cases}
						xy^2-y^2x+2y^3, \\
						x^2y-yx^2 \\
						-\a xy^2+\a yxy+2y^2x-(\a+2)y^3
					\end{cases}$ &
					$\a'=\a$
					\\ \hline
					%%%%%%%%%%%%%%%%%%%%%%%%%%%%%%%%%%%%%%%%%%%%%%%%%%%%%%%%%%%%%%%%%%%%%%%%%%%%%%%%%%%%
					{\rm FL$_1$} &
					$\begin{cases}
						xy^2+\a y^2x, \\
						x^2y-\a yx^2
					\end{cases}$ &
					$\a'=\a, -\a^{-1}$
					\\ \hline
					%%%%%%%%%%%%%%%%%%%%%%%%%%%%%%%%%%%%%%%%%%%%%%%%%%%%%%%%%%%%%%%%%%%%%%%%%%%%%%%%%%%%%
					{\rm FL$_2$} &
					$\begin{cases}
						-\a x^3+yxy, \\
						\b xyx-y^3
					\end{cases}$ &
					$(\a',\b')=(\a,\b)$ \textnormal{in} $\PP^1$
					\\ \hline
					%%%%%%%%%%%%%%%%%%%%%%%%%%%%%%%%%%%%%%%%%%%%%%%%%%%%%%%%%%%%%%%%%%%%%%%%%%%%%%%%%%%%%
					{\rm TWL} &
					$\begin{cases}
						xy^2+y^2x, \\
						x^2y+yx^2+y^3
					\end{cases}$ &
					\ \hfill \textnormal{---------------------} \hfill \rule{0pt}{10pt}
					\\ \hline
					%%%%%%%%%%%%%%%%%%%%%%%%%%%%%%%%%%%%%%%%%%%%%%%%%%%%%%%%%%%%%%%%%%%%%%%%%%%%%%%%%%%%%
					{\rm WL$_1$} &
					$\begin{cases}
						\a^2xy^2+y^2x-2\a yxy, \\
						\a^2x^2y+yx^2-2\a xyx
					\end{cases}$ &
					$\a'=\a^{\pm 1}$
					\\ \hline
					%%%%%%%%%%%%%%%%%%%%%%%%%%%%%%%%%%%%%%%%%%%%%%%%%%%%%%%%%%%%%%%%%%%%%%%%%%%%%%%%%%%%%%
					{\rm WL$_2$} &
					$\begin{cases}
						xy^2+y^2x-2yxy, \\
						x^2y+yx^2-2xyx+4xy^2-4yxy
						\\ \hfill +2y^3
					\end{cases}$ &
					\ \hfill \textnormal{---------------------} \hfill \rule{0pt}{10pt}
					\\ \hline
					%%%%%%%%%%%%%%%%%%%%%%%%%%%%%%%%%%%%%%%%%%%%%%%%%%%%%%%%%%%%%%%%%%%%%%%%%%%%%%%%%%%%%%%
				\end{longtable}
			}
		}
	\end{center}
%	
%	\newpage
%%%
	\begin{center}
		\noindent{\renewcommand\arraystretch{1.5}
			{\small%\scriptsize%\footnotesize
				\begin{longtable}{|p{0.8cm}|p{6cm}|p{5.1cm}|}
					\multicolumn{3}{c}{{\rm Table: GME}}
					\\ \hline
					%%%%%%%%%%%%%%%%%%%%%%%%%%%%%%%%%%%%%%%%%%%%%%%%%%%%%%%%%%%%%%%%%%%%%%%%%%%%%%%
					{\rm Type} & {\rm (I) Defining relations \quad$(\a, \b \in k)$}
					& {\rm (III) Conditions to be graded Morita equivalent} 
					\\ \hline\hline
					%%%%%%%%%%%%%%%%%%%%%%%%%%%%%%%%%%%%%%%%%%%%%%%%%%%%%%P
					{\rm P} &
					$\begin{cases}
						x^{2}y-yx^{2}, \\
						xy^{2}-y^{2}x
					\end{cases}$ &
					\ \hfill \textnormal{---------------------} \hfill \rule{0pt}{10pt}
					\\ \hline
					%%%%%%%%%%%%%%%%%%%%%%%%%%%%%%%%%%%%%%%%%%%%%%%%%%%%%%P
					{\rm S} &
					$\begin{cases}
						\a\b x^{2}y+(\a+\b)xyx+yx^{2},\\
						\a\b xy^{2}+(\a+\b)yxy+y^{2}x
					\end{cases}$
					
					$(\a\b\neq 0,\,{\a}^{2}\neq {\b}^{2})$
					&
                    $\dfrac{\a'}{\b'}=\left(\dfrac{\a}{\b}\right)^{\pm}$
					\\ \hline
					%%%%%%%%%%%%%%%%%%%%%%%%%%%%%%%%%%%%%%%%%%%%%%%%%%%%%%P
					{\rm T} &
					$\begin{cases}
						x^{2}y-2xyx+yx^{2}-2yxy+2xy^{2},\\
						xy^{2}-2yxy+y^{2}x
					\end{cases}$ &
					\ \hfill \textnormal{---------------------} \hfill \rule{0pt}{10pt}
					\\ \hline
					%%%%%%%%%%%%%%%%%%%%%%%%%%%%%%%%%%%%%%%%%%%%%%%%%%%%%%P
					{\rm S$'$} &
					$\begin{cases}
						xy^2-y^2x, \\
						x^2y+yx^2-2y^3
					\end{cases}$ &
					\ \hfill \textnormal{---------------------} \hfill \rule{0pt}{10pt}
					\\ \hline
					%%%%%%%%%%%%%%%%%%%%%%%%%%%%%%%%%%%%%%%%%%%%%%%%%%%%%%%%%%%%%%%%%%%%%%%%%%%%%%%%%%%%%%%%
					{\rm T$'$} &
					$\begin{cases}
						xy^2-y^2x, \\
						x^2y-yx^2+yxy-xy^2
					\end{cases}$ &
					\ \hfill \textnormal{---------------------} \hfill \rule{0pt}{10pt}
					\\ \hline
					%%%%%%%%%%%%%%%%%%%%%%%%%%%%%%%%%%%%%%%%%%%%%%%%%%%%%%%%%%%%%%%%%%%%%%%%%%%%%%%%%%%%
					{\rm FL} &
					$\begin{cases}
						-\a x^3+yxy, \\
						\b xyx-y^3
					\end{cases}$ &
					$(\a',\b')=(\a,\b), (\b,\a)$ \textnormal{in} $\PP^1$
					\\ \hline
					%%%%%%%%%%%%%%%%%%%%%%%%%%%%%%%%%%%%%%%%%%%%%%%%%%%%%%%%%%%%%%%%%%%%%%%%%%%%%%%%%%%%%%
					{\rm TWL} &
					$\begin{cases}
						xy^2+y^2x, \\
						x^2y+yx^2+y^3
					\end{cases}$ &
					\ \hfill \textnormal{---------------------} \hfill \rule{0pt}{10pt}
					\\ \hline
					%%%%%%%%%%%%%%%%%%%%%%%%%%%%%%%%%%%%%%%%%%%%%%%%%%%%%%%%%%%%%%%%%%%%%%%%%%%%%%%%%%%%%
					{\rm WL} &
					$\begin{cases}
						xy^2+y^2x-2yxy, \\
						x^2y+yx^2-2xyx
					\end{cases}$ &
					\ \hfill \textnormal{---------------------} \hfill \rule{0pt}{10pt}
					\\ \hline
%					%%%%%%%%%%%%%%%%%%%%%%%%%%%%%%%%%%%%%%%%%%%%%%%%%%%%%%%%%%%%%%%%%%%%%%%%%%%%%%%%%%%%%%
				\end{longtable}
			}
		}
	\end{center}	

%%%%%%%%%%%%%%%%%%%%%%%%%%%%%%%%%%%%%%%%%%%%%%%%%%%%%%%%%%%%%%%%%%%%%%%%%%%%%%%%%%%%%%%%%%%%%%%%%
\section*{Acknowledgments}
We are grateful to the third author's Ph.D. supervisor, Professor Izuru Mori,  
for his support and helpful comments.
%They also thank the referee for useful comments in improving the paper.
The first author was supported by JSPS Grant-in-Aid for Scientific Research (C) 24K06653. 
%%%%%%%%%%%%%%%%%%%%%%%%%

%%%%%%%%%%%%%%%%%%%%%%%%%%%%%%%%%%%%%%%%%%%%%%%%%%%%%%%%%%%%%%%%%%%%%%%%%%%%%%%%%%%%%%%%%%%%%%%%%%%%%%%%%%%%%%%%%%%%%%%%%%%%%%%%%%%%%%%%%%%%%%%%%%%%%%%%%%%%%%%%%%%%%%%%%%%%%%%%%%%%%%%%%%%%%%%%%%			
\end{document}